\documentclass[11pt,reqno,twoside]{amsart}
\numberwithin{equation}{section}

\usepackage{xypic} 
\usepackage{amssymb}
\usepackage{mathrsfs}
\usepackage{hyperref}
\usepackage{comment}
\xyoption{curve}

\hypersetup{ 
colorlinks,
citecolor=black,
filecolor=blue,
linkcolor=black,
urlcolor=blue
}

\newtheorem{lemma}{Lemma}[section]

\newtheorem{proposition}[lemma]{Proposition}
\newtheorem{theorem}[lemma]{Theorem}
\newtheorem{claim}[lemma]{Claim}
\newtheorem{corollary}[lemma]{Corollary}

\theoremstyle{definition}

\newtheorem{definition}[lemma]{Definition}
\newtheorem{remark}[lemma]{Remark}

\newtheorem{notation}[lemma]{Notation}

\newcommand{\ba}{\begin{array}}
\newcommand{\ea}{\end{array}}
\newcommand{\Hom}{\mathrm{Hom}}

\newcommand{\Ext}{\mathrm{Ext}}

\newcommand{\End}{\mathrm{End}}
\newcommand{\ox}{\otimes}

\newcommand{\xym}{\xymatrix}
\newcommand{\uHom}{\underline{\Hom}}
\newcommand{\G}{E}

\newcommand{\Wedge}{\textstyle\bigwedge}
\newcommand{\unl}{\underline}

\bibliographystyle{plain}

\title[Braided Hochschild cohomology]{Braided Hochschild cohomology and Hopf actions}
\date{}

\author{Cris Negron}
\address{Department of Mathematics\\Louisiana State University\\
Baton Rouge, LA 70803, USA}
\email{cnegron@lsu.edu}
\thanks{This work was supported by NSF Postdoctoral fellowship DMS-1503147}


\begin{document}
\maketitle

\begin{abstract}
We show that the braided Hochschild cohomology, of an algebra in a suitably algebraic braided monoidal category, admits a graded ring structure under which it is braided commutative.  We then give a canonical identification between the usual Hochschild cohomology ring of a smash product and the (derived) invariants of its braided Hochschild cohomology ring.  We apply our results to identify the associative formal deformation theory of a smash product with its formal deformation theory as a module algebra over the given Hopf algebra (when the Hopf algebra is sufficiently semisimple).  As a second application we deduce some structural results for the usual Hochschild cohomology of a smash product, and discuss specific implications for finite group actions on smooth affine schemes.
\end{abstract}

\section{Introduction}

Let $k$ be a field and $\mathscr{Z}$ be the category of modules over a {\it quasitriangular} Hopf algebra $H$ (see \ref{sect:quasitriangle}).  This gives $\mathscr{Z}$ the structure of a braided monoidal category with a monoidal embedding into the category of vector spaces.  Our motivating example is the category $YD^E_E$ of Yetter-Drinfeld modules over a finite dimensional Hopf algebra $E$.
\par

An algebra in $\mathscr{Z}$ will be an object $B$ in $\mathscr{Z}$ equipped with a compatible algebra structure.  That is, we require all the structure maps
\[
\mathrm{unit}:k\to B\ \ \text{and}\ \ \mathrm{mult}:B\ox B\to B
\]
to be maps in $\mathscr{Z}$.  This is equivalent to $B$ being a $H$-module algebra when we specify $\mathscr{Z}=H\mathrm{mod}$.  Our primary example of such an algebra is a smash product $B=A\ast E$, where $E$ is a finite dimensional Hopf algebra acting on $A$.  The smash product is an algebra in the category of Yetter-Drinfeld modules over $E$ under the right adjoint action (see \ref{sect:smashprud}).  
\par

The work of the present paper is motivated, in part, by a desire to understand the nature of the Hochschild cohomology of $A$ with coefficients in the smash product $HH^\bullet(A,A\ast E)$.  This cohomology is, in a sense, more fundamental than the usual Hochschild cohomology $HH^\bullet(A\ast E)$.  This can be seen in many works on Hochschild, cyclic, and orbifold cohomology (e.g. ~\cite{baranovsky03,dolgushevetingof05,SW,SW2,halbouttang10,pflaumetal11,brodzkietal15}).  As is explained below, we find that the cohomology $HH^\bullet(A,A\ast G)$ is actually the {\it braided} Hochschild cohomology of the smash product, with coefficients in itself.  This phenomenon was first noticed by Schedler and Witherspoon in a more restrictive setting~\cite{schedlerwitherspoon}.

In~\cite{baez94} Baez considered the braided Hochschild homology of such an algebra $B$.  He defined this homology as the usual Tor group of the corresponding {\it braided enveloping algebra} $B^{\unl{e}}$,
\[
H^c_\bullet(B)=\mathrm{Tor}^{\mathrm{B}^{\unl{e}}}_\bullet(B,B).
\]
Here $c$ denotes the braiding transformation on $\mathscr{Z}$.  (One can see Section \ref{sect:braidedhh} for a precise definition of the braided enveloping algebra.)  Baez related this homology to $c$-commutative differential forms on $B$, when $B$ is braided commutative.  Baez's results are referential to the usual Hochschild homology of a commutative algebra, in which case one finds Kh\"aler differentials in degree $1$.
\par

In this work we consider the braided Hochschild {\it co}homology, which can be defined succinctly as the group of self extensions of $B$ over its braided enveloping algebra.  We follow Baez's notation and take
\[
H^\bullet_c(B):=\Ext^\bullet_{B^{\unl{e}}}(B,B).
\]
We also consider a relative version of the cohomology $H^\bullet_{c,E}(B)$, which is defined for any subalgebra $E\subset B$ in $\mathscr{Z}$.
\par

This work is concerned primarily with a graded ring structure on braided Hochschild cohomology and its relation(s) to the non-braided Hochschild cohomology of smash products, although there are also interactions between the braided Hochschild cohomology and the Gerstenhaber bracket~\cite[Ch. 3]{negronthesis}.

\begin{theorem}[=\ref{thm:braided_comm}]\label{thm:3}
Let $B$ be an algebra in a braided monoidal category $\mathscr{Z}$ as above.
\begin{enumerate}
\item Each cohomology group $H^i_c(B)$ is itself an object in $\mathscr{Z}$.
\item The cohomology $H^\bullet_c(B)$ admits a graded ring structure under which it is braided commutative.
\end{enumerate}
The two above statements also hold for any relative cohomology $H^\bullet_{c,E}(B)$.
\end{theorem}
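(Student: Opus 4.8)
The plan is to compute $H^\bullet_c(B)=\Ext^\bullet_{B^{\unl{e}}}(B,B)$ by means of an explicit, entirely ``$\mathscr{Z}$-internal'' resolution: the braided bar resolution $P_\bullet\to B$, with $P_n=B^{\unl{e}}\otimes B^{\otimes n}$ a free $B^{\unl{e}}$-module and with differential, augmentation, and contracting homotopy all assembled from the unit and multiplication of $B$ together with the braiding $c$ of $\mathscr{Z}$. Forgetting the ambient $\mathscr{Z}$-structure, $P_\bullet$ is a projective resolution of $B$ over the ordinary algebra $B^{\unl{e}}$, so it computes $\Ext^\bullet_{B^{\unl{e}}}(B,B)$. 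For part (1) I would first recall that $\mathscr{Z}$ is closed monoidal, the internal hom $\uHom_k(X,Y)$ being $\Hom_k(X,Y)$ with the adjoint $H$-action, and then note that for $B^{\unl{e}}$-modules $M,N$ in $\mathscr{Z}$ the object $\uHom_{B^{\unl{e}}}(M,N)$ already lives in $\mathscr{Z}$: it is the equalizer in $\mathscr{Z}$ of the two morphisms $\uHom_k(M,N)\rightrightarrows\uHom_k(B^{\unl{e}}\otimes M,N)$ obtained by composing with the $B^{\unl{e}}$-action on $M$ and on $N$ respectively, and its underlying vector space is the ordinary $\Hom_{B^{\unl{e}}}(M,N)$. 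Applying $\uHom_{B^{\unl{e}}}(-,B)$ to $P_\bullet$ therefore yields a cochain complex $C^\bullet_c(B)=\bigoplus_n\uHom_k(B^{\otimes n},B)$ of objects of $\mathscr{Z}$, with differentials in $\mathscr{Z}$, whose underlying complex computes $\Ext^\bullet_{B^{\unl{e}}}(B,B)$; hence each $H^i_c(B)$ is a subquotient, so an object, of $\mathscr{Z}$.

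For the ring structure in part (2), I would define the braided cup product on $C^\bullet_c(B)$ directly by $f\cup g:=\mathrm{mult}_B\circ(f\otimes g)$, using the identification $B^{\otimes(p+q)}=B^{\otimes p}\otimes B^{\otimes q}$; this is visibly a morphism in $\mathscr{Z}$, and one checks that it is associative and a chain map for the braided Hochschild differential, the computation running parallel to the classical one but invoking the braiding $c$ wherever tensor factors are reordered. Consequently $H^\bullet_c(B)$ acquires an associative graded ring structure whose multiplication $H^i_c(B)\otimes H^j_c(B)\to H^{i+j}_c(B)$ is a morphism in $\mathscr{Z}$, and one verifies in the usual way that $\cup$ agrees with the Yoneda composition product on $\Ext^\bullet_{B^{\unl{e}}}(B,B)$, so the structure is canonical. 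Dually, $\cup$ arises from a coaugmented differential graded coalgebra structure on $P_\bullet$ in the monoidal category $\mathscr{M}$ of $B^{\unl{e}}$-modules in $\mathscr{Z}$ (equivalently, $B$-bimodules in $\mathscr{Z}$), with unit object $B$ and with comultiplication the braided deconcatenation $P_n\to\bigoplus_{p+q=n}P_p\otimes_B P_q$.

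The substance of the theorem is braided commutativity, and the crucial complication is that $(\mathscr{M},\otimes_B)$ is \emph{not} braided, so one cannot argue purely formally. My plan is a braided Eckmann--Hilton argument at the chain level, powered by the comparison theorem. Since $B$ is the unit of $\otimes_B$ and $P_\bullet\to B$ is a quasi-isomorphism (all terms being flat over $B$ on both sides), $P_\bullet\otimes_B P_\bullet$ is again a projective resolution of $B\otimes_B B=B$; hence any two $B^{\unl{e}}$-linear chain maps $P_\bullet\to P_\bullet\otimes_B P_\bullet$ lifting $\mathrm{id}_B$ are chain homotopic, as are any two chain maps in the opposite direction lifting $\mathrm{id}_B$. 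I would then construct by hand a ``braided transposition'' $\tau\colon P_\bullet\otimes_B P_\bullet\to P_\bullet\otimes_B P_\bullet$, realized on $B^{\unl{e}}\otimes B^{\otimes p}\otimes B^{\otimes q}$ by braiding the first block of $p$ tensorands past the second block of $q$ tensorands (carrying the Koszul sign of the homological shift), verify that $\tau$ is a morphism of complexes lifting $\mathrm{id}_B$, and deduce that $\tau\circ\nabla\simeq\nabla$ and $\mathrm{mult}_B\circ\tau\simeq\mathrm{mult}_B$. Feeding these homotopies, together with the (braided) naturality of $\tau$, into the cup-product formula gives, for $\alpha\in H^p_c(B)$ and $\beta\in H^q_c(B)$, the identity $\alpha\cup\beta=(-1)^{pq}\,\mathrm{mult}\circ c_{H^p_c(B),H^q_c(B)}(\alpha\otimes\beta)$, which is exactly braided commutativity of $H^\bullet_c(B)$ in the graded braided category underlying $\mathscr{Z}$. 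The main obstacle I foresee is precisely the construction and chain-map verification of $\tau$: since the transposition is not available abstractly, it must be written out on the braided bar complex and checked against the (braiding-laden) differential, with the hexagon axioms for $c$ doing the bookkeeping that the Koszul signs do classically; an alternative, more computational but entirely parallel to Gerstenhaber's original proof, is to introduce a braided circle (brace) product $\circ$ on $C^\bullet_c(B)$ and establish directly the cochain identity exhibiting the braided commutator $f\cup g-(-1)^{pq}\,\mathrm{mult}_B\circ(g\otimes f)\circ c$ as a coboundary $d(f\circ g)\pm(df)\circ g\pm f\circ(dg)$. Finally, every step applies verbatim to the relative cohomology $H^\bullet_{c,E}(B)$ once $P_\bullet$ is replaced by the $E$-relative braided bar resolution $B^{\unl{e}}\otimes_{E^{\unl{e}}}B^{\otimes_E\bullet}$ and $\otimes$ by $\otimes_E$ where appropriate: the ``suitably algebraic'' hypotheses on $\mathscr{Z}$ and on $E\subset B$ are what guarantee that $-\otimes_E-$ is exact on the modules in play and that $P^E_\bullet\otimes_E P^E_\bullet$ remains a resolution, so that the cup-product and Eckmann--Hilton arguments carry over without change.
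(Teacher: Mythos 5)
Your treatment of part (1) and of the graded ring structure is essentially the paper's: the braided bar resolution with its free $B^{\unl{e}}$-module structure, the identification of $\Hom_{\mathrm{mod}B^{\unl{e}}}(B\otimes B^{\otimes n}\otimes B,B)$ with $\uHom(B^{\otimes n},B)$ by restriction to generators, the observation that module homs between objects whose action maps are morphisms in $\mathscr{Z}$ form an $H$-submodule of the inner hom (your equalizer formulation is a clean repackaging of the paper's Lemma \ref{lem:309}), and the braided convolution product on $\uHom(\mathscr{B}B,B)$. Two caveats: the formula $f\cup g=\mathrm{mult}_B\circ(f\otimes g)$ is only correct if $f\otimes g$ denotes the image under the canonical, braiding-laden map $\uHom(B^{\otimes p},B)\otimes\uHom(B^{\otimes q},B)\to\uHom(B^{\otimes p+q},B\otimes B)$ --- the naive $k$-linear tensor of maps gives a product that is neither $H$-linear nor a chain map for $d_c$ --- and the agreement of this cup product with the Yoneda product is only conjectured in the paper (Remark following Corollary \ref{cor:cocycprod}), so it should not be used as a crutch.

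The genuine gap is in your primary route to braided commutativity. The proposed transposition $\tau$ on $P_\bullet\otimes_B P_\bullet$, braiding the first block of $p$ tensorands past the second block of $q$, is not a map of complexes of $B^{\unl{e}}$-modules: the bar differential contains the terms multiplying the extremal entries of each block into the adjacent copies of $B$ (in particular into the middle copy shared across $\otimes_B$), and after the block swap these land in the wrong tensor factors. This is the same obstruction that makes classical commutativity of $HH^\bullet$ a non-formal fact: the monoidal category of $B$-bimodules under $\otimes_B$ admits no symmetry, and the braiding of $\mathscr{Z}$ does not repair this, as you yourself observe. Moreover, even granting the homotopies $\tau\circ\nabla\simeq\nabla$ and $\mathrm{mult}\circ\tau\simeq\mathrm{mult}$, an Eckmann--Hilton interchange carried out with honest ($H$-linear) chain maps would only yield ordinary graded commutativity of the invariant subalgebra $H^\bullet_c(B)^{\mathscr{Z}}$, not braided commutativity of all of $H^\bullet_c(B)$; to reach the braided statement one must run the whole argument through inner homs, where the interchange law itself acquires $R$-matrix corrections, and none of that is set up in your sketch. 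The route that does work is precisely the one you relegate to an ``alternative'': define the braided circle operation $f\circ g(x)=\pm f\bigl((r^j\cdot x_1)\otimes(r_j\cdot g)(x_2)\otimes x_3\bigr)$ and verify the Gerstenhaber-type cochain identity expressing $f\cup g-(-1)^{|f||g|}(r^j\cdot g)\cup(r_j\cdot f)$ as $\pm d_c(f\circ g)\pm d_c(f)\circ g\mp f\circ d_c(g)$. That identity is Proposition \ref{prop:bdcup} of the paper, it is where the real work lies, and it transfers verbatim to $C^\bullet_{c,E}(B)$ since the circle operation preserves that subcomplex.
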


Theorem \ref{thm:3} is a quantum analog of the well known fact that the usual Hochschild cohomology of any algebra is a (graded) commutative ring~\cite{gerstenhaber63}.  We also prove

\begin{theorem}[=\ref{cor:HcVHH}/\ref{cor:749}]\label{thm:01}
Let $E$ be a finite dimensional Hopf algebra and $A$ be an $E$-module algebra.  There is an algebra isomorphism between the Hochschild cohomology $HH^\bullet(A,A\ast E)$ and the $E$-relative braided Hochschild cohomology $H^\bullet_{c,E}(A\ast E)$.  When, further, $E$ is semisimple and cosemisimple the Hochschild cohomology $HH^\bullet(A,A\ast E)$ is isomorphic to the non-relative braided cohomology $H^\bullet_{c}(A\ast E)$.
\end{theorem}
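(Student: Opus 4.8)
My plan is to establish the first isomorphism by computing the $E$-relative braided Hochschild cohomology of $B:=A\ast E$ via an explicit resolution and recognizing the result as the ordinary Hochschild cochain complex of $A$ with coefficients in $A\ast E$. For this I would use the reduced $E$-relative braided bar resolution of $B$ over its braided enveloping algebra $B^{\unl{e}}$,
\[
\cdots\longrightarrow B\ox_E\bar B^{\,\ox_E n}\ox_E B\longrightarrow\cdots\longrightarrow B\ox_E B\longrightarrow B\longrightarrow 0,\qquad \bar B:=B/E,
\]
where every tensor product over $E$ is formed using the braiding $c$, so that the terms are $B^{\unl{e}}$-modules in $\mathscr Z$ and the complex is $E^{\unl{e}}$-split. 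By the braided analogue of the classical argument this is a resolution by $(B^{\unl{e}},E^{\unl{e}})$-relative projectives: the usual contracting homotopy is assembled from the multiplication and unit of $B$ and the splitting $B=E\oplus\bar B$ in $\mathscr Z$, hence is a morphism in $\mathscr Z$. Consequently it computes $H^\bullet_{c,E}(B)$, with its Yoneda ring structure computed on the chain level by any diagonal approximation.

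The decisive structural input is that $B=A\ast E$ is free over $E$ on either side: as an object of $\mathscr Z$ one has $B\cong A\ox E$ with $A=A\ox 1$ and $E=1\ox E$, so $\bar B\cong\bar A\ox E$ as an $E$-bimodule in $\mathscr Z$ and, inductively, $\bar B^{\,\ox_E n}\cong\bar A^{\ox n}\ox E$ with $E$ acting through the right-hand factor. Using this $E$-freeness to collapse $\Hom_{B^{\unl{e}}}(-,B)$ applied to the resolution, one obtains a cochain complex whose $n$-th term is $\Hom_k(\bar A^{\ox n},A\ast E)=C^n(A,A\ast E)$. One then checks that the differential of the $E$-relative braided bar resolution passes to the Hochschild differential of $A$ with values in the $A$-bimodule $A\ast E$: that differential is an alternating sum of multiplications by $B$, of which the two outer ones become the left and right $A$-actions on $A\ast E$ and the inner ones become multiplication in $A$, the $E$-factor being carried along passively. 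This yields an isomorphism $H^\bullet_{c,E}(B)\cong HH^\bullet(A,A\ast E)$ of graded vector spaces.

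To promote it to an algebra isomorphism I would compare cup products. On $C^\bullet(A,A\ast E)$ the cup product is the usual one built from the algebra structure of $A\ast E$ (which indeed makes $HH^\bullet(A,A\ast E)$ a ring, since $A\to A\ast E$ is an algebra map); on the braided side the ring structure is induced by the braided Alexander--Whitney diagonal $B\ox_E\bar B^{\,\ox_E(m+n)}\ox_E B\to(B\ox_E\bar B^{\,\ox_E m}\ox_E B)\ox_B(B\ox_E\bar B^{\,\ox_E n}\ox_E B)$, which is a morphism in $\mathscr Z$. Transporting this diagonal through the identifications of the previous paragraph reproduces, term by term, the Alexander--Whitney formula for the cup product on $C^\bullet(A,A\ast E)$, so the cohomology isomorphism respects both ring structures. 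Associativity and braided commutativity on the left are already furnished by Theorem \ref{thm:3}, so matching the products in this single model suffices.

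For the last assertion I would remove the relativeness when $E$ is semisimple and cosemisimple. In that case the Drinfeld double $D(E)$ is semisimple, so $\mathscr Z\simeq D(E)\mathrm{mod}$ is a semisimple category, and moreover $E$ (hence $E^{\unl{e}}$) is a separable algebra object in $\mathscr Z$ — the separability idempotent of the semisimple algebra $E$ is $\mathscr Z$-linear once $E$ is also cosemisimple. Separability of $E^{\unl{e}}$ makes every $E^{\unl{e}}$-module in $\mathscr Z$ a direct summand of an induced module $E^{\unl{e}}\ox V$ with $V\in\mathscr Z$, and semisimplicity of $\mathscr Z$ makes $V$ projective; hence each term $B^{\unl{e}}\ox_{E^{\unl{e}}}(E^{\unl{e}}\ox V)\cong B^{\unl{e}}\ox V$ in the split-summand form of the resolution above is an honest projective $B^{\unl{e}}$-module in $\mathscr Z$. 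Thus the $E$-relative resolution is already an absolute $B^{\unl{e}}$-projective resolution of $B$, so $H^\bullet_{c,E}(B)=H^\bullet_c(B)$ as graded rings, and combining this with the first part finishes the proof. I expect the main obstacle to lie in the second paragraph: one must track the Yetter--Drinfeld structure and every occurrence of the braiding $c$ carefully enough to be sure that the $E$-factor genuinely survives the collapse, the differential, and the Alexander--Whitney map without contributing correction terms; granting Theorem \ref{thm:3}, everything else is routine bookkeeping.
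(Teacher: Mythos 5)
Your proposal is correct and follows essentially the same route as the paper: collapse the $E$-relative braided bar resolution using $E$-freeness of $A\ast E$ to identify $C^\bullet_{c,E}(A\ast E)$ with the ordinary complex $C^\bullet(A,A\ast E)$, match cup products on the chain level (the paper phrases the product as a braided convolution on $\uHom(\mathscr{B}B,B)$, which is your Alexander--Whitney diagonal in different clothing), and pass from relative to absolute cohomology via semisimplicity of the Drinfeld double together with separability of $E^{\unl{e}}$ in $\mathscr{Z}$. The one step you defer to ``careful bookkeeping'' --- that the braided left $E$-linearity constraint on cochains is automatic given right $E$-linearity, so the collapse really does yield all of $\Hom_k(\bar{A}^{\ox n},A\ast E)$ --- is exactly the paper's key lemma in Section 4, and it does hold.
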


Theorem \ref{thm:01} was first proved by Schedler and Witherspoon in the case of a finite group acting on an algebra in characteristic $0$~\cite{schedlerwitherspoon}.  We also give a version of Theorem \ref{thm:01} for certain crossed products and {\it twists} of smash products (see \ref{sect:w_cocycle}).  Theorem \ref{thm:01} implies that there is an algebra identification between $HH^\bullet(A\ast E)$ and the $E$-invariant classes in $H_c^\bullet(A\ast E)$ when $E$ is semisimple and cosemisimple, among other things (see Corollary \ref{cor:749}).  The theorem also implies that the cohomology $HH^\bullet(A,A\ast E)$ takes values in the category of Yetter-Drinfeld modules over $E$, and that it is braided commutative in that category.
\par

The fact that, when $E$ is semisimple and cosemisimple, we have an isomorphism with the non-relative cohomology $H^\bullet_c(A\ast E)$ is of some significance.  For example, we can use this fact to show that the formal deformation theory of $A\ast E$ as an associate algebra is equivalent to the formal deformation theory of $A\ast E$ as an $E$-module algebra (under the adjoint action).  This is done in Section \ref{sect:defthry}.
\par

We also apply the two above theorems to gain some general information about the Hochschild cohomology of smash products $A\ast G$, where $G$ is a finite group.  As mentioned above, in characteristic $0$ there is an algebra equality
\[
HH^\bullet(A\ast G)=HH^\bullet(A,A\ast G)^G\ \ (\text{and now }HH^\bullet(A\ast G)=H_c^\bullet(A\ast G)^G),
\]
and more generally a spectral sequence relating the two cohomologies.  Whence we study the cohomology $HH^\bullet(A\ast G)$ by studying $HH^\bullet(A,A\ast G)$.
\par

The cohomology $HH^\bullet(A,A\ast G)$ decomposes as a sum
\[
HH^\bullet(A,A\ast G)=\bigoplus_{g\in G} HH^\bullet(A,Ag),
\]
and we get an action of the Hochschild cohomology $HH^\bullet(A)=HH^\bullet(A,Ae)$ on each summand induced by the multiplication on $HH^\bullet(A,A\ast G)$.  In certain examples, these $HH^\bullet(A)$-actions determine a great deal of the multiplicative structure on the entire cohomology~\cite{SW,pflaumetal11}.
\par

In Proposition \ref{prop:whatever!}, we apply Theorems \ref{thm:3} and \ref{thm:01} to show that each summand $HH^\bullet(A,Ag)$ is a symmetric bimodule over the Hochschild cohomology of $A$, and find explicit ideals in $HH^\bullet(A)$ annihilating each given summand.  In the case of a finite group acting on a smooth affine $k$-scheme $X$, for example, our results imply that each cohomology $HH^\bullet(k[X],k[X]g)$ carries a canonical module structure over the algebra of polyvector fields $\wedge^\bullet T_{X^g}$ on the fixed space $X^g$ (see \ref{sect:ideals}).  This result is expected from its $C^\infty$-analog, which was studied in \cite{pflaumetal11}.  We expect that the ideals specified in Proposition \ref{prop:whatever!} are in fact the entire annihilators of each $HH^\bullet(k[X],k[X]g)$ for a general smooth affine $G$-scheme $X$ (see again \ref{sect:ideals}).

\setcounter{tocdepth}{1}
\tableofcontents

\section{Background}

\subsection{Conventions}
Throughout this work $\mathscr{Z}$ will always denote a braided monoidal category which admits an equality $\mathscr{Z}=H\mathrm{mod}$, for some quasitriangular Hopf algebra $H$.  We also fix the Hopf algebra $H$ throughout.  The invariants $(-)^\mathscr{Z}$ is defined abstractly as maps from the unit $(-)^{\mathscr{Z}}=\Hom_\mathscr{Z}(\mathbf{1}_\mathscr{Z},-)$.  This will agree with the usual invariants $(-)^\mathscr{Z}=(-)^H$.  We let $c$ denote the braiding transformation on $\mathscr{Z}$.  
\par

A dot $\cdot$ generally denotes the action of $H$, while juxtaposition denotes multiplication in a ring.  All Hopf algebras are assumed to have bijective antipode.
\par

By $\ox$ we mean $\ox_k$.  We use a simplified Sweedler's notation $\Delta(x)=x_1\ox x_2$ to denote comultiplication in a coalgebra.  So ``$x_1\ox x_2$" here is a symbol denoting a sum of pure tensors.
\par

Given an abelian category $\mathscr{M}$, we let $\mathrm{dg}\mathscr{M}$ denote the category of cochain complexes of objects in $\mathscr{M}$.  For any two complexes $M$ and $N$ in $\mathrm{dg}\mathscr{M}$ we always let $\Hom_{\mathscr{M}}(M,N)$ denote the hom {\it complex}
\[
\Hom_{\mathscr{M}}(M,N)=\bigoplus_{n\in\mathbb{Z}}\left\{\ba{c}\text{homogenous degree $n$ maps}\\
M\to N\text{ in }\mathscr{M}\ea\right\}.
\]
The differential is the usual one 
\[
d_{\Hom_{\mathscr{M}}(M,N)}(f)=d_Nf-(-1)^{|f|}fd_M.
\]
We view a ``graded object" in $\mathscr{M}$ as a complex in $\mathrm{dg}\mathscr{M}$ with vanishing differential.  In particular, the cohomology $H^\bullet(M)$ of a complex in $\mathrm{dg}\mathscr{M}$ will be seen as an object in $\mathrm{dg}\mathscr{M}$ with vanishing differential.

\subsection{Quasitriangular Hopf algebras}
\label{sect:quasitriangle}

We follow, for the most part, the notation of~\cite{EGNO}.  For a quasitriangular Hopf algebra $H$, with $R$-matrix $R\in H\ox H$, we take $R=\sum_j r_j\ox r^j$ and $R_{21}=\sum_j r^j\ox r_j$.  We adopt an Einstein sum notation and write $r_j\ox r^j$ and $r^j\ox r_j$ for these elements respectively.  The braiding on the corresponding category $\mathscr{Z}=H\mathrm{mod}$ will be given by
\begin{equation}\label{eq:86}
c_{M,N}(m\ox n)=(r^j \cdot n)\ox (r_j\cdot m).
\end{equation}
The $R$-matrix satisfies a number of relations, including the following:
\begin{equation}\label{eq:relations}
\ba{l}
\bullet\ \Delta(h)R_{21}=R_{21}\Delta^{op}(h),\\
\bullet\ (id\ox S)(R_{21})=(S^{-1}\ox id)(R_{21})=(R_{21})^{-1},\\
\bullet\ (\epsilon\ox id)(R_{21})=(id\ox\epsilon)(R_{21})=1\ox 1.
\ea
\end{equation}
\begin{equation}\label{eq:braidrels}
\ba{l}
\bullet\ r^j\ox (r_j)_1\ox (r_j)_2=r^lr^j\ox r_l\ox r_j,\hspace{1.7cm}\\
\bullet\ (r^j)_1\ox (r^j)_2\ox r_j=r^j\ox r^l\ox r_lr_j.
\ea
\end{equation}
The latter two relations are the {\it braid relations}, and all of the relations can be equated with the fact that $c$, as defined at (\ref{eq:86}), is a braiding on the category $H\mathrm{mod}$.

\subsection{The algebra $A\ast\G$ and $YD^E_E$}
\label{sect:smashprud}

We recall that the category of (right) Yetter-Drinfeld modules over a Hopf algebra $E$ is the category whose objects are vector spaces $M$ equipped with both a right $E$-action and right $E$-coaction which satisfy the compatibility
\[
(m\cdot w)_0\ox (m\cdot w)_1=m_0\cdot w_2\ox S(w_1)m_1w_3,
\]
for each $m\in M$ and $w\in E$.  Morphisms in $YD^E_E$ are maps $M\to N$ which are simultaneously module and comodule maps.  This category is braided monoidal, with braiding
\[
c_{M,N}:M\ox N\to N\ox M,\ \ m\ox n\mapsto n_0\ox (m\cdot n_1).
\]
When $E$ is finite dimensional, the category $YD^E_E$ is equal to the category of modules over a quasitriangular Hopf algebra called the Drinfeld, or quantum, double $D$ of $E$.  (Or rather, the double of the opposite algebra of $E$ if we follow~\cite{EGNO}).  So $YD^E_E$ can serve as one of our categories $\mathscr{Z}$.
\par

The Hopf algebra $D$ will contain $E^{op}$ and $E^\ast$ as Hopf subalgebras, and the restriction of the multiplication map $E^{op}\ox E^\ast\to D$ will be a vector space isomorphism.  Here $E^{op}$ means $E$ with the opposite multiplication, usual comultiplication, and inverted antipode $S_{E^{op}}=S^{-1}_E$.  If we take a basis $\{r_j\}_j$ of $E$, and dual basis $\{r^j\}_j$ of $E^\ast$, then the $R$-matrix in $D$ will be the element $\sum_jr_j\ox r^j$.
\par

Let $A$ be a $E$-module algebra.  We denote the given action of $E$ on $A$ by a superscript, ${^wa}:=w\cdot a$, and denote elements in the smash product $A\ast E$ by juxtaposition $aw=a\ast w$.  Any such smash product $A\ast\G$ becomes an algebra object in $YD^E_E$ under the right adjoint action 
\[
(aw)\cdot w':=S(w'_1) aww'_2=({^{S(w'_2)}a})S(w'_1)ww'_3
\]
and obvious coaction $\rho(aw)=aw_1\ox w_2$.  Or, if we think about $E^{op}$ in the Drinfeld double $D$, we have the {\it left} action given by
\[
w'\cdot (aw)=S_E(w'_1) aww'_2
\]
In the above expressions $a\in A$ and $w,w'\in E$.
\par

Throughout this work a smash product $A\ast E$ we be considered as an algebra object in the braided monoidal category $YD^E_E$.

\subsection{Inner homs}\label{sect:inhoms}

Given any Hopf algebra $H$, the category $H\mathrm{mod}$ admits inner homs, i.e. a functor $\uHom:H\mathrm{mod}^{op}\times H\mathrm{mod}\to H\mathrm{mod}$ with an adjunction
\begin{equation}\label{eq:adj}
\Hom_{H\mathrm{mod}}(L\ox M,N)\cong \Hom_{H\mathrm{mod}}(L,\uHom(M,N)).
\end{equation}
As a vector space we will have $\uHom(M,N)=\Hom_k(M,N)$, and the adjunction (\ref{eq:adj}) is the usual $\ox\text{-}\Hom$ adjunction.  The action of $H$ on $\uHom(M,N)=\Hom_k(M,N)$ will be given by $h\cdot f=\big(m\mapsto h_1\cdot f(S(h_2)\cdot m\big)$, for any $h\in H$, $f\in \Hom_k(M,N)$, $m\in M$.  One can check easily the identity
\[
\uHom(M,N)^H=\Hom_{H\mathrm{mod}}(M,N).
\]
Under the aforementioned action the canonical pairing
\[
\uHom(M,N)\ox M\to N
\]
becomes $H$-linear.  Indeed, this is the {\it unique} action so that the pairing is $H$-linear.

\section{Braided Hochschild cohomology as a $\mathscr{Z}$-valued invariant}
\label{sect:braidedhh}

Take a braided monoidal category $\mathscr{Z}=H\mathrm{mod}$ and algebra $B$ in $\mathscr{Z}$.  In this section we cover all of the basics for the braided Hochschild cohomology $H^\bullet_c(B)$.  We show that the braided cohomology can be identified with the cohomology of a certain canonical complex, which is a braided analog of the standard Hochschild cochain complex.  This complex will be a complex in $\mathscr{Z}$, so that each cohomology group $H^i_c(B)$ will be seen to be an object in $\mathscr{Z}$.  
\par

We then go on to consider a relative version of the braided Hochschild cohomology.  The relative cohomology will provide an essential tool in establishing relations between the braided Hochschild cohomology and standard, non-braided, Hochschild cohomology.
\par

We first recall some facts from the original work~\cite{baez94}.  Given an algebra $B$ in $\mathscr{Z}$ we let $B^{\unl{op}}$ denote the braided opposite algebra, which has multiplication $b\cdot_{\unl{op}}b':=(r^j\cdot b')(r_j\cdot b)$.  Given algebras $B$ and $C$ in $\mathscr{Z}$ we let $B\unl{\ox} C$ denote the braided tensor algebra.  This is the vector space $B\ox C$ with the multiplication
\[
(b\ox c)(b'\ox c')=\big(b(r^j\cdot b')\big)\ox \big((r_j\cdot c)c'\big).
\]
We let $B^{\unl{e}}$ denote the enveloping algebra $B^{\unl{op}}\unl{\ox} B$.  The algebra $B$ becomes a {\it right} module over $B^{\unl{e}}$ under the action
\[
B\ox (B^{\unl{op}}\unl{\ox} B)\to B,\ \ a\ox (b\ox b')\mapsto (r^j\cdot b)(r_j\cdot a)b'.
\]
That is to say, we follow the obvious sequence
\[
\xym{
(B)\ox B\ox B\ar[r]^{c_{B,B}\ox id} & B\ox (B)\ox B\ar[r]^(.7){m^{(2)}} & B, 
}
\]
where $m^{(2)}$ is the second iteration of the multiplication on $B$.

\begin{definition}\label{def:ogdef}
The braided Hochschild cohomology is defined as the extension group $H^\bullet_c(B):=\Ext^\bullet_{\mathrm{mod}B^{\unl{e}}}(B,B)$
\end{definition}

It is shown in~\cite{baez94} that the standard bar resolution
\[
\cdots \to B\ox B^{\ox 2}\ox B\to B\ox B\ox B\to B\ox B\to 0
\]
becomes a projective resolution of $B$ over $B^{\unl{e}}$ when we endow each $B\ox B^{\ox n}\ox B$ with the $B^{\unl{e}}$-action
\begin{equation}\label{eq:actn}
(a\ox x\ox a')(b\ox b'):=(r^j\cdot b)\big(r_j\cdot (a\ox x\ox a')\big)b',
\end{equation}
where $x\in B^{\ox n}$ and $b(a\ox x\ox a')b'=(ba)\ox x\ox (a'b')$.  (Here $H$ acts diagonally on the higher tensor powers of $B$.)  We denote this complex by $\mathscr{B}ar^cB$.
\par

To see that $\mathscr{B}ar^c B$ is projective in each degree we note that the $B^{\unl{e}}$-action map restricts to give an isomorphism $B^{\ox n}\ox B^{\unl{e}}\to B\ox B^{\ox n}\ox B$, for each $n$.  The inverse to this map is given by
\begin{equation}\label{eq:94}
\ba{l}
B\ox B^{\ox n}\ox B\to B^{\ox n}\ox B^{\unl{e}},\\
b\ox x\ox b'\mapsto (S(r_j)\cdot x)\ox (r^j\cdot b)\ox b'=(r_j\cdot x)\ox (S^{-1}(r^j)\cdot b)\ox b'
\ea
\end{equation}
\par

Recall that the category $\mathscr{Z}=H\mathrm{mod}$ has inner homs.  These objects are denoted $\uHom$ and are equal to the standard $k$-linear homs $\Hom_k$ along with the left $H$-action as described in Section \ref{sect:inhoms}.

\begin{lemma}\label{lem:105}
For each $n$ we have an isomorphism 
\[
\Hom_{\mathrm{mod}B^{\unl{e}}}(B\ox B^{\ox n}\ox B, B)\overset{\cong}\to \uHom(B^{\ox n},B)
\]
given by restriction.  The inverse of the restriction map is given by
\begin{equation}\label{eq:og_rels}
\ba{c}
\uHom(B^{\ox n},B)\to \Hom_{\mathrm{mod}B^{\unl{e}}}(B\ox B^{\ox n}\ox B, B)\\
f\mapsto \Big(b\ox x\ox b'\mapsto (r^j\cdot b) \big(r_j\cdot f\big)(x)b'\Big).
\ea
\end{equation}
\end{lemma}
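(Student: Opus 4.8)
The plan is to realize both sides as restrictions/corestrictions of the $\otimes$-Hom adjunction and the projectivity isomorphism already recorded in~(\ref{eq:94}). First I would recall that $B\ox B^{\ox n}\ox B$ is a free right $B^{\unl{e}}$-module on the subspace $B^{\ox n}$ via the action map, with explicit inverse~(\ref{eq:94}). Since $\Hom$ out of a free module is determined by its value on a set of free generators, restriction along the inclusion $B^{\ox n}\hookrightarrow B\ox B^{\ox n}\ox B$, $x\mapsto 1\ox x\ox 1$, gives a $k$-linear isomorphism $\Hom_{\mathrm{mod}B^{\unl{e}}}(B\ox B^{\ox n}\ox B, B)\xrightarrow{\cong}\Hom_k(B^{\ox n},B)=\uHom(B^{\ox n},B)$ as vector spaces. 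The content of the lemma beyond this is that (a) the inverse is given by the stated formula~(\ref{eq:og_rels}), and (b) both sides carry natural $H$-module structures and restriction is $H$-linear, so that the isomorphism is one in $\mathscr{Z}$ (the $\Hom$-complex on the left acquires its $H$-action because $B\ox B^{\ox n}\ox B$ and $B$ are objects in $\mathscr{Z}$ and the $B^{\unl{e}}$-action is $H$-equivariant; cf. Section~\ref{sect:inhoms}).

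For (a), I would take an arbitrary $B^{\unl{e}}$-linear $F\colon B\ox B^{\ox n}\ox B\to B$ and compute $F(b\ox x\ox b')$ by writing $b\ox x\ox b'$ as $(1\ox x'\ox 1)\cdot(\beta\ox\beta')$ for the appropriate element of $B^{\unl{e}}$ dictated by~(\ref{eq:94}), then apply $B^{\unl{e}}$-linearity to move everything onto $f:=F|_{B^{\ox n}}$. Concretely, $(1\ox x\ox 1)(b\ox b')=(r^j\cdot b)(r_j\cdot 1)\ox x\ox (1\cdot b')$ wait — more precisely, unwinding~(\ref{eq:actn}) on the generator $1\ox x\ox 1$ gives $(1\ox x\ox 1)(b\ox b') = (r^j\cdot b)\ox x\ox b'$ after using $\epsilon$-normalization of the $R$-matrix, so applying $F$ and $B^{\unl{e}}$-linearity yields $F\big((r^j\cdot b)\ox x\ox b'\big)=(r^j\cdot b)\cdot_{?}F(1\ox x\ox 1)\cdot b'$; reconciling the diagonal $H$-action on $B^{\ox n}$ with the twist in~(\ref{eq:94}) produces exactly the formula $b\ox x\ox b'\mapsto (r^j\cdot b)\,(r_j\cdot f)(x)\,b'$. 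This is the one genuinely computational step, and it amounts to carefully substituting the inverse~(\ref{eq:94}) and invoking the normalization relations~(\ref{eq:relations}) together with the braid relations~(\ref{eq:braidrels}); I expect it to go through mechanically but it is where one must be most careful about which copy of the $R$-matrix appears and whether $S$ or $S^{-1}$ is used.

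Conversely, for well-definedness of~(\ref{eq:og_rels}) I would check directly that the assignment $f\mapsto\big(b\ox x\ox b'\mapsto (r^j\cdot b)(r_j\cdot f)(x)b'\big)$ lands in $B^{\unl{e}}$-linear maps: this is a diagram-chase using that the $B^{\unl{e}}$-action on $B\ox B^{\ox n}\ox B$ is built (as displayed before Definition~\ref{def:ogdef}) from $c_{B,B}$ and the iterated multiplication $m^{(2)}$, that $f$ itself is merely $k$-linear, and that $H$ acts on $\uHom(B^{\ox n},B)$ by the adjoint formula of Section~\ref{sect:inhoms}; the braid relations~(\ref{eq:braidrels}) are what make the two ways of moving the right $B$-factor past $x$ agree. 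Finally, composing the two maps in both orders and using~(\ref{eq:relations}) (in particular $(\epsilon\ox id)(R_{21})=1\ox1$, which collapses the $r^j\ox r_j$ pair when $b=b'=1$, $x$ arbitrary) shows they are mutually inverse, and $H$-linearity of restriction is immediate since the inclusion $B^{\ox n}\hookrightarrow B\ox B^{\ox n}\ox B$ is a map in $\mathscr{Z}$. The main obstacle is purely bookkeeping: keeping straight the interplay between the diagonal $H$-action on $B^{\ox n}$, the twisted free-generator identification~(\ref{eq:94}), and the adjoint $H$-action on inner homs, so that the stated formula really is $H$-equivariant and inverse to restriction on the nose rather than up to a sign or an antipode.
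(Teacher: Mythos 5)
Your proposal takes essentially the same route as the paper's proof: restriction is an isomorphism because $B\ox B^{\ox n}\ox B$ is free over $B^{\unl{e}}$ via (\ref{eq:94}), and the inverse formula follows by writing $b\ox x\ox b'=(1\ox S(r_j)\cdot x\ox 1)\cdot\big((r^j\cdot b)\ox b'\big)$, applying $B^{\unl{e}}$-linearity, and invoking the braid relation $(id\ox\Delta)(R_{21})=r^lr^j\ox r_l\ox r_j$ together with the adjoint action on $\uHom(B^{\ox n},B)$. The only slip in your sketch is the intermediate identity $(1\ox x\ox 1)(b\ox b')=(r^j\cdot b)\ox x\ox b'$: the $\epsilon$-normalization trivializes the action only on the two outer copies of $1$, so the correct statement is $(1\ox x\ox 1)(b\ox b')=(r^j\cdot b)\ox (r_j\cdot x)\ox b'$ --- which is precisely why the antipode twist $S(r_j)\cdot x$ appears in (\ref{eq:94}) --- and with that fixed your computation reproduces the paper's.
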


\begin{proof}
First note that restriction does in fact provide an isomorphism between $\Hom_{\mathrm{mod}B^{\unl{e}}}(B\ox B^{\ox n}\ox B, B)$ and $\uHom(B^{\ox n},B)$, since each $B\ox B^{\ox n}\ox B$ is free over $B^{\unl{e}}$.  Now, we have for any $F\in \Hom_{\mathrm{mod}B^{\unl{e}}}(B\ox B^{\ox n}\ox B,B)$
\[
\begin{array}{ll}
F(b\ox x\ox b')&=F(1\ox S(r_j)\cdot x\ox 1)\big((r^j\cdot b)\ox b'\big)\\
&=\big(r^lr^j\cdot b)\big(r_l\cdot F(1\ox S(r_j)\cdot x\ox 1))b'.
\ea
\]
The braid relation
\[
(id\ox \Delta)(R_{21})=r^lr^j\ox r_l\ox r_j,
\]
then implies
\[
\big(r^lr^j\cdot b)\big(r_l\cdot F(1\ox S(r_j)\cdot x\ox 1)\big)b'=(r^j\cdot b)\Big((r_j)_1\cdot F(1\ox S\big((r_j)_2\big)\cdot x\ox 1)\Big)b'
\]
If we let $f=F| B^{\ox n}$ the final expression is equal to $(r^j\cdot b_0) \big(r_j\cdot f\big)(x)b'$.  This shows that the preimage of $f$ along the restriction isomorphism, which in this case is $F$, is given by the proposed formula (\ref{eq:og_rels}).
\end{proof}

As a result of the lemma we get an explicit isomorphism of graded spaces
\begin{equation}\label{eq:196}
\Hom_{\mathrm{mod}B^{\unl{e}}}(\mathscr{B}ar^c B, B)\overset{\cong}\to \bigoplus_{n\geq 0} \uHom(B^{\ox n},B).
\end{equation}
This induces a differential on the right hand side under which the two complexes will be canonically isomorphic.
\par

In the following lemma $d_{\mathscr{B}B}$ denotes the standard differential
\[
d_{\mathscr{B}B}(b_1\ox\dots\ox b_l)=\sum_i (-1)^ib_1\ox \dots\ox b_ib_{i+1}\ox\dots \ox b_l.
\]
on the bar {\it construction} 
\[
\mathscr{B}B=\cdots\to B^{\ox 3}\to B^{\ox 2}\to B\overset{0}\to k\to 0.
\]
The bar construction and bar resolution are related by the formula $\mathscr{B}ar^cB=B\ox\mathscr{B}B\ox B$.  To be precise, one arrives at the bar resolution by taking a ``twisted tensor product" between $B$ and the bar construction so that
\[
d_{\mathscr{B}ar^cB}=id_B\ox d_{\mathscr{B}B}\ox id_B+\text{extremal termal}.
\]

\begin{proposition}\label{prop:129}
Let $d_c$ denote the differential on $\bigoplus_n \uHom(B^{\ox n},B)$ induced by the restriction isomorphism (\ref{eq:196}).
\begin{enumerate}
\item $d_c$ is given in degree $0$ by the formula
\[
-d_c(f)(b)=(r^j\cdot b)\big(r_j\cdot f\big)(1)-f(1)b
\]
and in higher degree by the formula
\begin{equation}\label{eq:dR}
\ba{l}
(-1)^{|f|+1}d_c(f)(b\ox y\ox b')\\
=(r^j\cdot b)\big(r_j\cdot f\big)(y\ox b')+f(d_{\mathscr{B}B}(b\ox y\ox b'))+(-1)^{n+1}f(b\ox y)b',
\ea
\end{equation}
where $f$ is a homogenous map, $y\in B^{\ox n-2}$, and $b,b'\in B$.
\item Each differential $d_c:\uHom(B^{\ox n},B)\to \uHom(B^{\ox n+1},B)$ is a map in $\mathscr{Z}$.
\end{enumerate}
\end{proposition}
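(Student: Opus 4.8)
The plan is to prove (1) by transporting the differential on $\Hom_{\mathrm{mod}B^{\unl{e}}}(\mathscr{B}ar^cB,B)$ across the restriction isomorphism \eqref{eq:196}, and to prove (2) by recognizing the resulting formula as assembled from the structure maps of $\mathscr{Z}$. Since $B$ has zero internal differential, the differential on $\Hom_{\mathrm{mod}B^{\unl{e}}}(\mathscr{B}ar^cB,B)$ is, up to the usual sign, precomposition with $d_{\mathscr{B}ar^cB}$. Given $f\in\uHom(B^{\ox n-1},B)$ I would take its preimage $F_f$ under \eqref{eq:196}, namely the $B^{\unl{e}}$-linear map \eqref{eq:og_rels}, and evaluate $F_f\circ d_{\mathscr{B}ar^cB}$ on a generator $1\ox b\ox y\ox b'\ox1$ of $B\ox B^{\ox n}\ox B$. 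Using the splitting $d_{\mathscr{B}ar^cB}=\mathrm{id}_B\ox d_{\mathscr{B}B}\ox\mathrm{id}_B+(\text{two extremal terms})$: the lowest-index extremal term multiplies the left module unit against the tensor factor $b$, leaving $b$ in the module coordinate, and applying $F_f$ to the result---using that $F_f$ restricted to $1\ox(-)\ox1$ is $f$, since $\epsilon$ applied to either leg of $R_{21}$ gives $1$ by \eqref{eq:relations}---produces the term $(r^j\cdot b)(r_j\cdot f)(y\ox b')$; the highest-index extremal term multiplies $b'$ against the right module unit, producing $\pm f(b\ox y)b'$; and the interior part produces $f\circ d_{\mathscr{B}B}$. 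Setting $b=b'=1$ gives the degenerate degree-$0$ formula, where only the two extremal terms occur. Collecting these three contributions and carrying the global sign coming from the hom-complex differential yields the stated formulas.

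For (2): reading off \eqref{eq:dR}, $d_c$ is, up to the locally constant scalar $(-1)^{|f|+1}$, a sum of three maps, each of which I would present as a morphism in $\mathscr{Z}$ using the inner-hom adjunction \eqref{eq:adj}. First, $f\mapsto f\circ d_{\mathscr{B}B}$ is precomposition by $d_{\mathscr{B}B}$, i.e.\ $\uHom(d_{\mathscr{B}B},B)$, which is a morphism in $\mathscr{Z}$ because $d_{\mathscr{B}B}\colon B^{\ox n}\to B^{\ox n-1}$ is built from the $H$-linear multiplication of $B$. Second, $f\mapsto\big(b\ox y\ox b'\mapsto f(b\ox y)b'\big)$ is the adjunct of $\mathrm{mult}\circ(\mathrm{ev}\ox\mathrm{id}_B)\colon\uHom(B^{\ox n-1},B)\ox B^{\ox n-1}\ox B\to B\ox B\to B$, a composite of the canonical evaluation pairing $\mathrm{ev}$ (a $\mathscr{Z}$-morphism by Section \ref{sect:inhoms}) with the multiplication. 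Third, $f\mapsto\big(b\ox y\ox b'\mapsto(r^j\cdot b)(r_j\cdot f)(y\ox b')\big)$ is the adjunct of the composite $\uHom(B^{\ox n-1},B)\ox B\ox B^{\ox n-1}\to B\ox B\to B$ which first applies the braiding $c_{\uHom(B^{\ox n-1},B),\,B}$ in the first two tensor slots, then $\mathrm{ev}$ in the last two, then $\mathrm{mult}$; the essential observation is that $f\ox b\mapsto(r^j\cdot b)\ox(r_j\cdot f)$ is exactly the braiding $c$ as written at \eqref{eq:86}. Since the braiding, the evaluation pairing, the multiplication, and the adjunction all live in $\mathscr{Z}$, each $d_c$ is a morphism in $\mathscr{Z}$; the degree-$0$ map is the same computation with $B^{\ox n-1}$ replaced by the unit object.

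I do not anticipate a serious obstacle here: the proposition is essentially a bookkeeping lemma resting on the explicit isomorphism \eqref{eq:og_rels} provided by Lemma \ref{lem:105}. The points that demand care are the sign matching in (1) and, in (2), the recognition that the braided prefactor $(r^j\cdot b)(r_j\cdot f)$ is \emph{literally} an application of $c$ rather than a coincidental resemblance. As a more conceptual alternative to the term-by-term check in (2), one can instead observe that \eqref{eq:196} is equivariant for the conjugation $H$-action on $\Hom_{\mathrm{mod}B^{\unl{e}}}(\mathscr{B}ar^cB,B)$---which is well defined precisely because the $B^{\unl{e}}$-action maps on $\mathscr{B}ar^cB$ are $H$-linear---and that precomposition with the chain map $d_{\mathscr{B}ar^cB}$, a morphism of $H$-modules, preserves that action.
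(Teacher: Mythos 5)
Your argument for part (1) is essentially the paper's own: take the preimage $F_f$ of $f$ under the restriction isomorphism via the explicit formula (\ref{eq:og_rels}), evaluate $F_f\circ d_{\mathscr{B}ar^cB}$ on the generator $1\ox b\ox y\ox b'\ox 1$, and read off the two extremal terms and the interior term. (One small slip of exposition: for the \emph{lowest} extremal term the left module coordinate is $b$, not $1$, so what you use there is the full formula (\ref{eq:og_rels}) rather than the counit relation; the counit relation $(\epsilon\ox id)(R_{21})=1\ox 1$ is what you need for the interior and highest terms. Your stated outputs are nevertheless the correct ones.) For part (2) you genuinely diverge from the paper. The paper observes, as you do, that the braided term is the composite $(\mathrm{mult})\circ(id\ox\mathrm{pair})\circ(c\ox id)$ of morphisms in $\mathscr{Z}$, but it uses this only to derive the identity (\ref{eq:the1}) and then verifies $h\cdot d_c(f)=d_c(h\cdot f)$ by an explicit Sweedler-notation computation term by term. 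You instead invoke the adjunction (\ref{eq:adj}) to pass directly from ``each of the three terms is the adjunct of a composite of $\mathscr{Z}$-morphisms'' to ``$d_c$ is a morphism in $\mathscr{Z}$,'' which is cleaner and avoids the hand computation; your alternative via $H$-equivariance of the restriction isomorphism (which is essentially the non-relative precursor of the paper's Lemma \ref{lem:309}) is also valid, granted the routine check that conjugation preserves $B^{\unl{e}}$-linearity because the action maps on $B$ and on $\mathscr{B}ar^cB$ are $H$-linear. What the paper's explicit computation buys is the reusable identities (\ref{eq:the1})--(\ref{eq:the3}), which it cites again in the proof of Lemma \ref{lem:309}; what your route buys is brevity and a conceptual explanation of \emph{why} the differential is $H$-linear.
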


\begin{proof}
(1) Let $F$ be the preimage of $f$ along the restriction map.  Suppose $|f|=n>0$.  According to the previous lemma we have
\[
\ba{l}
(-1)^{|f|+1}d_c(f)(b\ox y\ox b')=d(F)(1\ox b\ox y\ox b'\ox 1)\\
=F(b\ox y\ox b'\ox 1)+(-1)^{n+1}F(1\ox b\ox y\ox b')+F(1\ox d_{\mathscr{B}B}(b\ox y\ox b')\ox 1)\\
=(r^j\cdot b)\big(r_j\cdot f\big)(y\ox b')+f(d_{\mathscr{B}B}(b\ox y\ox b'))+(-1)^{n+1}f(b\ox y)b'.
\ea
\]
The formula in degree $0$ is verified similarly.
\par

(2) We note first that, for $f\in \uHom(B^{\ox n},B)$, $b\in B$, $x\in B^{\ox n}$, the element
\[
(r^j\cdot b)\big(r_j\cdot f\big)(x)
\]
is the image of the monomial $f\ox b\ox x$ under the sequence of maps
\[
\uHom(B^{\ox n},B)\ox B\ox B^{\ox n}\overset{c\ox id}\longrightarrow B\ox\uHom(B^{\ox n},B)\ox B^{\ox n}\overset{id\ox\mathrm{pair}}\longrightarrow B\ox B \overset{\mathrm{mult}}\longrightarrow  B,
\]
all of which are morphisms in $\mathscr{Z}=H\mathrm{mod}$.  So for any $h\in H$ we have
\[
h\cdot \big((r^j\cdot b)(r_j\cdot f)(x)\big)=(r^jh_2\cdot b)(r_jh_1\cdot f)(h_3\cdot x).
\]
Replacing $h\ox b\ox x$ with $h_1\ox (S(h_3)\cdot b)\ox (S(h_2)\cdot x)$ then gives
\begin{equation}\label{eq:the1}
\ba{l}
h_1\cdot \big((r^jS(h_3)\cdot b)(r_j\cdot f)(S(h_2)\cdot x)\big)\\
=(r^jh_2S(h_5)\cdot b)(r_jh_1\cdot f)(h_3S(h_4)\cdot x)\\
=(r^jh_2\epsilon(h_3)S(h_4)\cdot b)(r_jh_1\cdot f)(\cdot x)\\
=(r^j\cdot b)(r_jh\cdot f)(x).
\ea
\end{equation}
One can also check the more obvious relations
\begin{equation}\label{eq:the2}
h_1\cdot \Big(f\big(d_{\mathscr{B}B}(S(h_2)\cdot x)\big)\Big) =h_1\cdot\Big(f\big(S(h_2)\cdot d_{\mathscr{B}B}(x))\Big)=(h\cdot f)\big(d_{\mathscr{B}B}(x)\big)
\end{equation}
and
\begin{equation}\label{eq:the3}
h_1\cdot \Big(f(S(h_3)\cdot x)(S(h_2)\cdot b')\Big)=\big(h_1\cdot f(S(h_2)\cdot x)\big)b'=(h\cdot f)(x)b'.
\end{equation}
So we see that for $x=b\ox y\ox b'$ we have
\[
\ba{l}
(-1)^{|f|+1}\big(h\cdot d_c(f)-d_c(h\cdot f)\big)(x)\\
=(-1)^{|f|+1} \Big(\big(h\cdot d_c(f)\big)(b\ox y\ox b')-d_c\big(h\cdot f\big)(b\ox y\ox b')\Big)\\
=h_1\cdot \Big((r^jS(h_3)\cdot b)(r_j\cdot f)\big(S(h_2)\cdot (y\ox b')\big)\Big)-(r^j\cdot b)\big(r_lh\cdot f\big)(y\ox b')\\
+h_1\cdot \Big(f\big(d_{\mathscr{B}B}(S(h_2)x)\big)\Big)-(h\cdot f)\big(d_{\mathscr{B}B}(x)\big)\\
+h_1\cdot \Big(f(S(h_3)\cdot (b\ox y))\big(S(h_2)\cdot b'\big)\Big)-(h\cdot f)(b\ox y)b'.
\ea
\]
Checking the above expressions (\ref{eq:the1})-(\ref{eq:the3}), we see that the final sum is $0$.  So $h\cdot d_c(f)=d_c(h\cdot f)$ and therefore $d_c$ is $H$-linear .
\end{proof}

\begin{definition}\label{defn:braided_cmplx}
We let $C^\bullet_c(B)$ denote the complex
\[
C^\bullet_c(B)=0\to \uHom(k,B)\overset{d_c^0}\to \uHom(B,B)\overset{d_c^1}\to \uHom(B^{\ox 2},B)\to\cdots,
\]
where $d_c$ is given by the formula (\ref{eq:dR}).  We call this complex the braided Hochschild cochain complex.
\end{definition}

Proposition \ref{prop:129} tells us that $C_c^\bullet(B)$ is a chain complex in $\mathscr{Z}$.  So we get

\begin{theorem}
\begin{enumerate}
\item The braided Hochschild cohomology $H_c^\bullet(B)$ is the cohomology of the braided Hochschild complex $C^\bullet_c(B)$.
\item The cohomology $H_c^\bullet(B)$ is an object in $\mathrm{dg}\mathscr{Z}$ (with vanishing differential).
\end{enumerate}
\end{theorem}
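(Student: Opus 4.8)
The plan is simply to assemble ingredients already in place. For part~(1), recall from Definition~\ref{def:ogdef} that $H^\bullet_c(B)=\Ext^\bullet_{\mathrm{mod}B^{\unl{e}}}(B,B)$, and from the discussion following it that $\mathscr{B}ar^cB$ is a projective resolution of $B$ in $\mathrm{mod}B^{\unl{e}}$: projectivity (indeed freeness) in each degree is witnessed by the $B^{\unl{e}}$-linear isomorphism $B^{\ox n}\ox B^{\unl{e}}\to B\ox B^{\ox n}\ox B$ with explicit inverse~(\ref{eq:94}), while exactness is inherited from the underlying complex of $k$-vector spaces, which is the ordinary unbraided bar resolution of $B$ over $B^e$ and hence is contractible via the standard extra degeneracy $b_0\ox\cdots\ox b_{n+1}\mapsto 1\ox b_0\ox\cdots\ox b_{n+1}$. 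Consequently $\Ext^\bullet_{\mathrm{mod}B^{\unl{e}}}(B,B)$ is computed as the cohomology of the complex $\Hom_{\mathrm{mod}B^{\unl{e}}}(\mathscr{B}ar^cB,B)$.

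Next I would invoke Lemma~\ref{lem:105}, which gives the isomorphism of graded vector spaces~(\ref{eq:196}) between $\Hom_{\mathrm{mod}B^{\unl{e}}}(\mathscr{B}ar^cB,B)$ and $\bigoplus_{n\ge 0}\uHom(B^{\ox n},B)$, obtained by restricting cocycles along $B^{\ox n}\hookrightarrow B\ox B^{\ox n}\ox B$. By the very definition of $d_c$ (it was defined in Definition~\ref{defn:braided_cmplx} to be the differential transported across~(\ref{eq:196})), together with the explicit formula~(\ref{eq:dR}) verified in Proposition~\ref{prop:129}(1), this isomorphism is in fact an isomorphism of complexes $\Hom_{\mathrm{mod}B^{\unl{e}}}(\mathscr{B}ar^cB,B)\cong C^\bullet_c(B)$. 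Passing to cohomology yields $H^\bullet_c(B)\cong H^\bullet(C^\bullet_c(B))$, which is part~(1).

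For part~(2): Proposition~\ref{prop:129}(2) says each $d_c\colon\uHom(B^{\ox n},B)\to\uHom(B^{\ox n+1},B)$ is a morphism in $\mathscr{Z}=H\mathrm{mod}$, and each $\uHom(B^{\ox n},B)$ is an object of $\mathscr{Z}$ by the construction of inner homs in Section~\ref{sect:inhoms}; hence $C^\bullet_c(B)$ is an object of $\mathrm{dg}\mathscr{Z}$. Since $\mathscr{Z}$ is abelian (being a category of modules over a Hopf algebra), kernels and images of the $d_c$ are again objects of $\mathscr{Z}$, so each $H^i_c(B)=\ker(d_c^i)/\mathrm{im}(d_c^{i-1})$ is naturally an object of $\mathscr{Z}$; viewing the graded object $H^\bullet_c(B)$ as a complex with vanishing differential, as in our conventions, places it in $\mathrm{dg}\mathscr{Z}$. (This in particular reproves Theorem~\ref{thm:3}(1).)

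There is essentially no obstacle here beyond bookkeeping: every substantive point — freeness of $\mathscr{B}ar^cB$ over $B^{\unl{e}}$, the hom-isomorphism~(\ref{eq:196}), the identification of the transported differential with the formula~(\ref{eq:dR}), and the $H$-linearity of $d_c$ — has already been carried out in Lemma~\ref{lem:105} and Proposition~\ref{prop:129}. The only point warranting a second glance is that the restriction isomorphism of Lemma~\ref{lem:105} is simultaneously compatible, in all degrees, with the extremal boundary terms of $d_{\mathscr{B}ar^cB}$ appearing in the twisted tensor product decomposition $\mathscr{B}ar^cB=B\ox\mathscr{B}B\ox B$; but this compatibility is exactly what the computation in the proof of Proposition~\ref{prop:129}(1) verifies, and nothing further is needed.
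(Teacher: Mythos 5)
Your proposal is correct and follows the same route as the paper, which simply cites Lemma \ref{lem:105} together with the construction of $d_c$ for part (1) and Proposition \ref{prop:129} for part (2); you have merely written out the details (freeness of $\mathscr{B}ar^cB$ over $B^{\unl{e}}$, exactness via the standard contracting homotopy, and the abelianness of $\mathscr{Z}$) that the paper leaves implicit or attributes to~\cite{baez94}. No gaps.
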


\begin{proof}
Statement (1) follows from Lemma \ref{lem:105} and the construction of the differential $d_c$.  Statment (2) follows from (1) and Proposition \ref{prop:129}.
\end{proof}

\begin{remark}
The complex $C^\bullet_c(B)$ may be legitimate even when we replace $\mathscr{Z}$ with an arbitrary braided multi-tensor category, with possibly nontrivial associator.  This is despite the fact that Definition \ref{def:ogdef} may make no sense in this setting.
\end{remark} 

\subsection{Relative braided cohomology}

Given any subalgebra $E\subset B$ in $\mathscr{Z}$ we can define the relative bar resolution $\mathscr{B}ar_E^c B$ to be the complex
\[
\cdots\to B\ox_E B^{\ox_E 2}\ox_E B\to B\ox_E B\ox_E B\to B\ox_E B\to 0 
\]
with the usual differential and right $B^{\unl{e}}$-action given by the same formula (\ref{eq:actn}).  We define the $E$-relative Hochschild cohomology as the cohomology
\[
H^\bullet_{c,E}(B)=H^\bullet\big(\Hom_{\mathrm{mod}B^{\unl{e}}}(\mathscr{B}ar_E^c B,B)\big).
\]
We similarly let $\mathscr{B}_EB$ denote the relative bar construction
\[
\mathscr{B}_EB=\cdots\to B^{\ox_E 3}\to B\ox_E B\to B\overset{0}\to E\to 0
\]
with the usual differential, as above.
\par

We have an embedding
\begin{equation}\label{eq:468}
\Hom_{\mathrm{mod}B^{\unl{e}}}(\mathscr{B}ar^c_EB, B)\to \Hom_{\mathrm{mod}B^{\unl{e}}}(\mathscr{B}ar^cB, B)=C^\bullet_c(B)
\end{equation}
of cochain complexes dual to the projection $\mathscr{B}ar^cB\to \mathscr{B}ar^c_EB$.  In order to think clearly about the image of this embedding we introduce a new category.

\begin{definition}
Let $E$ be an algebra in $\mathscr{Z}$.  We let $_E\mathscr{Z}_E$ denote the category of objects $M$ in $\mathscr{Z}$ with an additional $E$-bimodule structure such that the action $E\ox M\ox E\to M$ is a map in $\mathscr{Z}$.  Morphisms in this category will be maps in $\mathscr{Z}$ which are also $E$-bimodule maps.
\end{definition}

Both $B^{\ox n}$ and $B^{\ox_E n}$ will be objects in $_E\mathscr{Z}_E$, for example.  The projections $B^{\ox n}\to B^{\ox_E n}$ will be maps in $_E\mathscr{Z}_E$.
\par 

The category $_E\mathscr{Z}_E$ is actually the category of modules over a certain smash-product-esque algebra~\cite[Lemma 3.2]{kaygun}.  Whence $_E\mathscr{Z}_E$ will be abelian with enough projectives.  There is a natural functor
\begin{equation}\label{314}
Forget:{_E\mathscr{Z}_E}\to \mathrm{mod}E^{\unl{e}}
\end{equation}
which sends an object $M$ to the vector space $M$ with the right $E^{\unl{e}}$-action $x\cdot(w\ox w')=(r^j\cdot w)(r_j\cdot x)w'$ for each $x\in M$, $w,w'\in E$.
\par

Given any $M$ and $N$ in $_E\mathscr{Z}_E$ the maps in $\Hom_{\mathrm{mod}E^{\unl{e}}}(M,N)$ are exactly those $k$-linear maps $f$ which satisfy the additional relations
\begin{equation}\label{eq:1}
f(xw)=f(x)w
\end{equation}
and
\begin{equation}\label{eq:2}
f(wx)=(r^j\cdot w)\big(r_j\cdot f)(x)
\end{equation}
for each $x\in M$, $w\in E$.  The relation (\ref{eq:1}) is obviously just right $E$-linearity while (\ref{eq:2}) should be familiar from Lemma \ref{lem:105}.
\par

Recall that the set of inner homs $\uHom(M,N)$ in $\mathscr{Z}$ is equal to the set of $k$-linear maps, after we forget the $\mathscr{Z}$-structure.

\begin{lemma}\label{lem:w/e}
In each degree $n$, the image of each embedding (\ref{eq:468}) is exactly the subset
\[
\Hom_{\mathrm{mod}E^{\unl{e}}}(B^{\ox_E n},B)\subset \uHom(B^{\ox n},B).
\]
\end{lemma}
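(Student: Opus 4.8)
The plan is to unwind the isomorphism (\ref{eq:196}) degree by degree and check that the relative homs are carried exactly onto the $E^{\unl{e}}$-invariant homs. First I would recall that the projection $\mathscr{B}ar^cB \to \mathscr{B}ar^c_EB$ is, in degree $n$, the canonical surjection $B\ox B^{\ox n}\ox B \to B\ox_E B^{\ox_E n}\ox_E B$ of right $B^{\unl{e}}$-modules. Dualizing, the embedding (\ref{eq:468}) identifies $\Hom_{\mathrm{mod}B^{\unl{e}}}(B\ox_E B^{\ox_E n}\ox_E B, B)$ with the subspace of those $F \in \Hom_{\mathrm{mod}B^{\unl{e}}}(B\ox B^{\ox n}\ox B, B)$ which factor through that surjection, i.e. those $F$ that vanish on the kernel generated by all relations of the form $b e \ox \cdots = b \ox e \cdot(\text{next slot})$. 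So the task reduces to translating ``$F$ descends to the relative bar resolution'' into a condition on its restriction $f = F|_{B^{\ox n}} \in \uHom(B^{\ox n},B)$ under the isomorphism of Lemma \ref{lem:105}.

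The key computational step is to show that $F$ factors through $B\ox_E B^{\ox_E n}\ox_E B$ if and only if $f$ factors through $B^{\ox_E n}$ and moreover $f$, viewed as a map out of $B^{\ox_E n}$, is a morphism in $\mathrm{mod}E^{\unl{e}}$ — that is, $f$ satisfies the two relations (\ref{eq:1}) and (\ref{eq:2}). Here I would use that $B\ox B^{\ox n}\ox B$ is free over $B^{\unl{e}}$ on $B^{\ox n}$ (via (\ref{eq:94})), and that under (\ref{eq:og_rels}) the value of $F$ is $F(b\ox x\ox b') = (r^j\cdot b)(r_j\cdot f)(x)b'$. The relation that collapses the rightmost $E$-slot, $F(b\ox x \ox b'e) = F(b\ox x\ox b')e$ where one instead moves $e$ into $x$, unwinds — using right $E$-linearity of multiplication in $B$ and the formula for $f$ — to exactly the condition $f(x e) = f(x) e$, i.e. (\ref{eq:1}). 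Symmetrically, the relation collapsing a left $E$-slot forces $f(e x) = (r^j\cdot e)(r_j\cdot f)(x)$, i.e. (\ref{eq:2}); this is the computation where the braiding genuinely enters, via the same braid-relation manipulation already used in the proof of Lemma \ref{lem:105} and in (\ref{eq:the1}). The interior slot-merging relations of $B^{\ox_E n}$ correspond to $f$ factoring through $B^{\ox_E n}$ as a plain linear map, which is immediate since $f$ acts only on the middle tensor factor.

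I would then assemble these equivalences: the subspace of $\uHom(B^{\ox n},B)$ cut out by ``factors through $B^{\ox_E n}$, and satisfies (\ref{eq:1}) and (\ref{eq:2})'' is by definition (recalling the description of $\Hom_{\mathrm{mod}E^{\unl{e}}}$ preceding the lemma, applied to $M = B^{\ox_E n}$ and $N = B$ in ${}_E\mathscr{Z}_E$) precisely $\Hom_{\mathrm{mod}E^{\unl{e}}}(B^{\ox_E n}, B)$. This matches the claimed image, and since (\ref{eq:468}) is injective by construction, the identification is as stated. One should also note compatibility with the differentials, but that is automatic: (\ref{eq:468}) is a map of complexes by construction, so no extra check is needed for the statement of the lemma itself.

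The main obstacle I expect is the bookkeeping in the second equivalence — verifying that the ``collapse a left $E$-slot'' relation on $\mathscr{B}ar^c_E B$ translates cleanly into relation (\ref{eq:2}) rather than some twisted variant. The subtlety is that in $B\ox B^{\ox n}\ox B$ an element $e$ sitting to the left of the $B^{\ox n}$-block interacts with the $B^{\unl{e}}$-action through the braiding $c_{B,B}$ built into (\ref{eq:actn}), so one must be careful to move $e$ across using the \emph{braided} tensor structure and then recognize the result as $(r^j\cdot e)(r_j\cdot f)(x)$; getting the $R$-matrix indices and the antipodes to cancel correctly (as in (\ref{eq:94}) and (\ref{eq:the1})) is the delicate point. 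Everything else is formal manipulation of free modules and the adjunction already established in Lemma \ref{lem:105}.
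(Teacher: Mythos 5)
Your proposal is correct and follows essentially the same route as the paper, which proves the lemma (tersely) by combining the explicit formula (\ref{eq:og_rels}) of Lemma \ref{lem:105} with the characterization of $\Hom_{\mathrm{mod}E^{\unl{e}}}$ via relations (\ref{eq:1}) and (\ref{eq:2}). Your expanded account — translating the descent of $F$ along $\mathscr{B}ar^cB\to\mathscr{B}ar^c_EB$ into the conditions that $f$ factor through $B^{\ox_E n}$ and satisfy (\ref{eq:1}) and (\ref{eq:2}) — is exactly the computation the paper is pointing to.
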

It may actually be more informative to think instead about the sequence of inclusions
\[
\Hom_{\mathrm{mod}E^{\unl{e}}}(B^{\ox_E n},B)\subset \uHom(B^{\ox_E n},B)\subset \uHom(B^{\ox n},B)
\]
\begin{proof}
This follows from the formula for the identification (\ref{eq:og_rels}) of Lemma \ref{lem:105}, as well as the description of the maps in $\Hom_{\mathrm{mod}E^{\unl{e}}}$ given in equation (\ref{eq:1}) and (\ref{eq:2}).
\end{proof}

\begin{lemma}\label{lem:309}
For any $M,N$ in $_E\mathscr{Z}_E$, the embedding $\Hom_{\mathrm{mod}E^{\underline{e}}}(M,N)\to\uHom(M,N)$ realizes $\Hom_{\mathrm{mod}E^{\unl{e}}}(M,N)$ as an object in $\mathscr{Z}$. 
\end{lemma}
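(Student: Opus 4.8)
The plan is to show that the image of the embedding — which, after forgetting the $\mathscr{Z}$-structure, is precisely the subspace of $k$-linear maps $f\colon M\to N$ cut out by the two relations (\ref{eq:1}) and (\ref{eq:2}) — is an $H$-submodule of the object $\uHom(M,N)\in\mathscr{Z}$. Once that is established, $\Hom_{\mathrm{mod}E^{\unl{e}}}(M,N)$ is automatically an object of $\mathscr{Z}=H\mathrm{mod}$ and the embedding is a morphism in $\mathscr{Z}$. Recall that $H$ acts on $\uHom(M,N)=\Hom_k(M,N)$ by $(h\cdot f)(m)=h_1\cdot f(S(h_2)\cdot m)$, so it suffices to check that $h\cdot f$ again satisfies (\ref{eq:1}) and (\ref{eq:2}) whenever $f$ does and $h\in H$ is arbitrary.

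For (\ref{eq:1}): I would expand $(h\cdot f)(xw)=h_1\cdot f\big(S(h_2)\cdot(xw)\big)$, use that the right $E$-action $M\ox E\to M$ is a map in $\mathscr{Z}$ (since $M\in{_E\mathscr{Z}_E}$) together with the fact that $S$ is an anti-coalgebra map to rewrite $S(h_2)\cdot(xw)=(S(h_3)\cdot x)(S(h_2)\cdot w)$, then apply (\ref{eq:1}) for $f$ and the $H$-linearity of the right action on $N$, and finally collapse the resulting product of antipodes using the antipode and counit axioms. This is a routine Sweedler-notation manipulation of exactly the type carried out for (\ref{eq:the2}) and (\ref{eq:the3}) in the proof of Proposition \ref{prop:129}(2), and it produces $\big((h\cdot f)(x)\big)w$, as required.

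For (\ref{eq:2}): I would similarly expand $(h\cdot f)(wx)=h_1\cdot f\big(S(h_2)\cdot(wx)\big)$, use $H$-linearity of the \emph{left} action $E\ox M\to M$ and the anti-coalgebra property of $S$ to get $S(h_2)\cdot(wx)=(S(h_3)\cdot w)(S(h_2)\cdot x)$, and then apply (\ref{eq:2}) for $f$. The point is that the expression one arrives at, namely $h_1\cdot\big((r^jS(h_3)\cdot w)(r_j\cdot f)(S(h_2)\cdot x)\big)$, is exactly the left-hand side of the identity established inside the proof of Proposition \ref{prop:129}(2) — read now with $b$ replaced by $w\in E$ and the multiplication of $B$ replaced by the left action $E\ox N\to N$. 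That identity is proved there purely formally, from the fact that the composite $\uHom(M,N)\ox E\ox M\overset{c\ox\mathrm{id}}\to E\ox\uHom(M,N)\ox M\overset{\mathrm{id}\ox\mathrm{pair}}\to E\ox N\overset{\mathrm{act}}\to N$ is a morphism in $\mathscr{Z}$ (each factor is: the braiding, the $H$-linear pairing of Section \ref{sect:inhoms}, and the action map for $N\in{_E\mathscr{Z}_E}$), so the argument transports verbatim. It gives $(h\cdot f)(wx)=(r^j\cdot w)(r_jh\cdot f)(x)=(r^j\cdot w)\big(r_j\cdot(h\cdot f)\big)(x)$, the last equality because $\uHom(M,N)$ is a genuine $H$-module so $r_j\cdot(h\cdot f)=(r_jh)\cdot f$; this is precisely relation (\ref{eq:2}) for $h\cdot f$.

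Having checked both relations, $\Hom_{\mathrm{mod}E^{\unl{e}}}(M,N)$ is an $H$-submodule of $\uHom(M,N)$, hence an object of $\mathscr{Z}$, and the embedding is $H$-linear; this is the assertion of the lemma. The only real work is the index bookkeeping in the two Sweedler computations, and since (\ref{eq:2}) is the relation carrying the $R$-matrix, the single mildly subtle point is to recognize that the needed equivariance has already been proved inside Proposition \ref{prop:129}(2) — so I do not anticipate a genuine obstacle, just careful tracking of coproduct indices.
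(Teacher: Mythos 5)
Your proposal is correct and follows essentially the same route as the paper: the paper likewise dispatches relation (\ref{eq:1}) as a straightforward computation and handles relation (\ref{eq:2}) by expanding $(h\cdot f)(wx)$ to $h_1\cdot\bigl((r^jS(h_3)\cdot w)(r_j\cdot f)(S(h_2)\cdot x)\bigr)$ and invoking the identity already established in the proof of Proposition \ref{prop:129}(2) to reduce it to $(r^j\cdot w)(r_jh\cdot f)(x)$. Your observation that this identity transports verbatim because the relevant composite of the braiding, the pairing, and the action map is a morphism in $\mathscr{Z}$ is exactly the justification implicit in the paper.
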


\begin{proof}
We need to show that each $\Hom_{\mathrm{mod}E^{\underline{e}}}(M,N)$ is stable under the action of $H$, where $\mathscr{Z}=H\mathrm{mod}$.  A straightforward computation shows that the subspace of right $E$-linear maps in a $H$-submodule in $\uHom(M,N)$, so we need only concern ourselves with the relation (\ref{eq:2}).  
\par

Suppose $f\in \uHom(M,N)$ satisfies the relation (\ref{eq:2}) and take any $h\in H$.  Then
\[
(h\cdot f)(wx)=h_1\cdot \Big((r^jS(h_3)\cdot w)\big(r_j\cdot f\big)(S(h_2)\cdot x)\big)\Big).
\]
We find just as in the proof of Proposition \ref{prop:129} (2) the equality
$$
h_1\cdot \Big((r^jS(h_3)\cdot w)\big(r_j\cdot f\big)(S(h_2)\cdot x)\big)\Big)=(r^j\cdot w)\big(r_jh\cdot f\big)(x),
$$
which gives the desired relation
\[
(h\cdot f)(wx)=(r^j\cdot w)\big(r_jh\cdot f\big)(x).
\]
So we see that $\Hom_{\mathrm{mod}E^{\underline{e}}}(M,N)\subset \uHom(M,N)$ is an $H$-submodule.
\end{proof}

In order to emphasize the natural $\mathscr{Z}$-structure on these hom sets we adopt

\begin{notation}
For any $M$ and $N$ in $_E\mathscr{Z}_E$ we let $\uHom_E(M,N)$ denote the set of maps $\Hom_{\mathrm{mod}E^{\unl{e}}}(M,N)$ along with its $\mathscr{Z}$-structure induced by the inclusion $\Hom_{\mathrm{mod}E^{\underline{e}}}(M,N)\to\uHom(M,N)$.
\end{notation}

We remark that this hom set may not truly be an inner hom set in a category, despite the fact that we have elected to adopt the underline notation.
\par

Lemma \ref{lem:309} implies that the image of $\Hom_{\mathrm{mod}B^{\unl{e}}}(\mathscr{B}ar^c_EB,B)$ in $C^\bullet_c(B)$ will be a subcomplex in $\mathscr{Z}$.  So the complex $\Hom_{\mathrm{mod}B^{\unl{e}}}(\mathscr{B}ar^c_EB,B)$ becomes naturally an object in $\mathrm{dg}\mathscr{Z}$, and the relative braided Hochschild cohomology will again be $\mathscr{Z}$-valued.
\par

We now write down an explicit complex $C^\bullet_{c,E}(B)$ analogous to the braided Hochschild cochain complex constructed in the non-relative instance: first, Lemma \ref{lem:w/e} tells us that the image of the embedding (\ref{eq:468}) is the graded subspace
\[
\bigoplus_{n\in \mathbb{Z}}\uHom_E(B^{\ox_E n},B)\subset C^\bullet_c(B).
\]
Second, since (\ref{eq:468}) is a chain map we see that this subspace is a subcomplex under the differential on $C^\bullet_c(B)$.  So restricting to the generators now provides a cochain complex isomorphism
\[
\Hom_{\mathrm{mod}B^{\unl{e}}}(\mathscr{B}ar^c_E B,B)\overset{\cong}\to \big(\bigoplus_{n\in \mathbb{Z}}\uHom_E(B^{\ox_E n},B), d_c\big),
\]
where the differential $d_c$ is given by the formula (\ref{eq:dR}).

\begin{definition}
We let $C^\bullet_{c,E}(B)$ denote the complex
\[
C^\bullet_{c,E}(B)=0\to \uHom_E(E,B)\overset{d_c^0}\to \uHom_E(B,B)\overset{d_c^1}\to \uHom_E(B^{\ox_E 2},B)\overset{d_c^2}\to\cdots.
\]
\end{definition}

The following proposition is now clear.

\begin{proposition}
We have $H_{c,E}^\bullet(B)=H^\bullet\big(C^\bullet_{c,E}(B)\big)$, and the cohomology $H^\bullet_{c,E}(B)$ is an object in $\mathrm{dg}\mathscr{Z}$.
\end{proposition}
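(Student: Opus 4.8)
The plan is simply to assemble ingredients that are already in place. First I would note that the discussion immediately preceding the statement produces an isomorphism of \emph{cochain complexes}
\[
\Hom_{\mathrm{mod}B^{\unl{e}}}(\mathscr{B}ar^c_E B,B)\;\overset{\cong}{\longrightarrow}\;C^\bullet_{c,E}(B),
\]
given by restricting a $B^{\unl{e}}$-linear map to the generating subspaces $B^{\ox_E n}$, with inverse the relative version of the formula (\ref{eq:og_rels}) from Lemma \ref{lem:105}. The point that this is a chain map, and not merely a graded isomorphism, comes from the fact that the embedding (\ref{eq:468}) is a chain map whose image, by Lemma \ref{lem:w/e}, is exactly the graded subspace $\bigoplus_n\uHom_E(B^{\ox_E n},B)$, and the differential on $C^\bullet_{c,E}(B)$ is by definition the restriction of $d_c$. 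Taking cohomology of this isomorphism and comparing with the definition $H^\bullet_{c,E}(B)=H^\bullet\big(\Hom_{\mathrm{mod}B^{\unl{e}}}(\mathscr{B}ar^c_E B,B)\big)$ yields the first assertion $H^\bullet_{c,E}(B)=H^\bullet\big(C^\bullet_{c,E}(B)\big)$.

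Next I would check that $C^\bullet_{c,E}(B)$ is a complex in $\mathscr{Z}$. Each term $\uHom_E(B^{\ox_E n},B)$ is an object in $\mathscr{Z}$ by Lemma \ref{lem:309}, since $B^{\ox_E n}$ and $B$ lie in ${}_E\mathscr{Z}_E$; and each differential $d_c$ is a morphism in $\mathscr{Z}$ because, via the identification of Lemma \ref{lem:w/e}, $C^\bullet_{c,E}(B)$ sits inside $C^\bullet_c(B)$ as a subcomplex on which the ambient differential — which is $H$-linear by Proposition \ref{prop:129}(2) — restricts, and the restriction of an $H$-linear map to an $H$-submodule is again $H$-linear. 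Hence $C^\bullet_{c,E}(B)$ is already an object of $\mathrm{dg}\mathscr{Z}$.

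Finally, since $\mathscr{Z}=H\mathrm{mod}$ is abelian and the $d_c$ are morphisms there, each cohomology group $H^i_{c,E}(B)$ is a subquotient in $\mathscr{Z}$ of the objects $\uHom_E(B^{\ox_E i},B)$, hence again an object of $\mathscr{Z}$; viewing the graded object $H^\bullet_{c,E}(B)$ as a complex with vanishing differential then places it in $\mathrm{dg}\mathscr{Z}$, exactly as in the non-relative case treated above. There is no real obstacle here: the only substantive inputs — the explicit inverse to restriction, the $H$-linearity of $d_c$, and the identification of the relative subcomplex with $\bigoplus_n\uHom_E(B^{\ox_E n},B)$ — have all been established in Lemmas \ref{lem:105}, \ref{lem:w/e}, \ref{lem:309} and Proposition \ref{prop:129}, so the proof amounts to invoking them in the right order. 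If anything requires a word of care it is confirming that the relative differential is literally the restriction of $d_c$ along the embedding (\ref{eq:468}), which is immediate from the compatibility of the restriction isomorphisms in the relative and absolute settings.
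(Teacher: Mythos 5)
Your proposal is correct and follows exactly the route the paper intends: the paper states that this proposition ``is now clear'' from the immediately preceding discussion, namely the restriction isomorphism onto $\bigl(\bigoplus_n\uHom_E(B^{\ox_E n},B),d_c\bigr)$, Lemma \ref{lem:w/e}, Lemma \ref{lem:309}, and the $H$-linearity of $d_c$ from Proposition \ref{prop:129}. You have simply spelled out in full the same assembly of ingredients, so there is nothing to add.
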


Our main intent in introducing the relative cohomology is to provide a relationship between the braided Hochschild cohomology and standard Hochschild cohomology.  With the above information we will be able to relate the standard Hochschild cohomology to the {\it relative} braided cohomology.  (This is done in Section \ref{sect:id_w_hh}.)  We would like to establish finally some conditions under which the relative and non-relative braided Hochschild cohomologies agree.

\begin{definition}
We say $E$ is separable in $\mathscr{Z}$ if the category $_E\mathscr{Z}_E$ is semisimple.
\end{definition}

\begin{lemma}\label{lem:semsim}
\begin{enumerate}
\item For any $M$ and $N$ in $_E\mathscr{Z}_E$ there is an equality
\[
\Hom_{_E\mathscr{Z}_E}(M,N)=\uHom_E(M,N)^{\mathscr{Z}}.
\]
\item If $\mathscr{Z}$ is semisimple, then $E$ is separable in $\mathscr{Z}$ whenever $E^{\underline{e}}$ is semisimple as an ordinary $k$-algebra.
\end{enumerate}
\end{lemma}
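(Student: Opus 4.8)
The plan is to prove the two statements more or less independently, using in both cases the identity $\uHom(M,N)^H=\Hom_{H\mathrm{mod}}(M,N)$ from Section~\ref{sect:inhoms} together with the counit relations for the $R$-matrix in (\ref{eq:relations}).

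For part (1), I would start from Lemma~\ref{lem:309}, which says $\uHom_E(M,N)$ is an $H$-submodule of $\uHom(M,N)$; hence $\uHom_E(M,N)^{\mathscr{Z}}=\uHom_E(M,N)\cap\uHom(M,N)^{\mathscr{Z}}$, and by Section~\ref{sect:inhoms} this intersection is the set of $k$-linear $f\colon M\to N$ which are $H$-linear and satisfy both (\ref{eq:1}) and (\ref{eq:2}). It then suffices to show that, for an $H$-linear $f$, relation (\ref{eq:2}) is equivalent to left $E$-linearity $f(wx)=wf(x)$. This is a one-line computation: $H$-linearity of $f$ means $r_j\cdot f=\epsilon(r_j)f$, so the right-hand side of (\ref{eq:2}) becomes $\big((\epsilon(r_j)r^j)\cdot w\big)f(x)$, and $\sum_j\epsilon(r_j)r^j=(\mathrm{id}\ox\epsilon)(R_{21})=1$ by (\ref{eq:relations}). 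Since (\ref{eq:1}) is exactly right $E$-linearity, this identifies $\uHom_E(M,N)^{\mathscr{Z}}$ with the set of $H$-linear $E$-bimodule maps, i.e. with $\Hom_{{}_E\mathscr{Z}_E}(M,N)$.

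For part (2) the key observation is that part (1) exhibits the functor $\Hom_{{}_E\mathscr{Z}_E}(M,-)$ as the composite of $\uHom_E(M,-)\colon{}_E\mathscr{Z}_E\to\mathscr{Z}$ followed by $(-)^{\mathscr{Z}}=\Hom_\mathscr{Z}(\mathbf 1_\mathscr{Z},-)\colon\mathscr{Z}\to k\text{-vector spaces}$, so I would argue that each of these two functors is exact. Exactness of $(-)^{\mathscr{Z}}$ is immediate from the hypothesis that $\mathscr{Z}$ is semisimple, since then $\mathbf 1_\mathscr{Z}$ is projective. Exactness of $\uHom_E(M,-)$ may be checked after applying the exact faithful forgetful functor to $k$-vector spaces, under which it becomes $N\mapsto\Hom_{\mathrm{mod}E^{\unl{e}}}(Forget(M),Forget(N))$ for the forgetful functor $Forget$ of (\ref{314}); as $E^{\unl{e}}$ is semisimple as an ordinary algebra, $Forget(M)$ is a projective $E^{\unl{e}}$-module and this $\Hom$ is exact. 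Hence $\Hom_{{}_E\mathscr{Z}_E}(M,-)$ is exact for every $M$, i.e. every object of ${}_E\mathscr{Z}_E$ is projective; since ${}_E\mathscr{Z}_E$ is the module category of an algebra by \cite[Lemma~3.2]{kaygun}, this forces ${}_E\mathscr{Z}_E$ to be semisimple, which is precisely the assertion that $E$ is separable in $\mathscr{Z}$.

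I expect part (1) to be bookkeeping: the only place requiring care is applying the correct leg of the counit relation for $R_{21}$ in (\ref{eq:relations}). The conceptual content is part (2), and the one subtlety there is the exactness of $\uHom_E(M,-)$ — a priori this functor lands in $\mathscr{Z}$ rather than in vector spaces, so one must note that exactness of a sequence in $\mathscr{Z}=H\mathrm{mod}$ is detected on underlying vector spaces and that the underlying space of $\uHom_E(M,-)$ is $\Hom_{\mathrm{mod}E^{\unl{e}}}(M,-)$, which is the only input where semisimplicity of $E^{\unl{e}}$ enters. Alternatively, one can give a hands-on proof of part (2): split a given short exact sequence in ${}_E\mathscr{Z}_E$ first as $E^{\unl{e}}$-modules, then correct the section to an $H$-invariant one by decomposing the semisimple $H$-module $\uHom_E(M'',M)$; Lemma~\ref{lem:309} ensures this decomposition takes place inside the $E^{\unl{e}}$-linear maps, and part (1) identifies the resulting invariant section as a morphism in ${}_E\mathscr{Z}_E$, giving the desired splitting.
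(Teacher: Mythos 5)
Your proof is correct and follows essentially the same route as the paper: part (1) reduces to the observation that for an $H$-invariant $f$ relation (\ref{eq:2}) collapses to left $E$-linearity via $(\mathrm{id}\ox\epsilon)(R_{21})=1$, and part (2) factors $\Hom_{{}_E\mathscr{Z}_E}(M,-)$ as $(-)^{\mathscr{Z}}\circ\Hom_{\mathrm{mod}E^{\unl{e}}}(M,-)$ and checks each factor is exact. The extra detail you supply (detecting exactness on underlying vector spaces, and the alternative splitting argument) is sound but not needed beyond what the paper records.
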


Note that both $\Hom_{_E\mathscr{Z}_E}(M,N)$ and $\uHom_E(M,N)$ are subsets in $\Hom_k(M,N)$.  So it makes sense to propose that the two sets are equal.

\begin{proof}
(1) Maps in both hom sets are right $E$-linear, as well as maps in $\mathscr{Z}$.  So we need only concern ourselves with left $E$-linearity.  Any $H$-invariant (i.e. $H$-linear) function $f$ in $\uHom_E(M,N)$ will satisfy $h\cdot f=\epsilon(h)f$ for each $h\in H$.  Therefore, for any $w\in E$, $m\in M$, we have
\[
f(wm)=(r^j\cdot w)\big(r_j \cdot f\big)(m)=(r^j\epsilon(r_j)\cdot w)f(m)=wf(m).
\]
Whence $f$ is left $E$-linear.  So $\uHom_E(M,N)^\mathscr{Z}\subset \Hom_{_E\mathscr{Z}_E}(M,N)$.  
\par

Recall now that maps in $_E\mathscr{Z}_E$ are $H$-linear, and hence $H$-invariant.  Therefore, for any $g\in \Hom_{_E\mathscr{Z}_E}(M,N)$,
\[
g(wm)=wg(m)=(r^j\cdot w)\big(r_j\cdot g\big)(m).
\]
So we get the opposite inclusion as well, and hence an equality.
\par

(2) Note that $\mathscr{Z}$ is semisimple if and only if the invariants functor $(-)^\mathscr{Z}$ is exact.  So, if $\mathscr{Z}$ and $E^{\unl{e}}$ are semisimple then for any $M$ in $_E\mathscr{Z}_E$ the hom functor $\Hom_{_E\mathscr{Z}_E}(M,-)$ is exact, since it will be the composite of the exact functors 
\[
(-)^\mathscr{Z}\circ \uHom_E(M,-)=(-)^\mathscr{Z}\circ \Hom_{\mathrm{mod}E^{\unl{e}}}(M,-)
\]
by (1).  So all objects in $_E\mathscr{Z}_E$ are projective, and $E$ is separable in $\mathscr{Z}$.
\end{proof}

The following is a braided version of a result of Gerstenhaber and Schack~\cite{gerstenhaberschack86}.

\begin{proposition}\label{prop:relVreg}
If $E$ is separable in $\mathscr{Z}$ then
\begin{enumerate}
\item the inclusion $C_{c,E}^\bullet(B)\to C_c^\bullet(B)$ is a quasi-isomorphism,
\item there induced map $H^\bullet_{c,E}(B)\to H^\bullet_c(B)$ is an isomorphism in $\mathrm{dg} \mathscr{Z}$.
\end{enumerate}
\end{proposition}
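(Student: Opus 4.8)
The plan is to run the classical ``comparison of resolutions'' argument of Gerstenhaber--Schack~\cite{gerstenhaberschack86} inside $\mathrm{mod}B^{\unl{e}}$, which is an ordinary module category, so that the usual homological algebra applies verbatim. We already know from~\cite{baez94} that $\mathscr{B}ar^cB\to B$ is a projective resolution over $B^{\unl{e}}$, and the projection $\pi\colon\mathscr{B}ar^cB\to\mathscr{B}ar^c_EB$ is a $B^{\unl{e}}$-linear chain map lifting $\mathrm{id}_B$ (both augmentations are multiplication). Hence it suffices to prove that $\mathscr{B}ar^c_EB\to B$ is \emph{also} a projective resolution of $B$ over $B^{\unl{e}}$: granting this, the comparison theorem forces $\pi$ to be a homotopy equivalence of complexes of $B^{\unl{e}}$-modules, and applying the additive functor $\Hom_{\mathrm{mod}B^{\unl{e}}}(-,B)$ turns the dual map $\pi^\ast$ — which is precisely the inclusion (\ref{eq:468}), i.e. $C^\bullet_{c,E}(B)\hookrightarrow C^\bullet_c(B)$ — into a homotopy equivalence of cochain complexes, a fortiori a quasi-isomorphism; that is statement (1). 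Exactness of the augmented complex $\mathscr{B}ar^c_EB\to B\to 0$ is routine: the usual contracting homotopy $s(b_0\ox\cdots\ox b_{n+1})=1\ox b_0\ox\cdots\ox b_{n+1}$ is well-defined on the relative tensor powers and is left $E$-linear, so the relative bar resolution is $E$-split exact.

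The substantive point is therefore projectivity of each term $B\ox_E B^{\ox_E n}\ox_E B$ over $B^{\unl{e}}$, and this is where I would use the hypothesis that $_E\mathscr{Z}_E$ is semisimple. Consider the additive functor $B\ox_E(-)\ox_E B\colon{_E\mathscr{Z}_E}\to\mathrm{mod}B^{\unl{e}}$, where the $B^{\unl{e}}$-action is the evident extension of (\ref{eq:actn}) (one checks, as in the proof of Lemma~\ref{lem:105}, that the diagonal $H$-action and the outer multiplications descend through the relative tensor product). It sends the free bimodule $E\ox V\ox E$ on an object $V\in\mathscr{Z}$ to $B\ox V\ox B$; and by the very same $R$-matrix computation used around (\ref{eq:94}) — which never used $V=B^{\ox n}$, only $V\in\mathscr{Z}$ — the $B^{\unl{e}}$-action map furnishes an isomorphism $V\ox B^{\unl{e}}\overset{\cong}\to B\ox V\ox B$ of right $B^{\unl{e}}$-modules, so $B\ox V\ox B$ is $B^{\unl{e}}$-free. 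Now since $_E\mathscr{Z}_E$ is semisimple, the object $M=B^{\ox_E n}$ is projective there, so the (surjective, bimodule-in-$\mathscr{Z}$) action map $E\ox M\ox E\to M$ splits in $_E\mathscr{Z}_E$, exhibiting $M$ as a direct summand of a free bimodule. Applying $B\ox_E(-)\ox_E B$ then exhibits $B\ox_E B^{\ox_E n}\ox_E B$ as a $B^{\unl{e}}$-module summand of the free module $B\ox B^{\ox_E n}\ox B$, hence projective. Note that this argument is insensitive to whether $\mathscr{Z}$ itself is semisimple. For statement (2): by Lemma~\ref{lem:309} the inclusion $C^\bullet_{c,E}(B)\hookrightarrow C^\bullet_c(B)$ is a morphism of cochain complexes in $\mathscr{Z}=H\mathrm{mod}$, so the induced map $H^\bullet_{c,E}(B)\to H^\bullet_c(B)$ is a morphism in $\mathrm{dg}\mathscr{Z}$; by (1) it is bijective, and a bijective $H$-module map is an isomorphism, so it is an isomorphism in $\mathrm{dg}\mathscr{Z}$.

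The main obstacle is the middle paragraph, and specifically the bookkeeping it rests on: that $E\ox V\ox E$, with the correct $H$-action, is the free object of $_E\mathscr{Z}_E$ on $V\in\mathscr{Z}$ (cf.~\cite{kaygun} and the Forget functor (\ref{314})); that $B\ox_E(-)\ox_E B$ really does carry it to $B\ox V\ox B$ as a $B^{\unl{e}}$-module; and that the isomorphism $V\ox B^{\unl{e}}\cong B\ox V\ox B$ of (\ref{eq:94}) goes through unchanged for arbitrary $V$. Each of these is a braiding computation of exactly the flavor already carried out in Lemma~\ref{lem:105} and Proposition~\ref{prop:129}, so the difficulty is one of care rather than of idea; everything else in the argument is formal.
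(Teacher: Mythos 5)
Your proposal is correct and follows essentially the same route as the paper: both arguments reduce to showing that each $B\ox_E B^{\ox_E n}\ox_E B$ is $B^{\unl{e}}$-projective by using separability to split the action map $E\ox M\ox E\to M$ in $_E\mathscr{Z}_E$, transporting that splitting through $B\ox_E(-)\ox_E B$ to realize the relative term as a summand of the free module $B\ox M\ox B\cong M\ox B^{\unl{e}}$, and then invoking the comparison theorem and $\Hom_{\mathrm{mod}B^{\unl{e}}}(-,B)$. The only (harmless) difference is that you spell out the contracting homotopy for exactness of the relative bar complex, which the paper leaves implicit.
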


\begin{proof}
The second statement follows from the first.  So we address (1).  First, note that for any $M$ in $_E\mathscr{Z}_E$, $B\ox M\ox B$ and $B\ox_E M\ox_E B$ become objects in mod$B^{\unl{e}}$ under the obvious action(s) 
\[
(a\ox m\ox a')\ast (b\ox b')=(r^j\cdot b)\big(r_j\cdot (a\ox m\ox a')\big)b'.
\]
As was the case above, the braiding gives an isomorphism between each $B\ox M\ox B$ and the free $B^{\unl{e}}$-module $M\ox B^{\unl{e}}$.
\par

Now, If $E$ is separable in $\mathscr{Z}$ then for any $M$ in $_E\mathscr{Z}_E$ the action map $m_M:E\ox M\ox E\to M$ will admit a splitting $\sigma_M:M\to E\ox M\ox E$ in $_E\mathscr{Z}_E$.  The multiplication map $m:E\ox E\to E$ will also be split in $_E\mathscr{Z}_E$ by some $\sigma_E:E\to E\ox E$.  Whence the projections
\[
B\ox M\ox B=B\ox_E E\ox M\ox E\ox_E B\overset{id\ox m_M\ox id}\to B\ox_E M\ox_E B
\]
and
\[
B\ox B=B\ox_E E\ox E\ox_E B\overset{id\ox m\ox id}\to B\ox_E B
\]
will be split in mod$B^{\unl{e}}$ by $1\ox \sigma_M\ox 1$ and $1\ox \sigma_E\ox 1$ respectively.  So $B\ox_E M\ox_E B$ and $B\ox_E B$ are summands of free modules and hence projective in mod$B^{\unl{e}}$.  
\par

By the above general argument we find that each $B\ox_E B^{\ox_E n}\ox_E B$ is projective and $\mathscr{B}ar^c_EB\to B$ is a projective resolution of $B$ in mod$B^{\unl{e}}$.  Consequently the projection $\mathscr{B}ar^cB\to \mathscr{B}ar^c_EB$ is a homotopy equivalence, and applying the functor $\Hom_{\mathrm{mod}B^{\unl{e}}}(-,B)$ produces a quasi-isomorphism
\begin{equation}\label{eq:480}
\Hom_{\mathrm{mod}B^{\unl{e}}}(\mathscr{B}ar_E^cB,B)\overset{\sim}\to \Hom_{\mathrm{mod}B^{\unl{e}}}(\mathscr{B}ar^cB,B).
\end{equation}
Under the identifications
\[
\Hom_{\mathrm{mod}B^{\unl{e}}}(\mathscr{B}ar_E^cB,B)=C^\bullet_{c,E}(B)\ \ \mathrm{and}\ \ \Hom_{\mathrm{mod}B^{\unl{e}}}(\mathscr{B}ar^cB,B)=C^\bullet_{c,E}(B),
\]
which are both given by restriction, (\ref{eq:480}) becomes the canonical embedding $C_{c,E}^\bullet(B)\to C_c^\bullet(B)$.  We see now that the embedding is a quasi-isomorphism.
\end{proof}

\section{Identifications with standard Hochschild cohomology for smash products}
\label{sect:id_w_hh}

We fix $E$ to be a finite dimensional Hopf algebra, $A$ an $E$-module algebra, $\mathscr{Z}=YD^E_E$, and $B=A\ast E$.  We take $A\ast E$ to be an algebra in $YD^E_E$ as described in Section \ref{sect:smashprud}.  We can recover $A$ in a functorial manner from $B$ as the coinvariants $A=B^{coE}$.  The subalgebra $E\subset B$ becomes a subalgebra in $YD^E_E$ under the adjoint action of $E$ on itself and regular right coaction.  So we may consider the relative cohomology $H_{c,E}^\bullet(B)$.  
\par

Recall from the relation (\ref{eq:1}) that any map in $\uHom_E(B^{\ox_E n},B)$ will be right $E$-linear.

\begin{lemma}
When $B=A\ast E$, then the inclusion
\[
\uHom_E(B^{\ox_E n}, B)\subset \Hom_{\mathrm{mod}E}(B^{\ox_E n},B)
\]
is an equality.
\end{lemma}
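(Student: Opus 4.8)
One inclusion is automatic. From the description of $\uHom_E$ in terms of the relations (\ref{eq:1}) and (\ref{eq:2}), a $k$-linear map belongs to $\uHom_E(B^{\ox_E n},B)$ exactly when it is right $E$-linear (relation (\ref{eq:1})) \emph{and} satisfies (\ref{eq:2}); hence $\uHom_E(B^{\ox_E n},B)\subseteq\Hom_{\mathrm{mod}E}(B^{\ox_E n},B)$ for \emph{any} algebra $B$ in $\mathscr{Z}$ and \emph{any} subalgebra $E\subset B$. So the real content of the lemma is that, when $B=A\ast E$, right $E$-linearity already forces (\ref{eq:2}). The plan is to prove the slightly stronger assertion that the full right $E^{\unl{e}}$-module structure on each of $B^{\ox_E n}$ and $B$ coincides with the right regular $E$-module structure (i.e.\ right multiplication by $E\subset B$), pulled back along an algebra surjection $E^{\unl{e}}\twoheadrightarrow E$; given this, the equality of hom sets is formal.

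First I would pin down the surjection. Write $\mathscr{Z}=YD^E_E=D\mathrm{mod}$ with $D$ the Drinfeld double of $E$, so $E^{op}$ and $E^\ast$ sit inside $D$. Two features of $B=A\ast E$ are used. First, $E\subset B$ carries the regular coaction $\rho(w)=w_1\otimes w_2$, and $E^\ast\subset D$ acts on a comodule through its coaction, so (by the dual-basis identity) $(r^j\cdot w)\otimes r_j=w_1\otimes w_2$, with the second leg read in $E^{op}\subset D$. Second, by the very definition of the smash product, the $E^{op}\subset D$-action on $B$ is inner conjugation $v\cdot b=S(v_1)\,b\,v_2$, carried out with the multiplication of $B$ itself; diagonalizing, the $E^{op}\subset D$-action on each $B^{\ox_E n}$ is then the outer conjugation $v\cdot x=S(v_1)\,x\,v_2$ by the outer $E$-bimodule actions, since the intermediate conjugating factors land in $E$, slide across the balanced tensor products, and telescope away by the antipode axiom. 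Feeding these two facts into the defining formula $x\cdot(w\otimes w')=(r^j\cdot w)(r_j\cdot x)w'$ for the $E^{\unl{e}}$-action, and telescoping once more via $w_1S(w_2)\otimes w_3=1\otimes w$, one gets $x\cdot(w\otimes w')=x(ww')$. A short check shows that $\mu\colon E^{\unl{e}}\to E$, $w\otimes w'\mapsto ww'$, is a surjective algebra homomorphism, and the computation just made says precisely that the $E^{\unl{e}}$-module structures on $B^{\ox_E n}$ and on $B$ are the pullbacks along $\mu$ of their right regular $E$-module structures.

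Granting this, the conclusion is immediate. For modules pulled back along a surjective algebra map $\mu\colon E^{\unl{e}}\twoheadrightarrow E$, an $E^{\unl{e}}$-linear map is the same thing as an $E$-linear map, so
\[
\uHom_E(B^{\ox_E n},B)=\Hom_{\mathrm{mod}E^{\unl{e}}}(B^{\ox_E n},B)=\Hom_{\mathrm{mod}E}(B^{\ox_E n},B),
\]
which is the claimed equality. (Alternatively, one can simply verify (\ref{eq:2}) directly from relation (\ref{eq:1}), by the same telescoping manipulations, without isolating $\mu$.)

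The step requiring genuine care is the middle one: verifying that the diagonal $D$-action on $B^{\ox_E n}$ really collapses to the outer conjugation $S(v_1)\,x\,v_2$, and keeping straight which of the subalgebras $E^{op}$ (with inverted antipode) or $E^\ast$ of $D$ is acting at each stage and with which coproduct and antipode. This collapse is exactly the place where the hypothesis $B=A\ast E$ enters --- it relies on the $E$-action on $B$ being inner conjugation by the subalgebra $E$ performed with $B$'s own multiplication, and on the coaction on $E$ being regular --- and for a general subalgebra $E\subset B$ in $YD^E_E$ neither holds, so the argument does not apply.
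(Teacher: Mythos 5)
Your proof is correct, and its computational heart coincides with the paper's: both arguments turn on the observation that, because the tensor powers are balanced over $E$, the diagonal adjoint action of $E^{op}\subset D$ on $B^{\ox_E n}$ collapses to the outer conjugation $w\cdot x=S(w_1)xw_2$, after which the antipode axiom telescopes the conjugating factors away, and on the dual-basis identity $(r^j\cdot w)\ox r_j=w_1\ox w_2$ coming from the regular coaction on $E\subset B$. Where you genuinely differ is in the packaging. The paper never isolates the multiplication map $\mu\colon E^{\unl{e}}\to E$: it takes an arbitrary right $E$-linear $f$, rewrites $wx=(S_E^{-1}(w_2)\cdot x)w_1$, inserts $w_1S_E(w_2)$, and verifies relation (\ref{eq:2}) directly, map by map. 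You instead prove the stronger, $f$-free statement that the right $E^{\unl{e}}$-module structures on $B^{\ox_E n}$ and on $B$ are the pullbacks along the surjection $\mu$ of the right regular $E$-structures (i.e.\ $x\cdot(w\ox w')=x(ww')$), from which the equality of hom-sets is formal. Your version buys a cleaner, reusable statement about the modules themselves and makes transparent exactly where the hypothesis $B=A\ast E$ enters (inner conjugation plus regular coaction); the paper's version is slightly more economical, manipulating only the one identity it needs and never pausing to check that $\mu$ is multiplicative. Both are complete proofs.
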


\begin{proof}
We let juxtaposition denote the multiplication in $B$, and the dot $\cdot$ denote the action of $E^{op}$ according to the Yetter-Drinfeld structure.  Take $D$ to be the double of $E$ so that $YD^E_E=D\mathrm{mod}$.  We verify that for any right $E$-linear $f:B^{\ox_E n}\to B$ the equation (\ref{eq:2}) will already be satisfied.
\par

Note that, since each tensor product is over $E$, we will have for any $w\in \G^{op}\subset D$ identities
\[
\ba{rl}
w\cdot (b_1\ox\dots\ox b_l)&=(S_E(w_1)b_1w_2)\ox (S_E(w_3) b_2 w_4)\ox\dots\ox (S_E(w_{2l-1})b_lw_{2l})\\
&=(S_E(w_1) b_1)\ox b_2\ox\dots\ox (b_l w_2)
\ea
\]
and hence, for any $w\in E^{op}\subset D$ and $x\in B^{\ox_E n}$,
\[
wx=(S^{-1}_E(w_2)\cdot x)w_1.
\]
It follows that for any right $E$-linear $f:B^{\ox_E n}\to B$ we have
\[
\ba{rll}
f(wx)&=f\big((S^{-1}_E(w_2)\cdot x)w_1\big)\\
&=f\big(S^{-1}_E(w_2)\cdot x\big)w_1 & \text{by right $E$-linearity}\\
&=w_1S_E(w_2)\Big(f\big(S^{-1}_E(w_4)\cdot x\big)w_3\Big)\\
&=w_1\Big(S_E(w_2)f\big(S^{-1}_E(w_4)\cdot x\big)w_3\Big)\\
&=w_1\Big(w_2\cdot \big(f\big(S^{-1}_E(w_3)\cdot x\big)\big)\Big)\\
&=w_1\big(w_2\cdot f\big)(x)
\ea
\]
Now if we view $E$ as a subcomodule in $B$ then $w_1\ox w_2=w_0\ox w_1=r^j\cdot w\ox r_j$ and the final equation above becomes
\[
f(wx)=(r^j\cdot w)\big(r_j\cdot f\big)(x).
\]
Whence (\ref{eq:2}) is satisfied.
\end{proof}

By the lemma, when $B=A\ast\G$, the complex $C^\bullet_{c,E}(B)$ now appears as
\[
\ba{l}
C^\bullet_{c,E}(B)\\
=0\to \Hom_{\mathrm{mod}E}(E,B)\overset{d_c^0}\to \Hom_{\mathrm{mod}E}(B,B)\overset{d_c^1}\to \Hom_{\mathrm{mod}E}(B^{\ox_E 2},B)\overset{d_c^2}\to\cdots.
\ea
\]
The complex $C^\bullet_{c,E}(B)$ in this case is exactly the ``intermediate complex" of~\cite[Ch. 3]{negronthesis}.\footnote{At the point of~\cite{negronthesis} the relation with braided cohomology was still obscured.}
\par

In the following proposition the notation $C^\bullet(A,B)$ denotes the standard, i.e. non-braided, Hochschild cochain complex of $A$ with coefficients in $B$.  Note that restricting along the embeddings $A^{\ox n}\to B^{\ox_E n}$ produces vector space maps
\[
C_{c,E}^n(B)=\uHom_{E}(B^{\ox_E n},B)\to \Hom_k(A^{\ox n},B)=C^n(A,B)
\]
in each degree $n$.

\begin{proposition}\label{prop:relVHH}
Restriction provides an isomorphism of chain complexes $C_{c,E}^\bullet(B)\overset{\cong}\to C^\bullet(A,B)$ and subsequent isomorphism of cohomologies $H_{c,E}^\bullet(B)\overset{\cong}\to HH^\bullet(A,B)$.
\end{proposition}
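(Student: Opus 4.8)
The plan is to prove the statement degree by degree: first identify the underlying graded vector spaces using that $B=A\ast E$ is free as a one-sided $E$-module, and then check that restriction intertwines the two differentials. A bijective cochain map is automatically an isomorphism of complexes and induces an isomorphism on cohomology, so the cohomology statement will follow formally.

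For the vector-space identification I would argue as follows. Multiplication $A\ox E\to B$ is a bijection and is visibly right $E$-linear (since $E\subset B$ is a subalgebra), so $B$ is free as a right $E$-module. For any $E$-bimodule $M$ this gives a right $E$-linear isomorphism $B\ox_E M\cong A\ox_k M$, and iterating over $E$ yields right $E$-module isomorphisms $B^{\ox_E n}\cong A^{\ox n}\ox E$, with $E$ acting on the last tensor factor, under which the canonical embedding $A^{\ox n}\hookrightarrow B^{\ox_E n}$ becomes $A^{\ox n}\ox 1$. Since $A^{\ox n}\ox E$ is free as a right $E$-module on $A^{\ox n}\ox 1$, a right $E$-linear map out of it is exactly an arbitrary $k$-linear map out of $A^{\ox n}$. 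By the lemma preceding the proposition, $C^n_{c,E}(B)=\uHom_E(B^{\ox_E n},B)=\Hom_{\mathrm{mod}E}(B^{\ox_E n},B)$ is precisely the space of right $E$-linear maps, so restriction along $A^{\ox n}\hookrightarrow B^{\ox_E n}$ is a bijection onto $\Hom_k(A^{\ox n},B)=C^n(A,B)$. In degree $0$ one uses instead $\uHom_E(E,B)=\Hom_{\mathrm{mod}E}(E,B)\cong B$ via $f\mapsto f(1)$, matching $C^0(A,B)=B$.

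Next I would verify that restriction is a cochain map, i.e. that $d_c$ of (\ref{eq:dR}) restricts to the ordinary Hochschild differential of $A$ with coefficients in the $A$-bimodule $B$. Evaluating $d_c(f)$ on a tensor $a\ox y\ox a'$ of elements of $A$, the term $f\big(d_{\mathscr{B}B}(a\ox y\ox a')\big)$ only multiplies adjacent tensor factors, which stay in $A$ since $A\subset B$ is a subalgebra, so it is the bar differential of $A$; and the term $f(a\ox y)a'$ is just the right $A$-action on $B$. The one term requiring an argument is the braided term $(r^j\cdot a)\big(r_j\cdot f\big)(y\ox a')$. Here I would invoke the description from the proof of Proposition \ref{prop:129}(2): this is the image of $f\ox a\ox(y\ox a')$ under $c_{\uHom(B^{\ox n},B),B}\ox\mathrm{id}$ followed by pairing and multiplication. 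For $\mathscr{Z}=YD^E_E$ the braiding is $c_{M,B}(u\ox b)=b_0\ox(u\cdot b_1)$, and for $b=a\in A=B^{coE}$ the coaction is trivial, $\rho(a)=a\ox 1$, so $c_{\uHom(B^{\ox n},B),B}(f\ox a)=a\ox f$. Hence the braided term collapses to the plain left $A$-action $a\cdot f(y\ox a')$, and $(-1)^{|f|+1}d_c(f)$ restricted to tensors of elements of $A$ becomes exactly the usual Hochschild coboundary $a\,f(y\ox a')+f\big(d_{\mathscr{B}B}(a\ox y\ox a')\big)\pm f(a\ox y)\,a'$, the uniform sign $(-1)^{|f|+1}$ being absorbed into the standard sign normalization of $C^\bullet(A,B)$. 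The degree-$0$ case is identical and gives $a\mapsto a\,f(1)-f(1)\,a$.

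The main obstacle is precisely this last point: confirming that the $R$-matrix (equivalently, Yetter--Drinfeld) twist in $d_c$ disappears on restriction to $A$-tensors. This is exactly where the particular Yetter--Drinfeld structure on $A\ast E$ matters — the one for which $A$ is the subalgebra of coinvariants — and it is what forces the relative braided complex to collapse onto the genuine Hochschild complex of $A$ rather than onto some twisted variant. Once the cochain-map property is in place, the induced isomorphism $H^\bullet_{c,E}(B)\overset{\cong}\to HH^\bullet(A,B)$ is immediate.
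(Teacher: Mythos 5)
Your proposal is correct and follows essentially the same route as the paper: the graded identification comes from the right $E$-module isomorphism $A^{\ox n}\ox E\cong B^{\ox_E n}$ (so right $E$-linear maps out of $B^{\ox_E n}$ are exactly $k$-linear maps out of $A^{\ox n}$), and the braided term in $d_c$ collapses to the plain left $A$-action because elements of $A=B^{coE}$ are coinvariant, which is precisely the paper's "by coinvariance of elements in $A$." Your write-up simply makes explicit the two steps the paper states in compressed form.
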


\begin{proof}
The fact that the restriction maps are isomorphisms follows immediately from the fact that the inclusion $A^{\ox n}\ox E=A^{\ox (n-1)}\ox B\to B^{\ox_E n}$ is an isomorphism of right $E$-modules.  As for the differential we note that, by coinvariance of elements in $A$ and the formula for the differential $d_c$ given at (\ref{eq:dR}), we have
\[
\ba{l}
(-1)^{n+1}d_c(f)(a_1\ox\dots\ox a_{n+1})\\
=a_1f(a_2\ox\dots\ox a_{n+1})+(-1)^{n+1}f(a_1\ox\dots a_{n})a_{n+1}\\
+\sum_{i=1}^{n} (-1)^if(a_1\ox\dots a_ia_{i+1}\ox\dots\ox a_{n+1})
\ea
\]
for any degree $n$ function $f\in C^n_c(B)$ and $a_i\in A\subset B$.  This is exactly the non-braided Hochschild differential.
\end{proof}

As a consequence of the above isomorphism, and the fact that $C^\bullet_{c,E}(B)$ is a complex in $YD^E_E$, we see that the Hochschild complex $C^\bullet(A,B)$ is a complex in $YD^E_E$.  We see also that the cohomology is a graded object in $YD^E_E$.  
\par

One can check easily that the right $E$-action implicit in this Yetter-Drinfeld structure is exactly the right action considered in~\cite{stefan,N2}.  The coaction is induced by the coaction on $B=A\ast E$.  Namely, by coinvariance of the elements in $A$, we will have for any $f\in C^\bullet(A,B)$, $\xi\in E^\ast$, and $x\in \mathscr{B}A$
\[
\big(\xi\cdot f\big)(x)=\xi_1\cdot \big(f(S(\xi_2)\cdot x)\big)=\xi\cdot \big(f(x)\big).
\]
Written another way, the coproduct $f_0\ox f_1$ of $f$ is the unique element so that
\[
f_0(x)\ox f_1=\big(f(x)\big)_0\ox\big(f(x)\big)_1
\]
for each $x\in \mathscr{B}A$.  In the case of a finite group ring $G$, for example, the $G$-coaction on $C^\bullet(A,B)=C^\bullet(A,A\ast G)$ corresponds to the obvious $G$-grading
\[
C^\bullet(A,A\ast G)=C^\bullet(A,\bigoplus_{g\in G}Ag)=\bigoplus_{g\in G} C^\bullet(A,Ag).
\]
\par

We now want to address the question of when $H^\bullet(A,B)$ is equal to the non-relative braided Hochschild cohomology.  In light of Proposition \ref{prop:relVreg}, it suffices to understand when $E$ is separable in $YD^E_E$.  We will see that this occurs when $E$ is both semisimple and cosemisimple.

\begin{lemma}
The vector space equality $E=E^{\unl{op}}$ is an equality of algebras in $YD^E_E$.
\end{lemma}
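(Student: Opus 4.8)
The plan is to reduce the assertion to a short Sweedler computation. Since the braided opposite algebra $E^{\unl{op}}$ is defined to have the very same underlying object of $YD^E_E$ as $E$ (the same adjoint action, the same regular coaction, and the same unit), with only the multiplication altered, it suffices to check that the multiplication of $E^{\unl{op}}$ coincides with that of $E$. Recalling the definition $b\cdot_{\unl{op}}b':=(r^j\cdot b')(r_j\cdot b)$ and comparing with the braiding $c_{M,N}(m\ox n)=(r^j\cdot n)\ox(r_j\cdot m)$ of $\mathscr{Z}=H\mathrm{mod}$ (equivalently, with the explicit $YD^E_E$-braiding $c_{M,N}(m\ox n)=n_0\ox(m\cdot n_1)$ recorded in Section \ref{sect:smashprud}), one sees that the multiplication of $E^{\unl{op}}$ is precisely the composite $m_E\circ c_{E,E}$, where $m_E$ denotes the multiplication of $E$. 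So the statement to prove is the identity $m_E\circ c_{E,E}=m_E$ of morphisms $E\ox E\to E$ in $YD^E_E$.

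Next I would compute $c_{E,E}$ explicitly. The subalgebra $E\subset A\ast E$ carries the right adjoint action $w\cdot w'=S(w'_1)ww'_2$ and the regular right coaction $\rho(w)=w_1\ox w_2$, so the $YD^E_E$-braiding gives
\[
c_{E,E}(w\ox w')=w'_1\ox (w\cdot w'_2)=w'_1\ox S(w'_2)\,w\,w'_3,
\]
after expanding the adjoint action and applying coassociativity. Composing with $m_E$ and then using $\sum w'_1S(w'_2)\ox w'_3=1\ox w'$ (the antipode axiom applied to the first two Sweedler legs), one obtains $w'_1S(w'_2)ww'_3=ww'$. Hence $m_E\circ c_{E,E}=m_E$, which is exactly the claim that $E^{\unl{op}}$ and $E$ have the same multiplication; together with the observation above that all the remaining structure is literally identical, this yields the equality $E=E^{\unl{op}}$ of algebras in $YD^E_E$.

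There is essentially no obstacle here: the content is just the three-line Sweedler manipulation above. The one point requiring a little care is matching the conventions for the $R$-matrix of the Drinfeld double with the $YD^E_E$-braiding $c_{M,N}(m\ox n)=n_0\ox(m\cdot n_1)$ from Section \ref{sect:smashprud}, i.e. confirming that $(r^j\cdot b')(r_j\cdot b)$ really unwinds to $m_E\circ c_{E,E}$ rather than to $m_E\circ c_{E,E}^{-1}$. I would also remark in passing that the identity $m_E\circ c_{E,E}=m_E$ is precisely braided commutativity of $E$ as an object of $YD^E_E$, generalizing the familiar fact that $kG$ with the conjugation action is braided commutative in $YD^G_G$.
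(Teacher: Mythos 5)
Your proof is correct and follows essentially the same route as the paper: both reduce the claim to checking that the $\unl{op}$-multiplication $(r^j\cdot w')(r_j\cdot w)$ unwinds, via the Yetter--Drinfeld braiding $c_{E,E}(w\ox w')=w'_1\ox S(w'_2)ww'_3$, to $w'_1S(w'_2)ww'_3=ww'$ by the antipode axiom. The only (harmless) addition is your closing remark interpreting the identity as braided commutativity of $E$ in $YD^E_E$.
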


\begin{proof}
The equality is already an identification in $YD^E_E$, so we need only deal with the multiplication.  We denote the adjoint action of $E^{op}\subset D$ by a dot $\cdot$, and multiplication by juxtaposition.  For any $w,w'\in E$,
\[
w\cdot_{\unl{op}}w'=(w'_1)(w_2'\cdot w)=(w'_1)(S(w_2')ww_3')=(w'_1S(w_2'))ww_3'=ww',
\]
and we are done.
\end{proof}

\begin{lemma}\label{lem:510}
If $E$ is semisimple and cosemisimple then the algebra $E^{\unl{e}}$ is separable in $YD^E_E$.
\end{lemma}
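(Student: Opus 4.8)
The plan is to reduce, via Lemma~\ref{lem:semsim}(2), to an ordinary-algebra semisimplicity statement, and then to prove that statement by recognizing the relevant braided tensor powers of $E$ as cocycle twists of ordinary tensor powers. First I would record that, since $E$ is semisimple and cosemisimple, its Drinfeld double $D$ is semisimple: a two-sided integral for $D$ is built from normalized integrals of $E$ and of $E^\ast$, so $\epsilon_D$ is nonzero on it (see, e.g.,~\cite{EGNO}). Hence $\mathscr{Z}=YD^E_E=D\mathrm{mod}$ is a semisimple category. Applying Lemma~\ref{lem:semsim}(2) to the algebra $E^{\unl{e}}$ of $\mathscr{Z}$ in place of $E$, we see that $E^{\unl{e}}$ is separable in $\mathscr{Z}$ as soon as $\big(E^{\unl{e}}\big)^{\unl{e}}$ is semisimple as an ordinary $k$-algebra. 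So the lemma comes down to this ordinary-algebra claim.

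Next I would simplify $\big(E^{\unl{e}}\big)^{\unl{e}}$. By the preceding lemma, $E^{\unl{op}}=E$ as algebras in $\mathscr{Z}$, so $E^{\unl{e}}=E\,\unl{\ox}\,E$; combining this with the natural isomorphisms $(X\,\unl{\ox}\,Y)^{\unl{op}}\cong Y^{\unl{op}}\,\unl{\ox}\,X^{\unl{op}}$ and the associativity of $\unl{\ox}$ gives $\big(E^{\unl{e}}\big)^{\unl{e}}\cong E^{\unl{\ox} 4}$. Thus it suffices to show that each braided tensor power $E^{\unl{\ox} m}$ is semisimple as an ordinary $k$-algebra.

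For that last point the idea is that $E^{\unl{\ox} m}$ is a Drinfeld cocycle twist of the ordinary tensor power $E^{\otimes m}$. The algebra $E^{\otimes m}$ is a module algebra over $D^{\otimes m}$, with the $i$-th copy of $D$ acting on the $i$-th tensorand, and the braided product on $E^{\unl{\ox} m}$ is obtained from the ordinary product on $E^{\otimes m}$ by inserting copies of the $R$-matrix of $D$ between neighbouring tensorands; the braid relations~(\ref{eq:braidrels}) are exactly the statement that the resulting element $F\in D^{\otimes m}\otimes D^{\otimes m}$ is a $2$-cocycle, so that $D^{\otimes m}$ twisted by $F$ is again a Hopf algebra $(D^{\otimes m})^F$ — with the same underlying algebra, counit, and Maschke integral as $D^{\otimes m}$ — over which $E^{\unl{\ox} m}$ is a module algebra. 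Since $E$ is a semisimple Hopf algebra, the Maschke condition $\epsilon(\Lambda)\neq 0$ for a left integral persists under arbitrary field extension, so $E$, hence $E^{\otimes m}$, is a separable $k$-algebra; and $D^{\otimes m}$ is semisimple because $D$ is. Therefore $E^{\otimes m}\# D^{\otimes m}$ is semisimple, and the tensor equivalence $D^{\otimes m}\mathrm{mod}\simeq (D^{\otimes m})^F\mathrm{mod}$ — which carries $E^{\otimes m}$ to $E^{\unl{\ox} m}$ — induces a Morita equivalence $E^{\otimes m}\# D^{\otimes m}\sim E^{\unl{\ox} m}\#(D^{\otimes m})^F$, so the latter smash product is semisimple as well. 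Since $(D^{\otimes m})^F$ is a semisimple Hopf algebra, semisimplicity of $E^{\unl{\ox} m}\#(D^{\otimes m})^F$ forces semisimplicity of $E^{\unl{\ox} m}$, which is what was needed.

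I expect the last paragraph to be the main obstacle. The routine-but-genuine work there is: pinning down the cocycle $F$ and checking the $2$-cocycle identity directly from the braid relations~(\ref{eq:braidrels}); recalling that cocycle twisting of a module algebra leaves the Morita class of the associated smash product unchanged; and invoking the standard fact that if $H$ is a semisimple Hopf algebra then semisimplicity of a smash product $A\#H$ descends to $A$. Everything else is bookkeeping.
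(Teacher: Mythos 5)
Your reduction via Lemma~\ref{lem:semsim}(2) and the identification of $E^{\unl{\ox} m}$ as a cocycle twist of $E^{\ox m}$ are both sound, but the final step rests on a ``standard fact'' that is false: semisimplicity of a smash product $A\# H$ does \emph{not} descend to $A$ when $H$ is merely semisimple. For a counterexample, let $\mathrm{char}(k)=p$ divide $|G|$ and let $H=k^G$ (semisimple, being a product of copies of $k$) act on $A=kG$ through the $G$-grading; Blattner--Montgomery duality gives $kG\# k^G\cong \End_k(kG)\cong M_{|G|}(k)$, which is semisimple, while $kG$ is not. The correct descent statement needs $H$ \emph{cosemisimple}: one applies the duality $(A\# H)\# H^\ast\cong M_{\dim H}(A)$ together with the Cohen--Fishman Maschke theorem for the semisimple Hopf algebra $H^\ast$. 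So to close your argument you would additionally have to show that $(D^{\ox m})^F$ is cosemisimple --- i.e.\ that $D$ is cosemisimple and that cosemisimplicity survives the Drinfeld twist $F$ --- none of which is routine bookkeeping. (A second, smaller issue: you read the statement as separability of the algebra $E^{\unl{e}}$, which routes you through $(E^{\unl{e}})^{\unl{e}}\cong E^{\unl{\ox}4}$; what the paper's proof actually establishes, and what Theorem~\ref{thm:relVreg} consumes via Proposition~\ref{prop:relVreg}, is semisimplicity of $E^{\unl{e}}$ as an ordinary $k$-algebra, i.e.\ separability of $E$ itself in $YD^E_E$. The $m=2$ case of your claim is all that is ever used.)

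The paper avoids descent entirely, and you can too. After reducing, exactly as you do, to showing that $E^{\unl{e}}=E\unl{\ox}E$ is semisimple as an ordinary algebra, it computes the multiplication explicitly: $(a\ox b)(a'\ox b')=aa'_1\ox (S(a'_2)ba'_3)b'$, which exhibits $E\unl{\ox}E$ as the smash product $E_1\ast E_2$ of $E$ with itself under the adjoint action. Then $\Hom_{\mathrm{mod}E^{\unl{e}}}(M,-)=\Hom_{\mathrm{mod}E_2}(M,-)^{E_1}$ is exact by semisimplicity of $E$ applied twice, only ever in the ``ascending'' Maschke direction where the Cohen--Fishman theorem genuinely applies; hence every $E^{\unl{e}}$-module is projective and Lemma~\ref{lem:semsim}(2) finishes. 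In other words, the single observation that the braided square of $E$ is itself a smash product replaces your entire twist--Morita--descend chain and removes the problematic step.
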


\begin{proof}
Since $E$ is both semisimple and cosemisimple the double $D$ is semisimple (see e.g.~\cite{M}).  Hence $YD^E_E=D\mathrm{mod}$ is semisimple.  It therefore suffices to establish semisimplicity of the braided enveloping algebra $E^{\unl{e}}$, by Lemma \ref{lem:semsim}.  By the previous lemma we have $E^{\unl{e}}=E\unl{\ox}E$.  Let $E_1$ and $E_2$ denote the left and right copies of $E$ in the product $E\unl{\ox}E$.  Then, by checking the multiplication
\[
(a\ox b)(a'\ox b')=a(a'_1)\ox (a'_2\cdot b)b'=a(a'_1)\ox (S(a'_2)ba'_3)b'
\]
we see that $E_1\unl{\ox}E_2$ is the smash product $E_1\ast E_2$, where $E_2$ is a right $E_1$-module algebra under the adjoint action.  Now we have the standard identification of maps over the smash product as the invariants
\[
\Hom_{\mathrm{mod}E^{\unl{e}}}(-,-)=\Hom_{\mathrm{mod}E_1\ast E_2}(-,-)=\Hom_{\mathrm{mod}E_2}(-,-)^{E_1}.
\]
Semisimplicity of $E=E_1=E_2$ then implies exactness of the functor
\[
\Hom_{\mathrm{mod}E^{\unl{e}}}(M,-)=\Hom_{\mathrm{mod}E_2}(M,-)^{E_1}
\]
for arbitrary $M$ in $\mathrm{mod}E^{\unl{e}}$.  So all objects in $\mathrm{mod}E^{\unl{e}}$ are projective, and $E^{\unl{e}}$ is semisimple.
\end{proof}

Lemma \ref{lem:510} in conjunction with Proposition \ref{prop:relVreg} then gives

\begin{theorem}\label{thm:relVreg}
Suppose that $E$ is semisimple and cosemisimple, and $B=A\ast E$.  Then there is a canonical isomorphism between the $E$-relative braided cohomology and non-relative braided cohomology $H_{c,E}^\bullet(B)\cong H_c^\bullet(B)$ in $\mathrm{dg} YD^E_E$.
\end{theorem}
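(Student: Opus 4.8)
The plan is to deduce Theorem~\ref{thm:relVreg} directly from Proposition~\ref{prop:relVreg}, so that the only real task is to verify its hypothesis --- separability of $E$ in $\mathscr{Z}=YD^E_E$ --- under the assumptions that $E$ is semisimple and cosemisimple. I would begin by recording that $\mathscr{Z}$ is itself semisimple in this case: the Drinfeld double $D$ of $E$ is semisimple whenever $E$ is both semisimple and cosemisimple (as noted in the proof of Lemma~\ref{lem:510}), and $YD^E_E=D\mathrm{mod}$.

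Next I would invoke Lemma~\ref{lem:510}: together with Lemma~\ref{lem:semsim}(2) it gives that the subalgebra $E\subset B$ is separable in $\mathscr{Z}$. The substance of that step is the identification of the braided enveloping algebra $E^{\unl{e}}=E\,\unl{\ox}\,E$ with a smash product $E_1\ast E_2$ of two copies of $E$ (under the adjoint action), whence $E^{\unl{e}}$ is semisimple as an ordinary $k$-algebra once $E$ is semisimple; combined with semisimplicity of $\mathscr{Z}$, Lemma~\ref{lem:semsim}(2) then yields separability. Note that both hypotheses on $E$ are used: cosemisimplicity (with semisimplicity) is what makes $\mathscr{Z}$ semisimple, while semisimplicity alone handles $E^{\unl{e}}$.

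With $E$ separable in $\mathscr{Z}$, Proposition~\ref{prop:relVreg}(1) gives that the canonical embedding of cochain complexes $C^\bullet_{c,E}(B)\hookrightarrow C^\bullet_c(B)$ is a quasi-isomorphism --- it is a map of complexes in $\mathscr{Z}$ by Lemma~\ref{lem:309} --- and part~(2) upgrades this to an isomorphism $H^\bullet_{c,E}(B)\cong H^\bullet_c(B)$ in $\mathrm{dg}\mathscr{Z}=\mathrm{dg}YD^E_E$. Canonicity is automatic, since this morphism is induced by the projection $\mathscr{B}ar^cB\to\mathscr{B}ar^c_EB$ of bar resolutions, which involves no choices. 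I do not expect a genuine obstacle here: the proof is an assembly of Lemma~\ref{lem:510} and Proposition~\ref{prop:relVreg}, and the only point demanding care is the bookkeeping of precisely which separability statement Proposition~\ref{prop:relVreg} needs and the verification that the earlier results supply it.
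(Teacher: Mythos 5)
Your proposal is correct and follows exactly the paper's own route: the paper proves Theorem~\ref{thm:relVreg} by combining Lemma~\ref{lem:510} (whose proof establishes semisimplicity of $YD^E_E$ via the double and semisimplicity of $E^{\unl{e}}=E_1\ast E_2$, hence separability of $E$ via Lemma~\ref{lem:semsim}(2)) with Proposition~\ref{prop:relVreg}. Your bookkeeping of which hypothesis feeds which step matches the paper precisely.
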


The following Corollary was proved in the unpublished work of Schedler and Witherspoon in the case of a finite group $E=kG$ in characteristic $0$~\cite{schedlerwitherspoon}.  The methods employed in~\cite{schedlerwitherspoon} were quite different.

\begin{corollary}(Generalized Schedler-Witherspoon Isomorphism)\label{cor:HcVHH}
When $E$ is semisimple and cosemisimple, and $B=A\ast E$, there is a canonical isomorphism $HH^\bullet(A,B)\cong H^\bullet_c(B)$.
\end{corollary}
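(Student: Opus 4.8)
The plan is to simply chain together the two main results established immediately above. The corollary asserts an isomorphism $HH^\bullet(A,B)\cong H_c^\bullet(B)$, and both sides have already been connected to the $E$-relative braided cohomology $H^\bullet_{c,E}(B)$, so the proof amounts to composing those identifications.

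First I would invoke Proposition \ref{prop:relVHH}: when $B=A\ast E$, restriction along the embeddings $A^{\ox n}\hookrightarrow B^{\ox_E n}$ gives an isomorphism of cochain complexes $C^\bullet_{c,E}(B)\overset{\cong}\to C^\bullet(A,B)$, hence an isomorphism $H^\bullet_{c,E}(B)\overset{\cong}\to HH^\bullet(A,B)$ of graded objects. Next I would invoke Theorem \ref{thm:relVreg}, which under the hypothesis that $E$ is semisimple and cosemisimple — this being precisely where the assumption is used, via Lemma \ref{lem:510} (giving $E^{\unl{e}}$ separable in $YD^E_E$) and Proposition \ref{prop:relVreg} — supplies the canonical isomorphism $H^\bullet_{c,E}(B)\cong H^\bullet_c(B)$ in $\mathrm{dg}\,YD^E_E$. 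Composing the inverse of the first isomorphism with the second yields $HH^\bullet(A,B)\cong H^\bullet_c(B)$, as claimed.

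There is no real obstacle here: the corollary is a formal consequence of the preceding theorem and proposition, and its only content beyond bookkeeping is to observe that the two independent comparisons with $H^\bullet_{c,E}(B)$ are compatible, which is automatic since both are realized through the single complex $C^\bullet_{c,E}(B)=\Hom_{\mathrm{mod}B^{\unl{e}}}(\mathscr{B}ar^c_EB,B)$. I would also note that the resulting isomorphism is an isomorphism of graded objects in $YD^E_E$, since each constituent map is; the ring-theoretic refinement (that it is an \emph{algebra} isomorphism, as stated in Theorem \ref{thm:01}) is deferred to the later sections where the ring structures are set up.

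\begin{proof}
By Proposition \ref{prop:relVHH} restriction induces an isomorphism $H^\bullet_{c,E}(B)\overset{\cong}\to HH^\bullet(A,B)$. Since $E$ is semisimple and cosemisimple, Theorem \ref{thm:relVreg} gives a canonical isomorphism $H^\bullet_{c,E}(B)\cong H^\bullet_c(B)$ in $\mathrm{dg}\,YD^E_E$. Composing these two identifications yields the desired canonical isomorphism $HH^\bullet(A,B)\cong H^\bullet_c(B)$.
\end{proof}
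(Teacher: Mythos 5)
Your proposal is correct and is essentially identical to the paper's own proof, which likewise composes the restriction isomorphism $HH^\bullet(A,B)\cong H_{c,E}^\bullet(B)$ from Proposition \ref{prop:relVHH} with the isomorphism $H_{c,E}^\bullet(B)\cong H_c^\bullet(B)$ of Theorem \ref{thm:relVreg}. Your added remarks on where semisimplicity and cosemisimplicity enter, and on deferring the algebra structure, are accurate but not needed for the argument.
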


\begin{proof}
By Proposition \ref{prop:relVHH} and Theorem \ref{thm:relVreg} we have $HH^\bullet(A,B)\cong H_{c,E}^\bullet(B)\cong H_c^\bullet(B)$.
\end{proof}

As noted above, the Yetter-Drinfeld structure on $HH^\bullet(A,B)$ induced by this isomorphism is the expected one, with the action from~\cite{stefan,N2} and coaction induced from $B=A\ast E$.  As a consequence of the above results and \c{S}tefan's spectral sequence we also have

\begin{corollary}
When $E$ is semisimple and cosemisimple there is a vector space identification $HH^\bullet(B)=H_c^\bullet(B)^E$ and for general $E$ there is a spectral sequence
\[
\Ext_{\mathrm{mod}E}\big(k,H_{c,E}^\bullet(B)\big)\Rightarrow HH^\bullet(B)
\]
\end{corollary}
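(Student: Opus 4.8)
The plan is to derive both statements from \c{S}tefan's spectral sequence together with the identifications already obtained in this section. Recall that, for the finite dimensional Hopf algebra $E$, the smash product $B=A\ast E$ is a crossed product over $A$ with trivial cocycle, hence a cleft $E$-Galois extension of $A$ (with coaction $\rho(aw)=aw_1\ox w_2$ and coinvariants $A=B^{coE}$). \c{S}tefan's theorem~\cite{stefan} therefore supplies a first-quadrant cohomological spectral sequence
\[
E_2^{p,q}=\Ext^p_{\mathrm{mod}E}\big(k,HH^q(A,B)\big)\ \Longrightarrow\ HH^{p+q}(B),
\]
where $HH^\bullet(A,B)$ is equipped with the natural $E$-module structure of~\cite{stefan,N2}. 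Since the spectral sequence is concentrated in the first quadrant, it converges with no convergence hypotheses needed.

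I would first feed Proposition~\ref{prop:relVHH}, together with the discussion immediately following it, into this spectral sequence: that discussion identifies $HH^\bullet(A,B)$ with $H_{c,E}^\bullet(B)$ as a graded vector space and, crucially, matches the \c{S}tefan $E$-action on $HH^\bullet(A,B)$ with the right $E$-action implicit in the Yetter--Drinfeld structure on $C^\bullet_{c,E}(B)$. Substituting this identification into the $E_2$-page produces exactly the asserted spectral sequence $\Ext_{\mathrm{mod}E}\big(k,H_{c,E}^\bullet(B)\big)\Rightarrow HH^\bullet(B)$, valid for arbitrary finite dimensional $E$. For the first assertion, assume in addition that $E$ is semisimple and cosemisimple. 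Semisimplicity of $E$ makes $\mathrm{mod}E$ a semisimple category, so $\Ext^p_{\mathrm{mod}E}(k,-)$ vanishes for $p>0$ and the spectral sequence collapses onto its bottom row $p=0$, giving a graded vector space identification $HH^\bullet(B)=\Ext^0_{\mathrm{mod}E}\big(k,HH^\bullet(A,B)\big)=HH^\bullet(A,B)^E$. Finally I would transport this along Corollary~\ref{cor:HcVHH}: that isomorphism $HH^\bullet(A,B)\cong H_c^\bullet(B)$ is assembled from Proposition~\ref{prop:relVHH} and Theorem~\ref{thm:relVreg}, hence is an isomorphism in $\mathrm{dg}YD^E_E$ and in particular is $E$-equivariant, so applying the invariants functor $(-)^E$ to both sides yields $HH^\bullet(B)=HH^\bullet(A,B)^E=H_c^\bullet(B)^E$.

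The one genuine subtlety, and the step I would be most careful with, is the bookkeeping of the $E$-module structures and left/right conventions: one must be sure the $E$-action entering \c{S}tefan's $E_2$-page is literally the action coming from the Yetter--Drinfeld structure on $C^\bullet_{c,E}(B)$, since otherwise the degree-$0$ term $HH^\bullet(A,B)^E$ need not be $H_c^\bullet(B)^E$. This is precisely the compatibility already recorded after Proposition~\ref{prop:relVHH} (and it is harmless to swap $E$ for $E^{op}$ at the level of $\Ext(k,-)$, since $E$ has bijective antipode), so the argument reduces to invoking that remark; no new computation is required. Everything else — the collapse of a first-quadrant spectral sequence over a semisimple ring, and the $E$-equivariance of the Schedler--Witherspoon isomorphism — is formal.
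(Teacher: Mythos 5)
Your proposal is correct and follows essentially the same route as the paper: the paper's proof simply cites \c{S}tefan's spectral sequence $\Ext_{\mathrm{mod}E}(k,HH^\bullet(A,B))\Rightarrow HH^\bullet(B)$ and then invokes Proposition \ref{prop:relVHH} and Theorem \ref{thm:relVreg}, exactly the two identifications you use. Your version merely spells out the collapse over a semisimple $E$ and the equivariance bookkeeping that the paper leaves implicit.
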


\begin{proof}
\c{S}tefan's spectral sequence~\cite{stefan} appears as
\[
\Ext_{\mathrm{mod}E}\big(k,HH^\bullet(A,B)\big)\Rightarrow HH^\bullet(B).
\]
So the result follows from Proposition \ref{prop:relVHH} and Theorem \ref{thm:relVreg}.
\end{proof}

\subsection{An application to deformation theory}\label{sect:defthry}

Take $B=A\ast E$.  For this subsection we would like to employ the normalized complex $\bar{C}^\bullet_{c,E}(B)$.  This consists of all maps in the complex $C^\bullet_{c,E}(B)$ which vanish on any monomial $b_1\ox\dots\ox b_n$ with $b_i=1$ for some $i$.  The inclusion $\bar{C}^\bullet_{c,E}(B)\to C^\bullet_{c,E}(B)$ will be a quasi-isomorphism, for the same reason as it is the case in the non-braided setting~\cite{Gi,gerstenhaberschack86}.  Namely, the kernel of the projection $\mathscr{B}ar^c_EB\to \bar{\mathscr{B}ar^c_E}B$, to which the embedding $\bar{C}^\bullet_{c,E}(B)\to C^\bullet_{c,E}(B)$ is dual, will (still) be contractible.
\par

In the following we take $L(B)$ to be the shifted, normalized, non-braided Hochschild complex $\Sigma \bar{C}^\bullet(B)$, with its usual dg Lie algebra structure, and we let $L(B;E)$ denote the $E$-invariants of the shifted, normalized, braided Hochschild complex $\Sigma \bar{C}^\bullet_c(B)^E$.  One can see easily that the differential on invariant (i.e. left $E$-linear) functions in $\bar{C}^\bullet_c(B)$ reduces to the usual, non-braided, Hochschild differential and that the inclusion
\[
\bar{C}^\bullet_c(B)^E\to \bar{C}^\bullet(B)
\]
induces a dg Lie structure on the shifted complex $L(B;E)$.  The inclusion identifies the invariant functions in $\bar{C}^\bullet_c(B)$ with left $E$-linear functions in $\bar{C}^\bullet(B)$.

\begin{theorem}\label{thm:defthry}
Suppose $E$ is both semisimple and cosemisimple and take $B=A\ast E$.  The inclusion $L(B;E)\to L(B)$ is a quasi-isomorphism of dg Lie algebras.
\end{theorem}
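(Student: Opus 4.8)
The plan is as follows. By construction (as recorded just above the statement) the inclusion $L(B;E)\to L(B)$ is already a morphism of dg Lie algebras, so it only remains to prove that it is a quasi-isomorphism. Since the normalized and unnormalized braided, relative-braided, and ordinary Hochschild complexes are all quasi-isomorphic---compatibly with restriction maps and with the functor $(-)^E$---I will work with the unnormalized complexes. As $E$ is semisimple, $(-)^E$ is exact, so $H^\bullet(L(B;E))=\Sigma\,H_c^\bullet(B)^E$ while $H^\bullet(L(B))=\Sigma\,HH^\bullet(B)$; thus the content of the theorem is that the map
\[
\beta\colon\ H_c^\bullet(B)^E\longrightarrow HH^\bullet(B)
\]
induced by the chain inclusion $C_c^\bullet(B)^E\hookrightarrow C^\bullet(B)$ is an isomorphism. (Recall this is a chain map precisely because $d_c$ restricts to the ordinary Hochschild differential on $E$-invariant cochains.)

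The idea is to factor $\beta$ through the $E$-relative braided complex, which was introduced for exactly this bridging purpose. I would fit $\beta$ into the commutative square
\[
\xymatrix{
H_{c,E}^\bullet(B)^E \ar[r]^{\alpha}\ar[d]_{\gamma} & H_c^\bullet(B)^E \ar[d]^{\beta}\\
HH^\bullet(A,B)^E & HH^\bullet(B)\ar[l]^{\delta}
}
\]
in which $\alpha$, $\gamma$, and $\delta$ are all isomorphisms. Here $\alpha$ is induced by the inclusion $C_{c,E}^\bullet(B)\hookrightarrow C_c^\bullet(B)$; it is an isomorphism by Theorem \ref{thm:relVreg} (applicable since $E$ is semisimple and cosemisimple) together with exactness of $(-)^E$. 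The map $\gamma$ is induced by the restriction isomorphism $C_{c,E}^\bullet(B)\overset{\cong}\to C^\bullet(A,B)$ of Proposition \ref{prop:relVHH}; as that isomorphism respects the Yetter--Drinfeld, hence $E$-module, structures, after taking $E$-invariants and cohomology it becomes an isomorphism onto $HH^\bullet(A,B)^E$. The map $\delta$ is the restriction map $HH^\bullet(B)\to HH^\bullet(A,B)$, which lands in the $E$-invariants; because $E$ is semisimple, \c{S}tefan's spectral sequence $\Ext_E(k,HH^\bullet(A,B))\Rightarrow HH^\bullet(B)$ degenerates and $\delta$ is an isomorphism onto $HH^\bullet(A,B)^E$ (this is the identification already used in the Corollary following Theorem \ref{thm:relVreg}). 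Finally, the square commutes: tracing both composites $H_{c,E}^\bullet(B)^E\to HH^\bullet(A,B)$ back to the cochain level, each is just restriction of a cochain along the embedding $A^{\ox n}\hookrightarrow B^{\ox_E n}$---for $\delta\circ\beta\circ\alpha$ one first views $f\colon B^{\ox_E n}\to B$ as a cochain on $B^{\ox n}$ and then restricts to $A^{\ox n}$, which again factors through $B^{\ox_E n}$. Hence $\delta\circ\beta\circ\alpha=\gamma$, and since $\alpha,\gamma,\delta$ are isomorphisms, so is $\beta$. This proves the theorem.

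The obstacle here is entirely bookkeeping rather than any new idea. The points that need care are: (i) confirming that \c{S}tefan's isomorphism $HH^\bullet(B)\cong HH^\bullet(A,B)^E$ is genuinely realized by the honest restriction map $\delta$ (i.e.\ is the edge homomorphism of his spectral sequence), so that it sits in the square above; (ii) checking that the Proposition \ref{prop:relVHH} isomorphism is compatible with the $E$-action, so that taking invariants yields an isomorphism onto $HH^\bullet(A,B)^E$ and not merely an injection into $HH^\bullet(A,B)$; and (iii) keeping straight the passage between normalized and unnormalized complexes, together with the fact (used implicitly above) that the braided differential restricted to $E$-invariant cochains is the usual Hochschild differential. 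Once the square is written out honestly at the level of cochain complexes---every arrow being a restriction map---its commutativity is immediate and the conclusion is formal.
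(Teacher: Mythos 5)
Your proposal is correct and follows the same overall strategy as the paper: reduce via exactness of $(-)^E$ and Theorem \ref{thm:relVreg} to the relative braided complex, identify its invariants with $E$-invariant cochains $C^\bullet(A,B)^E$ via Proposition \ref{prop:relVHH}, and then compare with the full Hochschild complex of $B$. The one place you diverge is the final comparison. The paper observes that $C^\bullet(A,B)^E$ is precisely the $E$-relative (non-braided) Hochschild complex $C^\bullet_E(B)$, i.e.\ the complex of $E$-bimodule maps $B^{\ox_E n}\to B$, and that the composite $C^\bullet(A,B)^E\to C^\bullet_c(B)^E\to C^\bullet(B)$ is the canonical inclusion $C^\bullet_E(B)\to C^\bullet(B)$; Gerstenhaber--Schack then gives the quasi-isomorphism directly at the cochain level, with no spectral sequence. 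You instead close the square with \c{S}tefan's spectral sequence, which forces you to know that the resulting isomorphism $HH^\bullet(B)\cong HH^\bullet(A,B)^E$ is realized by the honest restriction map (your point (i)). That edge-map identification is true and standard, but it is exactly the obligation the paper's route avoids, and you do not discharge it; if you want a self-contained argument you should either verify it or simply replace $\delta$ by the Gerstenhaber--Schack quasi-isomorphism, at which point your square collapses to the paper's one-line composite. The paper itself notes (in the proof of Corollary \ref{cor:749}(5)) that the two mechanisms are interchangeable, so nothing is lost either way.
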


Without getting completely sidetracked by the issue, the above corollary says that when $E$ is semisimple and cosemisimple the formal deformation theory of $B$ as an associative algebra, and $B$ as an $E$-module algebra, will be equivalent.  See~\cite{manetti05,KSdeformation,negronthesis,yau08,fregier09,lurie11,dolgushev15}.
\par

We note that our complex $L(B;E)$ representes the formal deformation theory of $B$ with {\it fixed} $E$-module structure.  However, since $E$ is semisimple $B$ will not be deformable as an $E$-module, so the distinction should be immaterial (see also~\cite{yau08}).

\begin{proof}[Proof of Theorem \ref{thm:defthry}]
In this proof we take $C^\bullet_\ast$ to mean the normalized complex, to ease notation.  We have the quasi-isomorphism of complexes $C^\bullet(A,B)=C^\bullet_{c,E}(B)\to C^\bullet_c(B)$ and taking invariants gives another quasi-isomorphism $C^\bullet(A,B)^E\to C^\bullet_c(B)^E$ (since $E$ is semisimple and the invariants functor is therefore exact).  So, to see that the shift of the map $C^\bullet_c(B)^E\to C^\bullet(B)$ is a quasi-isomorphism it suffices to show that the composite map $C^\bullet(A,B)^E\to C^\bullet_c(B)^E\to C^\bullet(B)$ is a quasi-isomorphism.
\par

But now $C^\bullet(A,B)^E$ is canonically identified with the relative Hochschild complex $C^\bullet_E(B)$.  Since $E$ is separable it then follows from~\cite{gerstenhaberschack86} that the inclusion
\[
\Sigma C^\bullet(A,B)^E=\Sigma C^\bullet_E(B)\to \Sigma C^\bullet(B)
\]
is in fact a quasi-isomorphism.  
\end{proof}

\subsection{Cohomology for crossed products and $J$-twists}
\label{sect:w_cocycle}

Take $E$ a finite dimensional Hopf algebra and $B=A\ast E$.  Let $J\in E^\ast\ox E^\ast$ be an invertible dual cocycle (or, {\it twist})~\cite{montgomery04}.  We can consider this element also as an element in the second tensor power of the double $D\ox D$.  For any such $J$, and (dg) algebra $\Omega$ in $YD^E_E=D\mathrm{mod}$ we can form the (dg) algebra $\Omega_J$ by altering the multiplication
\[
\omega\cdot_J\omega'=(J_l\cdot \omega)(J^l\cdot \omega'),
\]
where $J=J_l\ox J^l$.  (There is an implicit sum here $J=\sum_l J_l\ox J^l$.)  This new object will be a (dg) algebra in the category of modules over the twists $D^J$~\cite{montgomery04}, and the Hopf algebra $D^J$ will be quasitriangular with new $R$-matrix $R=J_{21}^{-1}RJ$~\cite{EGNO}.  So, the algebra $\Omega_J$ is an algebra in the braided category $\mathrm{dg}D^J\mathrm{mod}$.
\par

In the case that $E$ is cocommutative (e.g. a group algebra) we arrive at all crossed products $A\ast_\alpha E$, with $\alpha$ taking values in $k$, as $(A\ast E)_J$ where $J$ is the element identified with $\alpha$ under the isomorphism $E^\ast\ox E^\ast\cong (E\ox E)^\ast$.  So we see that many crossed products of interest live naturally in a braided monoidal categories as well.  For example, we can consider the motivating examples of~\cite{caldararuetal}.  I make the following claim.

\begin{claim}
Take $B=(A\ast E)$ and $J$ an invertible dual cocycle in $E^\ast\ox E^\ast\subset D\ox D$.  Then there is a natural identification
\[
HH^\bullet(A,B_J)=H^\bullet_{c,E_J}(B_J)
\]
and when $E$ is both semisimple and cosemisimple we get a natural isomorphism with the non-relative braided cohomology
\[
HH^\bullet(A,B_J)\cong H^\bullet_c(B_J).
\]
Whence the Hochschild cohomology has the structure of an object in $\mathrm{dg}D^J\mathrm{mod}$.
\end{claim}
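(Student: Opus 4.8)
The plan is to run the argument of Section~\ref{sect:id_w_hh} essentially verbatim, with the twisted data $(B_J,E_J,D^J)$ in place of $(B,E,D)$. Everything in Sections~\ref{sect:braidedhh} and~\ref{sect:id_w_hh} is stated for an arbitrary braided category of the form $H\mathrm{mod}$ with $H$ quasitriangular, and $D^J\mathrm{mod}$ is such a category (with $R$-matrix $J_{21}^{-1}RJ$), so the braided cochain complexes $C^\bullet_c(B_J)$, $C^\bullet_{c,E_J}(B_J)$, together with Proposition~\ref{prop:relVreg}, are available as stated. Two preliminary remarks set things up. First, we may assume $J$ normalized, so that $(\epsilon\otimes\mathrm{id})(J)=(\mathrm{id}\otimes\epsilon)(J)=1$; since the coalgebra and comodule structures are untouched by the twist, $A=B_J^{coE}$ still, and because an element of $A$ is $E$-coinvariant the two factors of $J\in E^\ast\otimes E^\ast\subset D\otimes D$ act on it through the counit. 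It follows that $a\cdot_J a'=aa'$ for $a,a'\in A$, that $A\subset B_J$ is a subalgebra with its original multiplication, and in fact that $B_J=B$ as an $A$-bimodule. Second, $E_J\subset B_J$ is a subalgebra in $D^J\mathrm{mod}$, so $H^\bullet_{c,E_J}(B_J)$ is defined.

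To pass from the relative to the non-relative cohomology we need $E_J$ separable in $D^J\mathrm{mod}$, the twisted analogue of Lemma~\ref{lem:510}. Here I would not recompute but instead invoke the standard fact that twisting by $J$ is an equivalence of bimodule categories ${}_E(D\mathrm{mod})_E\simeq{}_{E_J}(D^J\mathrm{mod})_{E_J}$: the twist changes neither the underlying objects nor the $\mathrm{Hom}$-sets, only the actions, and the bimodule axioms are interchanged by the cocycle identity. When $E$ is semisimple and cosemisimple, $D$ (hence $D^J$, which equals $D$ as an algebra) is semisimple, and $E$ is separable in $D\mathrm{mod}$ by Lemma~\ref{lem:510}; by the equivalence, ${}_{E_J}(D^J\mathrm{mod})_{E_J}$ is then semisimple, i.e. $E_J$ is separable in $D^J\mathrm{mod}$. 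Proposition~\ref{prop:relVreg} now yields $H^\bullet_{c,E_J}(B_J)\cong H^\bullet_c(B_J)$ in $\mathrm{dg}D^J\mathrm{mod}$.

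It remains to establish the twisted analogue of Proposition~\ref{prop:relVHH}, namely $H^\bullet_{c,E_J}(B_J)\cong HH^\bullet(A,B_J)$. As before this rests on two points: (i) every right $E_J$-linear map $B_J^{\otimes_{E_J}n}\to B_J$ automatically satisfies the braided left $E_J$-linearity relation~(\ref{eq:2}) for the braiding of $D^J\mathrm{mod}$, so that $C^n_{c,E_J}(B_J)=\uHom_{E_J}(B_J^{\otimes_{E_J}n},B_J)$ is the space of right $E_J$-linear maps; and (ii) the inclusion $A^{\otimes n}\otimes E_J\to B_J^{\otimes_{E_J}n}$ is an isomorphism of right $E_J$-modules, obtained by sliding the $E$-factors to the right past the $A$-factors exactly as in the untwisted case (the identity $w\cdot_J a=wa\in AE$ for $w\in E$, $a\in A$, valid by coinvariance of $a$, makes this legitimate). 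Granting these, restriction along $A^{\otimes n}\hookrightarrow B_J^{\otimes_{E_J}n}$ is a graded-space isomorphism onto $C^\bullet(A,B_J)$, and the braided differential~(\ref{eq:dR}) collapses on functions supported on $A^{\otimes\bullet}$ to the ordinary Hochschild differential of $A$ with coefficients in the bimodule $B_J$: the new $R$-matrix and the twisted product of $B_J$ enter only through $E^\ast$-elements acting on coinvariants and through products of $A$-elements, both of which are untwisted, and $(\mathrm{id}\otimes\epsilon)(J_{21}^{-1}RJ)=1$ reduces the outer term to plain left multiplication. Since moreover $C^\bullet(A,B_J)=C^\bullet(A,B)$, this gives $HH^\bullet(A,B_J)=H^\bullet_{c,E_J}(B_J)$, and combined with the previous paragraph $HH^\bullet(A,B_J)\cong H^\bullet_c(B_J)$ when $E$ is semisimple and cosemisimple. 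The final assertion is then immediate, since $H^\bullet_{c,E_J}(B_J)$ and $H^\bullet_c(B_J)$ are objects of $\mathrm{dg}D^J\mathrm{mod}$ by the general theory of Section~\ref{sect:braidedhh}.

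I expect the main obstacle to be point (i) above, the twisted version of the lemma preceding Proposition~\ref{prop:relVHH}: one must carry the $D^J$-module structure on the relative tensor powers $B_J^{\otimes_{E_J}n}$ and the twisted multiplication on $B_J$ simultaneously, and the analogue of the identity $wx=(S^{-1}(w_2)\cdot x)w_1$ that drives that lemma becomes considerably heavier with the cocycle $J$ inserted. An alternative that hides most of this is to prove that the twisting functor carries the relative braided Hochschild complex of $B$ to that of $B_J$, i.e. $C^\bullet_{c,E_J}(B_J)\cong C^\bullet_{c,E}(B)$ as complexes in $D^J\mathrm{mod}=D\mathrm{mod}$, and then cite Proposition~\ref{prop:relVHH} directly; but verifying that this functoriality respects the bar differential requires essentially the same computation.
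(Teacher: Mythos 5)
First, a point of orientation: the paper does not actually prove this Claim. It is stated precisely as an unproved claim, with the remark that ``the proof of the claim is quite arduous,'' and the paper then deliberately retreats to the weaker Proposition \ref{prop:w_cocycle} (the cochain-level equality $C^\bullet(A,B)_J=C^\bullet(A,B_J)$ and the resulting $\mathrm{dg}D^J\mathrm{mod}$-structure), which is what actually gets used in Corollary \ref{cor:cocycprod} and Corollary \ref{cor:HH_comm}. So there is no proof in the paper to compare yours against; your proposal is an attempt to supply one.

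As such, your outline is the natural one and several of its components are genuinely sound: the normalization of $J$ and the observation that $A=B_J^{coE}$ sits inside $B_J$ untwisted; the transfer of separability through the monoidal equivalence $(-)_J\colon{}_E(D\mathrm{mod})_E\to{}_{E_J}(D^J\mathrm{mod})_{E_J}$ (a clean substitute for redoing Lemma \ref{lem:510}); the applicability of Proposition \ref{prop:relVreg} in any category of the form $H\mathrm{mod}$; and the collapse of the differential (\ref{eq:dR}) on $A^{\otimes\bullet}$, where one checks that the $E^\ast$-legs of $J_{21}^{-1}R_{21}J^{-1}_{\phantom{1}}$ act through the counit on coinvariants. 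But the proof is not complete, and you say so yourself: your step (i) --- that every right $E_J$-linear map $B_J^{\otimes_{E_J}n}\to B_J$ automatically satisfies the braided relation (\ref{eq:2}) for the twisted $R$-matrix and the twisted products --- is asserted, not proved, and it is exactly the twisted analogue of the unnamed lemma preceding Proposition \ref{prop:relVHH}, whose untwisted proof rests on the identity $wx=(S_E^{-1}(w_2)\cdot x)w_1$. With $J$ inserted, that identity acquires cocycle factors on both sides and the verification is precisely the ``arduous'' computation the author declines to perform. Until that step is carried out (or replaced --- your alternative of showing that the braided monoidal equivalence $(-)_J$ carries the relative bar resolution of $B$ over $B^{\unl{e}}$ to that of $B_J$ over $(B_J)^{\unl{e}}$, so that $H^\bullet_{c,E_J}(B_J)\cong H^\bullet_{c,E}(B)$ formally, is in my view the more promising route and does not obviously reduce to the same element-level calculation), the proposal is a plan with a hole at the one point where the difficulty of the Claim is concentrated.
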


With the claim one can provide an analog of Theorem \ref{thm:defthry} for certain crossed products.  At this point the proof of the claim is quite arduous (although it can be presented clearly).  So we forgo this path and opt instead for a happy in between with the following proposition.

\begin{proposition}\label{prop:w_cocycle}
Take $B=A\ast E$ and any $J$ as above.
\begin{enumerate}
\item There is an equality of $HH^\bullet(A)$-bimodules $HH^\bullet(A,B_J)=HH^\bullet(A,B)$, which also holds at the cochain level.
\item There is an equality of dg algebras
\[
C^\bullet(A,B)_J=C^\bullet(A,B_J)
\]
and also an equality of Hochschild cohomology rings
\[
HH^\bullet(A,B)_J=HH^\bullet(A,B_J).
\]
\item The equality of (2) gives the Hochschild cohomology of $B_J$ the natural structure of an object in the category $\mathrm{dg}D^J\mathrm{mod}$.
\end{enumerate}
\end{proposition}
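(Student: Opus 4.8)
\textbf{Proof of Proposition \ref{prop:w_cocycle}.}

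The plan is to derive all three statements from a single structural observation: $A$ is a subalgebra of $B_J$ via the \emph{same} embedding $A\hookrightarrow B$, and $B_J$ and $B$ coincide as $A$-bimodules. The reason is that $A=B^{coE}$ consists of coinvariants, so $E^\ast\subset D$ acts on $A$ through the counit, $\xi\cdot a=\epsilon(\xi)a$ for $\xi\in E^\ast$ and $a\in A$, while $1_{E^\ast}=1_D$ acts as the identity on $B$. Writing $J=J_l\ox J^l$ with $J_l,J^l\in E^\ast$ and using the counit conditions $(\epsilon\ox\mathrm{id})(J)=(\mathrm{id}\ox\epsilon)(J)=1_{E^\ast}$ of a dual cocycle \cite{montgomery04}, one computes for $a\in A$ and $b\in B$
\[
a\cdot_J b=(J_l\cdot a)(J^l\cdot b)=a\,\big(\epsilon(J_l)J^l\cdot b\big)=a\,\big(1_{E^\ast}\cdot b\big)=ab,\qquad b\cdot_J a=(J_l\cdot b)\big(\epsilon(J^l)\,a\big)=\big(1_{E^\ast}\cdot b\big)\,a=ba.
\]
Hence $A\hookrightarrow B_J$ is an algebra map, literally equal to $A\hookrightarrow B$, and $B_J=B$ as an $A$-bimodule. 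This is the Hopf-theoretic avatar of the elementary fact that a crossed product $A\ast_\alpha G$ is unchanged as an $A$-bimodule, and it is precisely the place where one uses $J\in E^\ast\ox E^\ast$ rather than an arbitrary cocycle in $D\ox D$.

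Given this, (1) is immediate: the complex $C^\bullet(A,M)$ and its $C^\bullet(A)$-bimodule structure (via cup products) depend only on the algebra $A$ and the $A$-bimodule $M$, so $C^\bullet(A,B_J)=C^\bullet(A,B)$ as dg $C^\bullet(A)$-bimodules, and passing to cohomology gives the asserted equality of $HH^\bullet(A)$-bimodules.

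For (2), both $C^\bullet(A,B)$ and $C^\bullet(A,B_J)$ are dg algebras with cup product computed using the multiplication of the target, which is legitimate since $A$ maps to each as an algebra. By Proposition \ref{prop:relVHH} the underlying complex $C^\bullet(A,B)=C^\bullet_{c,E}(B)$ is an object of $\mathrm{dg}\,YD^E_E=\mathrm{dg}\,D\mathrm{mod}$, and a routine check (using that $B$ is an algebra in $D\mathrm{mod}$) shows its cup product is a morphism there; hence the $J$-twist $C^\bullet(A,B)_J$ is defined and, by the cocycle identity, is a dg algebra in $\mathrm{dg}\,D^J\mathrm{mod}$ \cite{EGNO}. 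It remains to match the two products on the common underlying complex $\mathrm{Hom}_k(A^{\ox\bullet},B)$. Writing $u=a_1\ox\cdots\ox a_n$, $v=a_{n+1}\ox\cdots\ox a_{n+m}$ with all $a_i\in A$, the cup product of $C^\bullet(A,B_J)$ is
\[
(f\smile g)(u\ox v)=f(u)\cdot_J g(v)=\big(J_l\cdot f(u)\big)\big(J^l\cdot g(v)\big)=\big((J_l\cdot f)(u)\big)\big((J^l\cdot g)(v)\big),
\]
the last equality using that $u,v$ are tensors of coinvariant elements of $A$, so that the $E^\ast$-action passes inside the cochains --- this is the pointwise description of the $E^\ast$-action on $C^\bullet(A,B)$ recorded after Corollary \ref{cor:HcVHH}. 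The right-hand side is the value at $u\ox v$ of $(J_l\cdot f)\smile(J^l\cdot g)$ taken in $C^\bullet(A,B)$, i.e.\ of $f\cdot_J g$ in $C^\bullet(A,B)_J$; since the differentials of all three complexes agree (twisting does not alter it, and the Hochschild differential only sees $B=B_J$ as an $A$-bimodule), we get $C^\bullet(A,B_J)=C^\bullet(A,B)_J$ as dg algebras, and taking cohomology yields $HH^\bullet(A,B_J)=HH^\bullet(A,B)_J$ as graded rings.

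Statement (3) is now formal: the $J$-twist of an algebra in the braided category $\mathrm{dg}\,D\mathrm{mod}$ is an algebra in $\mathrm{dg}\,D^J\mathrm{mod}$ \cite{EGNO}, so applying this to $C^\bullet(A,B)$ --- equivalently to $HH^\bullet(A,B)$, a graded object with zero differential --- and invoking (2) exhibits $HH^\bullet(A,B_J)=HH^\bullet(A,B)_J$ as an object of $\mathrm{dg}\,D^J\mathrm{mod}$. I expect no genuinely difficult step; the work is bookkeeping --- keeping straight that $E^\ast\subset D$ acts on $YD^E_E$-objects through the $E$-coaction, that $J$ is counital, and that the $D$-module structure used on $C^\bullet(A,B)$ is the one of Proposition \ref{prop:relVHH} --- and the one place a reader might pause is the verification that $B_J=B$ as an $A$-bimodule in the first paragraph, which is the crux.
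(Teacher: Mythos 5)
Your proposal is correct and follows essentially the same route as the paper's proof: the key point in both is that $J$ acts through the counit on the coinvariants $A\subset B$ (and on $A^{\ox n}\subset B^{\ox_E n}$), which gives $B_J=B$ as an $A$-bimodule and lets the twist pass from the target multiplication onto the cochains by coinvariance of the arguments. The only cosmetic difference is that you deduce (1) up front from the $A$-bimodule equality, whereas the paper proves (1) last via the equivalent direct computation $\big(J_l\cdot f(x_1)\big)\big(J^l\cdot g(x_2)\big)=f(x_1)g(x_2)$ using $(\epsilon\ox \mathrm{id})(J)=(\mathrm{id}\ox \epsilon)(J)=1$.
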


We adopt some of the notation (but none of the results) of Section \ref{sect:product} for the proof.

\begin{proof}
We prove (1) last, and begin with (2).  Note that $A^{\ox n}\subset B^{\ox_E n}$ is exactly the coinvariants.  So $J=J_l\ox J^l$ will act trivially on the tensor powers of $A$.  This fact also implies that for any $a\in A$, and $b\in B$, we will have $a\cdot_J b=ab$ and $b\cdot_J a=ba$, so that $B_J=B$ as an $A$-bimodule.  Thus we have an equality of cochain complexes $C^\bullet(A,B)=C^\bullet(A,B_J)$.
\par

Take now any functions $f,g\in C^\bullet(A,B)=C^\bullet(A,B_J)$, and any $x\in A^{\ox n}$.  Then, when we take the product in $C^\bullet(A,B)_J$ we get
\[
\ba{rl}
\pm f\cup_J g(x)&=\big(J_l\cdot f\big)(x_1)\big(J^l\cdot g)(x_2)\\
&=\big({J_l}_1\cdot f(S({J_l}_2)\cdot x_1)\big)\big(J^l_1\cdot g(S(J^l_2)\cdot x_2)\big)\\
&=\big(J_l\cdot f(x_1)\big)\big(J^l\cdot g(x_2)\big)\ \ \text{since $x_i$ is coinvariant}\\
&=f(x_1)\cdot_J g(x_2).
\ea
\]
This last expression is equal to the cup product in $C^\bullet(A,B_J)$, and we are done.  Take cohomology to get the identification for $HH^\bullet$.  
\par

For (3) we note that $(-)_J$ restricts to a functor from $D$-module algebras to $D^J$-module algebras.  For (1) we note that $HH^\bullet(A)$ sits inside of the coinvariants in $HH^\bullet(A,B)$.  The same statement holds at the cochain level.  So whenever $f$ or $g$ is in $C^\bullet(A)\subset C^\bullet(A,B)=C^\bullet(A,B_J)$ we have
\[
f(x_1)\cdot_J g(x_2)=\big(J_l\cdot f(x_1)\big)\big(J^l\cdot g(x_2)\big)=f(x_1)g(x_2),
\]
since $(\epsilon\ox id)(J)=(id\ox\epsilon)(J)=1$.
\end{proof}

The $D^J$-module structure arrived at via twisting is the same $D^J$-module structure we arrive at by the more robust means suggested in the claim.  Namely, if we forget the coproduct on $D^J$ and the product on the Hochschild cohomology, we have $HH^\bullet(A,B_J)=HH^\bullet(A,B)$ as a $D=D^J$-module.  As a consequence we find that the $E$-action on Hochschild cohomology arrived at via the algebra (but not Hopf) embedding $E^{op}\to D^J$ does {\it not} agree with those considered in~\cite{stefan,guichardet01,GuGu}.  So it seems that we need to leave our setting, and consider~\cite{stefan} independently, in order to produce a spectral sequence relating the cohomology $H_c^\bullet(B_J)$ to the Hochschild cohomology $HH^\bullet(B_J)$.  At the moment this point of confusion remains unresolved.

\section{The cup product and braided commutativity of braided Hochschild cohomology}
\label{sect:product}

In this section we show that the complex $C^\bullet_c(B)$ and cohomology $H_c^\bullet(B)$ both admit canonical products, and that the cohomology is a braided commutative algebra under this product.  The same will hold for the relative cohomology as well.  Applications to smash products are given in Section \ref{sect:consequences}, as well as Corollary \ref{cor:in_center}.

\subsection{The braided category $\mathrm{dg} \mathscr{Z}$}

For any braided tensor category $\mathscr{Z}=H\mathrm{mod}$ the category $\mathrm{dg} \mathscr{Z}$ of complexes over $\mathscr{Z}$ carries its own braiding, tensor product, and inner homs.  The tensor product of objects $M$ and $N$ in $\mathrm{dg}\mathscr{Z}$ is the usual tensor product of complexes
\[
(M\ox N)^i=\oplus_{i_1+i_2=i} M^{i_1}\ox N^{i_2}
\]
with differential
\[
d_{M\ox N}(m\ox n)=d_M(m)\ox n+(-1)^{|m|}m\ox d_N(n),
\]
where $m$ and $n$ are assumed to be homogeneous in the above expression.  The braiding will be given by the braiding from $\mathscr{Z}$ with the addition of the standard Koszul sign:
\[
c^{\mathrm{dg}\mathscr{Z}}_{MN}:M\ox N\to N\ox M,\ \ m\ox n\mapsto (-1)^{|m||n|}r^j\cdot n\ox r_j\cdot n.
\]
The inner homs are the expected hom complexes
\[
\big(\uHom(M,N)\big)^i=\prod_{j_2-j_1=i}\uHom(M^{j_1},N^{j_2})
\]
with differential $d_{\uHom}(f)=d_Nf-(-1)^{|f|}fd_M$.  The action is the action implied by the $\uHom$ notation, $\big(h\cdot f\big)(m)=h_1\cdot \big(f(S(h_2)\cdot m)\big)$.
\par

The fact that both $d_M$ and $d_N$ commute with the $H$-action on $M$ and $N$ respectively implies that $d_{\uHom}$ commutes with the action of $H$ as well.  One can check easily that the pairing
\[
\uHom(M,N)\ox M\to N
\]
will always be a map of complexes in $\mathrm{dg}\mathscr{Z}$.  We adopt a similar notation for $\uHom_E$.
\par

In general when doing computations with chain complexes we take all elements to be homogeneous.

\subsection{The cup product}

Recall that the bar construction 
\[
\mathscr{B}B=\cdots \to B^{\ox 3}\to B^{\ox 2}\to B\to k\to 0
\]
is a dg coalgebra with comultiplication given by separation of tensors.  For $x=b_1\ox\dots\ox b_n$ we have
\[
\Delta_{\mathscr{B}B}(x)=(x)\ox (1)+(1)\ox (x)+\sum_{i=1}^{n-1} (b_1\ox\dots\ox b_i)\ox(b_{i+1}\ox\dots\ox b_n).
\]
Indeed, with the addition of the diagonal $H$-action $\mathscr{B}B$ becomes a dg coalgebra in $\mathscr{Z}=H\mathrm{mod}$.

\begin{lemma}
For any coalgebra $\Gamma$ in $\mathrm{dg} \mathscr{Z}$ and dg algebra $\Omega$ in $\mathrm{dg} \mathscr{Z}$ the inner hom complex $\uHom(\Gamma,\Omega)$ becomes an algebra in $\mathrm{dg}\mathscr{Z}$ under the (braided) convolution product
\begin{equation}\label{eq:cupprod}
f\cup g(\gamma)=(-1)^{|\gamma_1||g|}f(r^j\cdot \gamma_1)\big(r_j\cdot g\big)(\gamma_2),
\end{equation}
for $f,g\in\uHom(\Gamma,\Omega)$, $\gamma\in\Gamma$.
\end{lemma}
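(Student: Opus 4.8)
The plan is to realize the convolution product (\ref{eq:cupprod}) as the adjunct of an explicit composite of structure morphisms, all of which live in $\mathrm{dg}\mathscr{Z}$, so that ``$\cup$ is a morphism in $\mathrm{dg}\mathscr{Z}$'' --- hence $H$-linear together with the graded Leibniz rule for $d_{\uHom}$ --- becomes automatic; it then only remains to check associativity and unitality.

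First I would consider the composite
\[
\ba{c}
\uHom(\Gamma,\Omega)^{\ox 2}\ox\Gamma
\overset{id\ox\Delta_\Gamma}{\longrightarrow}\uHom(\Gamma,\Omega)^{\ox 2}\ox\Gamma^{\ox 2}\\[4pt]
\overset{id\ox c\ox id}{\longrightarrow}\big(\uHom(\Gamma,\Omega)\ox\Gamma\big)^{\ox 2}
\overset{\mathrm{pair}^{\ox 2}}{\longrightarrow}\Omega^{\ox 2}
\overset{\mathrm{mult}_\Omega}{\longrightarrow}\Omega,
\ea
\]
where $c=c^{\mathrm{dg}\mathscr{Z}}_{\uHom(\Gamma,\Omega),\Gamma}$ braids the second hom-complex past the first copy of $\Gamma$. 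Each arrow is a morphism in $\mathrm{dg}\mathscr{Z}$: $\Delta_\Gamma$ because $\Gamma$ is a coalgebra object there, $c$ by construction of the braiding on $\mathrm{dg}\mathscr{Z}$, the evaluation $\mathrm{pair}$ because it is a map of complexes in $\mathscr{Z}$ (as recorded just above the lemma), and $\mathrm{mult}_\Omega$ because $\Omega$ is a dg algebra in $\mathrm{dg}\mathscr{Z}$. Applying the chain-level $\ox$-$\uHom$ adjunction (\ref{eq:adj}), which is an adjunction in $\mathrm{dg}\mathscr{Z}$, the adjunct map $\mu\colon\uHom(\Gamma,\Omega)^{\ox 2}\to\uHom(\Gamma,\Omega)$ is again a morphism in $\mathrm{dg}\mathscr{Z}$, i.e. it is $H$-linear and satisfies $d_{\uHom}\circ\mu=\mu\circ d_{\uHom(\Gamma,\Omega)^{\ox 2}}$; the latter unpacks to the graded Leibniz identity $d(f\cup g)=d(f)\cup g+(-1)^{|f|}f\cup d(g)$. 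I would then trace a homogeneous $f\ox g\ox\gamma$ through the composite: after $id\ox\Delta_\Gamma$ we have $f\ox g\ox\gamma_1\ox\gamma_2$, after $id\ox c\ox id$ we have $(-1)^{|g||\gamma_1|}f\ox(r^j\cdot\gamma_1)\ox(r_j\cdot g)\ox\gamma_2$ (the degree-zero evaluations contribute no further Koszul sign), and the last two maps produce $(-1)^{|g||\gamma_1|}f(r^j\cdot\gamma_1)\,(r_j\cdot g)(\gamma_2)$, which is precisely (\ref{eq:cupprod}). Hence $\mu(f\ox g)=f\cup g$, and the lemma's product is a well-defined morphism in $\mathrm{dg}\mathscr{Z}$.

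For the unit I would take $1:=u_\Omega\circ\epsilon_\Gamma\colon\Gamma\to k\to\Omega$, a morphism in $\mathrm{dg}\mathscr{Z}$ since both $\epsilon_\Gamma$ (coalgebra structure of $\Gamma$) and $u_\Omega$ (dg algebra structure of $\Omega$) are; the identities $f\cup 1=f=1\cup f$ follow from the counit axioms $(\epsilon_\Gamma\ox id)\Delta_\Gamma=id=(id\ox\epsilon_\Gamma)\Delta_\Gamma$, the normalizations $(\epsilon\ox id)(R)=(id\ox\epsilon)(R)=1\ox 1$ from (\ref{eq:relations}), and unitality of $\Omega$ --- a one-line substitution into (\ref{eq:cupprod}). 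For associativity I would argue categorically: both $(f\cup g)\cup h$ and $f\cup(g\cup h)$ are the adjuncts of maps $\uHom(\Gamma,\Omega)^{\ox 3}\ox\Gamma\to\Omega$ built from two copies of $\Delta_\Gamma$, a pattern of braidings, three evaluations, and $\mathrm{mult}_\Omega$; coassociativity of $\Delta_\Gamma$ and associativity of $\mathrm{mult}_\Omega$ then reduce the equality to the coincidence of the two ways of shuffling the three hom-factors past the three $\Gamma$-factors, which is an instance of the hexagon axioms for $c$ (equivalently, of the braid relations (\ref{eq:braidrels})) together with naturality of $c$. I expect this last bookkeeping to be the only genuine obstacle --- carrying the Koszul signs generated by $c^{\mathrm{dg}\mathscr{Z}}$ in step with the $R$-matrix braiding --- while everything else is formal. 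Should a hands-on verification be preferred, one expands both sides of the associativity identity directly from (\ref{eq:cupprod}) and reconciles them using $(id\ox\Delta)(R_{21})=r^lr^j\ox r_l\ox r_j$ and its companion in (\ref{eq:braidrels}), exactly as in the proof of Lemma \ref{lem:105}.
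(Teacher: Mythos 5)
Your proposal is correct and takes essentially the same approach as the paper: the paper's proof exhibits exactly the composite $\uHom^{\ox 2}\ox\Gamma\to\uHom^{\ox 2}\ox\Gamma^{\ox 2}\to\uHom\ox\Gamma\ox\uHom\ox\Gamma\to\Omega\ox\Omega\to\Omega$ that you describe, and then deduces $H$-linearity of $\cup$ by the explicit substitution $h\ox\gamma_1\ox\gamma_2\mapsto h_1\ox S(h_3)\cdot\gamma_1\ox S(h_2)\cdot\gamma_2$, which your adjunction argument merely repackages. For associativity the paper carries out precisely the direct expansion via the braid relations (\ref{eq:braidrels}) that you offer as your fallback, so no gap remains.
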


\begin{proof}
The verification that the product is compatible with the differential is straightforward and will be omitted.  To see that the product is a map in $\mathscr{Z}=H\mathrm{mod}$, note that we arrive at the element $\pm f(r^j\cdot \gamma_1)\big(r_j\cdot g\big)(\gamma_2)$ via the sequence of maps in $\mathscr{Z}$
$$
\ba{l}
\uHom^{\ox 2}\ox \Gamma\overset{id\ox\Delta}\longrightarrow  \uHom^{\ox 2}\ox \Gamma\ox \Gamma\overset{id\ox c\ox id}\longrightarrow \uHom\ox \Gamma\ox \uHom\ox \Gamma
\\
\overset{\mathrm{pair}\ox\mathrm{pair}}\longrightarrow \Omega\ox \Omega\overset{\mathrm{mult}}\longrightarrow \Omega,
\ea
$$
where $\uHom=\uHom(\Gamma,\Omega)$.  So, $H$-linearity tells us that for any $h\in H$, $f,g\in \uHom(\Gamma,\Omega)$, and $\gamma\in \Gamma$, we have
$$
\pm h\cdot \big(f(r^j\cdot \gamma_1)\big(r_j\cdot g\big)(\gamma_2)\big)=\pm\big(h_1\cdot f)(r^j h_3\cdot \gamma_1)\big(r_j h_2\cdot g\big)(h_4\cdot \gamma_2)
$$
Replace $h\ox \gamma_1\ox \gamma_2$ with $h_1\ox S(h_3)\gamma_1\ox S(h_2)\gamma_2=h_1\ox \Delta(S(h_2)\gamma)$ to get then
$$
\ba{rl}
\big(h\cdot (f\cup g)\big)(\gamma)&=\pm h_1\cdot \big(f(r^jS(h_3)\cdot \gamma_1)\big(r_j\cdot g\big)(S(h_2)\cdot \gamma_2)\big)\\
&=\pm\big(h_1\cdot f\big)(r^j\cdot \gamma_1)\big(r_j h_2\cdot g\big)(\gamma_2)\\
&=\big((h_1\cdot f)\cup (h_2\cdot g))\big)(\gamma).
\ea
$$
This verifies that $\uHom(\Gamma,\Omega)$ with the proposed product is a, possibly non-associative, algebra in $\mathrm{dg}\mathscr{Z}$.
\par

For associativity, take $a,b,c\in \uHom(\Gamma,\Omega)$.  We have
$$
\ba{rll}
\big((a\cup b)\cup c\big)(\gamma)&
=a(r^\nu r^j_1\cdot \gamma_1)\big(r_\nu \cdot b\big)(r^j_2\cdot \gamma_2)\big(r_j\cdot c\big)(\gamma_3)\\
&=a(r^\nu r^j\cdot \gamma_1)\big( r_\nu\cdot b)(r^l\cdot \gamma_2)\big(r_lr_j\cdot c\big)(\gamma_3) &\text{(braid relation).}\\
&
\ea
$$
On the other hand we have
$$
\ba{rll}
\big(a\cup (b\cup c)\big)(\gamma)&
=a(r^\nu\cdot \gamma_1)\big((r_\nu)_1\cdot b\big)(r^l\cdot \gamma_2)\big(r_l(r_\nu)_2\cdot c\big)(\gamma_3)\\
&=a(r^\nu r^j\cdot \gamma_1)\big(r_\nu\cdot b\big)(r^l\cdot \gamma_2)\big(r_l r_j\cdot c\big)(\gamma_3) &\text{(braid relation).}\\
\ea
$$
These expressions are exactly the same, and associativity is verified.
\end{proof}

We have an identification of graded objects in $\mathscr{Z}$
\[
C^\bullet_c(B)=\uHom(\mathscr{B}B,B)
\]
and so we get a natural product on $C^\bullet_c(B)$ induced by the convolution product on $\uHom(\mathscr{B}B,B)$.  This gives the braided Hochschild complex the structure of an associative algebra in $\mathscr{Z}$, although we've yet to establish compatibility with the differential.  In keeping with tradition, we call this product on $C^\bullet_c(B)$ the {\it (braided) cup product}.

\begin{proposition}\label{prop:dga}
The braided Hochschild cochain complex $C^\bullet_c(B)$, along with the cup product, is an algebra in $\mathrm{dg} \mathscr{Z}$.  Furthermore, for any subalgebra $E\subset B$ in $\mathscr{Z}$ the subcomplex $C^\bullet_{c,E}(B)\subset C^\bullet_c(B)$ is a subalgebra in $\mathrm{dg} \mathscr{Z}$.
\end{proposition}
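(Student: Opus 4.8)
The graded–algebra part of the statement is essentially in hand, so the plan is to isolate the one remaining point. Under the identification $C^\bullet_c(B)=\uHom(\mathscr{B}B,B)$ of graded objects in $\mathscr{Z}$ the cup product \emph{is}, by construction, the braided convolution product, so the lemma just proved---applied with the dg coalgebra $\mathscr{B}B$ in $\mathscr{Z}$ and the algebra $B$ viewed as a dg algebra concentrated in degree $0$---already shows that $\cup$ is associative, unital, and a morphism in $\mathscr{Z}$. Its unit is the $0$-cochain $k\to B$, $1\mapsto 1_B$, which lies in $\mathscr{Z}$ and, by the degree-$0$ formula of Proposition \ref{prop:129}(1) together with the counit relations in (\ref{eq:relations}), is a $d_c$-cocycle acting as a two-sided unit for $\cup$ (equivalently, it is $\eta_B\circ\epsilon_{\mathscr{B}B}$). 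Since $(C^\bullet_c(B),d_c)$ is also already a complex in $\mathscr{Z}$, by Proposition \ref{prop:129}(2), the first assertion reduces to showing that $d_c$ is a graded derivation for $\cup$.

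To prove that, I would split off the part of $d_c$ already controlled by the lemma. Write $d_c=\tilde d+\delta$, where $\tilde d(f)=(-1)^{|f|+1}f\circ d_{\mathscr{B}B}$ is the differential of the convolution algebra $\uHom(\mathscr{B}B,B)$ with $B$ carrying the zero differential---i.e.\ the contribution of the middle term $f(d_{\mathscr{B}B}(-))$ of (\ref{eq:dR})---while $\delta$ gathers the two \emph{extremal} terms $(r^j\cdot b)(r_j\cdot f)(y\ox b')$ and $f(b\ox y)b'$, the terms acting only on the outermost tensor factors. The lemma tells us $\tilde d$ is a graded derivation for $\cup$ (it is precisely the differential for which the convolution product is a chain map), so it suffices to check the same for $\delta$. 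The key observation is that these two extremal operations are, respectively, left and right $\cup$-multiplication by the degree-one cochain $\mathrm{id}_B\in C^1_c(B)=\uHom(B,B)$: comparing (\ref{eq:dR}) with the convolution formula (\ref{eq:cupprod}), and pushing the $R$-matrix through the comultiplication of $\mathscr{B}B$ by means of the braid relations (\ref{eq:braidrels}), exactly as in the associativity argument of the lemma, one finds that $\delta$ assembles into the graded commutator $f\mapsto\mathrm{id}_B\cup f-(-1)^{|f|}f\cup\mathrm{id}_B$. The graded commutator with a fixed element of an associative algebra is automatically a graded derivation, so $\delta$---and hence $d_c=\tilde d+\delta$---is a graded derivation for $\cup$; this proves the first assertion.

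The real content of this is the bookkeeping in the last step: tracking Koszul signs and the location of the $R$-matrix when reassembling the extremal terms into $[\mathrm{id}_B,-]$. This is the braided counterpart of the classical fact that the Hochschild differential is a derivation for the cup product, with the braid relations taking over the role that cocommutativity of $\ox$ plays classically, and I expect no genuinely new difficulty here. (A more structural alternative, which sidesteps the sign bookkeeping, is to observe that $\mathscr{B}ar^cB$ with its \emph{full} differential is a dg coalgebra in the monoidal category of $B$-bimodule objects of $\mathscr{Z}$ under $\ox_B$, with $B$ as monoidal unit---so canonically an algebra there---so that $C^\bullet_c(B)=\Hom_{\mathrm{mod}B^{\unl{e}}}(\mathscr{B}ar^cB,B)$ is a dg algebra by convolution. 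This requires first developing $\ox_B$ on bimodules in $\mathscr{Z}$, which is routine but not needed in the classical case.)

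For the relative statement, $C^\bullet_{c,E}(B)=\bigoplus_n\uHom_E(B^{\ox_E n},B)$ is already a subcomplex of $C^\bullet_c(B)$ in $\mathrm{dg}\mathscr{Z}$ by Lemmas \ref{lem:w/e} and \ref{lem:309}, so only closure under $\cup$ needs checking. The deconcatenation coproduct of $\mathscr{B}B$ descends to the relative bar construction $\mathscr{B}_EB$, making it a dg coalgebra in $({_E\mathscr{Z}_E},\ox_E)$, and $B$ is an algebra in ${_E\mathscr{Z}_E}$; the convolution product built on $\uHom_E(\mathscr{B}_EB,B)=C^\bullet_{c,E}(B)$ from these restricts from $\cup$, because the projection $\mathscr{B}B\to\mathscr{B}_EB$ is a map of coalgebras. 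Concretely, one verifies directly from (\ref{eq:cupprod}) that if $f$ and $g$ satisfy the defining relations (\ref{eq:1})--(\ref{eq:2}) of $\uHom_E$ then so does $f\cup g$: the only splitting of the argument not handled termwise by the hypothesis on $f$ or on $g$ separately is the one falling exactly between the two $E$-blocks, where right $E$-linearity of $f$ and relation (\ref{eq:2}) for $g$ combine to produce the required identity. Hence $C^\bullet_{c,E}(B)$ is a subalgebra of $C^\bullet_c(B)$ in $\mathrm{dg}\mathscr{Z}$.
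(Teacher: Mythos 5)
Your proposal is correct and follows essentially the same route as the paper: your decomposition $d_c=\tilde d+\delta$ with $\delta$ the graded commutator against $\mathrm{id}_B\in C^1_c(B)$ is precisely the paper's Lemma \ref{lem:688} (where that cochain is called $\pi$), and the paper then concludes by observing that $\pi$ is a Maurer--Cartan element and invoking the standard twisting procedure --- which is just a packaged form of your ``inner derivations are derivations'' argument, made available because $d_c^2=0$ is already known. The relative statement is likewise handled in the paper by the same direct verification of relations (\ref{eq:1})--(\ref{eq:2}) for $f\cup g$ that you describe.
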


By an algebra in $\mathrm{dg}\mathscr{Z}$ we mean in particular that it is a dg algebra.  In order to prove the proposition it will be helpful to have the following lemma.

\begin{lemma}\label{lem:688}
The differential on the complex $C^\bullet_c(B)=(\uHom(\mathscr{B}B,B),d_c)$ is the sum
\[
d_c=d_{\uHom(\mathscr{B}B,B)}-[\pi,-],
\]
where $\pi$ is the degree $1$ map $\mathscr{B}B\to B$ which restricts to the identity on $\mathscr{B}B^{-1}=B$, and $[\pi,-]$ is the usual graded commutator $[\pi,f]=\pi\cup f-(-1)^{|f|}f\cup\pi$.
\end{lemma}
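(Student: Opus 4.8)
The plan is to verify the identity by a direct computation, testing both sides on a homogeneous $f\in C^\bullet_c(B)$ against the explicit description of $d_c$ recorded in Proposition~\ref{prop:129}. The payoff to keep in sight is that, by the preceding lemma on convolution products, $\uHom(\mathscr{B}B,B)$ is an algebra in $\mathrm{dg}\mathscr{Z}$, so $d_{\uHom(\mathscr{B}B,B)}$ is a graded derivation of $\cup$, while $[\pi,-]$ with $[\pi,f]=\pi\cup f-(-1)^{|f|}f\cup\pi$ is a graded derivation of any associative product by the usual expansion of the graded Leibniz rule; hence once this lemma is established, Proposition~\ref{prop:dga} follows immediately since $d_c$ is exhibited as a sum of two derivations of the cup product.

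First I would simplify $d_{\uHom(\mathscr{B}B,B)}$. Since $B$ sits in cohomological degree $0$ with vanishing differential, $d_{\uHom(\mathscr{B}B,B)}(f)=-(-1)^{|f|}f\circ d_{\mathscr{B}B}$; modulo the $(-1)^{|f|+1}$ prefactor carried in (\ref{eq:dR}), this is precisely the ``internal'' term $f(d_{\mathscr{B}B}(b\ox y\ox b'))$ of the braided differential, and in degree $0$ it vanishes because $d_{\mathscr{B}B}|_{k}=0$. Next I would observe that $\pi$ is $H$-invariant: in its unique nonzero component $\pi$ is $\mathrm{id}_B\colon\mathscr{B}B^{-1}=B\to B$, the $H$-action on $\mathscr{B}B$ preserves tensor degree, and $(h\cdot\pi)(m)=h_1\cdot\pi(S(h_2)\cdot m)=\epsilon(h)\pi(m)$; in particular $[\pi,-]$ genuinely operates on the complex $C^\bullet_c(B)$ in $\mathrm{dg}\mathscr{Z}$.

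The heart of the computation is to evaluate $\pi\cup f$ and $f\cup\pi$ from the convolution formula (\ref{eq:cupprod}) together with the comultiplication of $\mathscr{B}B$ (separation of tensors). Because $\pi$ is supported only in bar-degree $-1$, in $\pi\cup f(b\ox y\ox b')$ the only surviving summand of $\Delta_{\mathscr{B}B}$ is the one whose first leg is the leading tensor factor $b$; since $\pi(r^j\cdot b)=r^j\cdot b$ this collapses to a sign times $(r^j\cdot b)(r_j\cdot f)(y\ox b')$, the left extremal term of (\ref{eq:dR}). Dually, in $f\cup\pi(b\ox y\ox b')$ only the summand whose second leg is the trailing factor $b'$ survives, and $H$-invariance of $\pi$ together with the counit identities $(\epsilon\ox\mathrm{id})(R_{21})=(\mathrm{id}\ox\epsilon)(R_{21})=1\ox1$ trivialize the relevant braiding, collapsing it to a sign times $f(b\ox y)\,b'$, the right extremal term. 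This is the dual shadow of the decomposition $d_{\mathscr{B}ar^cB}=\mathrm{id}_B\ox d_{\mathscr{B}B}\ox\mathrm{id}_B+(\text{extremal terms})$ already noted in the text. Assembling $d_{\uHom(\mathscr{B}B,B)}(f)-[\pi,f]$, treating the degree $0$ case separately, and comparing termwise with Proposition~\ref{prop:129} completes the argument.

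The only genuine obstacle is the sign bookkeeping: one must reconcile the Koszul signs built into (\ref{eq:cupprod}) and into the commutator, the $(-1)^{|f|+1}$ prefactor of Proposition~\ref{prop:129}, the alternating signs of the bar differential, and the grading conventions on $\mathscr{B}B$ (with $k$ in degree $0$ and $B^{\ox n}$ in degree $-n$) and on $\uHom$. Conceptually there is nothing at stake beyond the classical identity expressing the Hochschild differential as the bracket with the multiplication cochain; keeping the signs consistent across all these identifications is all that the proof really requires.
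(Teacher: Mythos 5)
Your proposal is correct and follows essentially the same route as the paper: the paper's proof likewise notes the $H$-invariance of $\pi$, evaluates $[\pi,f]$ on a monomial $b_1\ox\dots\ox b_{n+1}$ so that the support of $\pi$ in bar-degree $-1$ collapses $\pi\cup f$ and $f\cup\pi$ to the two extremal terms of (\ref{eq:dR}), and compares termwise with Proposition \ref{prop:129}. The only cosmetic difference is that the paper carries out the sign bookkeeping explicitly in a single displayed chain of equalities rather than describing it.
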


\begin{proof}
Since $\pi$ is a map in $\mathscr{Z}=H\mathrm{mod}$ we will have $h\cdot \pi=\epsilon(h)\pi$ for each $h\in H$.  Consider now a degree $n$ function $f$ and $x=b_1\ox\dots\ox b_{n+1}$.  Then
\[
\ba{l}
(-1)^{|f|}[\pi,f](x)\\
=\pi(r^j\cdot b_1)\big(r_j\cdot f\big)(b_2\ox\dots b_{n+1})-(-1)^{n}f(b_1\ox\dots\ox b_n)\pi(b_{n+1})\\
=(r^j\cdot b_1)\big(r_j\cdot f\big)(b_2\ox\dots b_{n+1})-(-1)^{n}f(b_1\ox\dots\ox b_n)b_{n+1}\\
=(-1)^{|f|+1}d_c(f)(x)-f\big(d_{\mathscr{B}B}(x)\big)\\
=(-1)^{|f|+1}\big(d_c(f)-d_{\uHom}(f)\big)(x).
\ea
\]
This implies
\[
-[\pi,f]=d_c(f)-d_{\uHom}(f)\Rightarrow d_c(f)=d_{\uHom}(f)-[\pi,f].
\]
\end{proof}

\begin{proof}[Proof of Proposition \ref{prop:dga}]
As noted above, $\pi$ is $H$-invariant so that, by the relation $(1\ox \epsilon)(R_{21})=1\ox 1$ the cup product of $\pi$ with itself will be given by the non-braided formula
\[
\big(\pi\cup\pi\big)(x)=\pi(r^j\cdot x_1)\big(r_j\cdot \pi\big)(x_2)=\pi(x_1)\pi(x_2).
\]
Whence we see that on degree $-2$ elements we have $\big(\pi\cup\pi\big)(b\ox b')=bb'$ and find that $\pi$ solves the Maurer-Cartan equation
\[
d_{\uHom}(\pi)-\pi\cup\pi=0
\]
in the dg algebra $\uHom(\mathscr{B}B,B)$.  It is then standard that the new object
\[
(\uHom(\mathscr{B}B,B),d_{\uHom}-[\pi,-])=C^\bullet_c(B)
\]
is a dg algebra under the same product as $\uHom(\mathscr{B}B,B)$~\cite{LV}.
\par

As for the fact that $C^\bullet_{c,E}(B)$ is a subalgebra in $C^\bullet_{c}(B)$, we already know that it is a subcomplex in $\mathrm{dg} \mathscr{Z}$.  So we need only see that it is a subalgebra under the cup product.  We have for $w\in E$, $x\in \mathscr{B}B$, and $f,g\in C^\bullet_{c,E}(B)$,
\[
\ba{ll}
(f\cup g)(wx)&=f(r^j\cdot (wx_1))\big(r_j\cdot g\big)(x_2)\\
&=(r^kr^j\cdot w)\big(r_k\cdot f\big)(r^l\cdot x_1)\big(r_lr_j\cdot g\big)(x_2)\\
&=(r^j\cdot w)\big((r_j)_1\cdot f\big)(r^l\cdot x_1)\big(r_l(r_j)_2\cdot g\big)(x_2)\\
&=(r^j\cdot w)\big(((r_j)_1\cdot f)\cup((r_j)_2\cdot g)\big)(x)\\
&=(r_j\cdot w)\big(r_j\cdot (f\cup g)\big)(x).
\ea
\]
One checks by a similar application of (\ref{eq:relations}) that 
\[
f(r^j\cdot x_1)\big(r_j\cdot g\big)(wx_2)=f\big(r^j\cdot (x_1w)\big)(r_j\cdot g)(x_2),
\]
and right $E$-linearity $f\cup g(xw)=f\cup g(x)w$ follows by right $E$-linearity of $g$.  One can check easily from these three relations that $f\cup g$ satisfies all the relations necessary so that it lay in the subcomplex $\uHom_E(\mathscr{B}_EB,B)=C^\bullet_{c,E}(B)\subset C^\bullet_c(B)$.
\end{proof}

Taking cohomology then implies, from Proposition \ref{prop:dga},

\begin{corollary}\label{cor:1121}
\begin{enumerate}
\item The braided Hochschild cohomology $H^\bullet_c(B)$, as well as its relative versions $H^\bullet_{c,E}(B)$, are all algebras is $\mathrm{dg} \mathscr{Z}$ (with vanishing differential).
\item There are canonical algebra maps $H^\bullet_{c,E}(B)\to H^\bullet_c(B)$ in $\mathrm{dg} \mathscr{Z}$.
\item When $E$ is semisimple and cosemisimple, $\mathscr{Z}=YD^E_E$, and $B=A\ast E$, the algebra map $H^\bullet_{c,E}(B)\to H^\bullet_c(B)$ is an algebra isomorphism in $\mathrm{dg}\mathscr{Z}$.
\end{enumerate}
\end{corollary}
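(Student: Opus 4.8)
The plan is to harvest what has already been built. For part (1) I would invoke Proposition~\ref{prop:dga}, which exhibits $C^\bullet_c(B)$ and each $C^\bullet_{c,E}(B)$ as algebras in $\mathrm{dg}\mathscr{Z}$; in particular these are dg algebras whose differential $d_c$ is a graded derivation of the cup product. First I would record the general fact that for any algebra $\Omega$ in $\mathrm{dg}\mathscr{Z}$ the cohomology $H^\bullet(\Omega)$ is again an algebra in $\mathrm{dg}\mathscr{Z}$ with vanishing differential: the cohomology of a complex in $\mathscr{Z}=H\mathrm{mod}$ is a subquotient object in $\mathscr{Z}$, the product descends to cohomology because $d_c$ is a derivation, and the induced multiplication is a morphism in $\mathscr{Z}$ because it is induced by one at the cochain level (associativity and unitality passing to cohomology formally). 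Applying this to $\Omega=C^\bullet_c(B)$ and to $\Omega=C^\bullet_{c,E}(B)$ yields (1).

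For part (2), Proposition~\ref{prop:dga} also states that the inclusion $C^\bullet_{c,E}(B)\hookrightarrow C^\bullet_c(B)$ is a morphism of algebras in $\mathrm{dg}\mathscr{Z}$. Passing to cohomology, a morphism of dg algebras in $\mathscr{Z}$ induces a morphism of graded algebras in $\mathscr{Z}$, and this morphism lies in $\mathrm{dg}\mathscr{Z}$ since the cochain inclusion does; this is the canonical algebra map $H^\bullet_{c,E}(B)\to H^\bullet_c(B)$. For part (3), under the hypotheses $E$ semisimple and cosemisimple, $\mathscr{Z}=YD^E_E$, $B=A\ast E$, Theorem~\ref{thm:relVreg} tells us that this very map is already an isomorphism in $\mathrm{dg}\mathscr{Z}$. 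Combining with (2), it is a bijective algebra map, hence an algebra isomorphism: the inverse of a bijective morphism of graded algebras is automatically an algebra morphism, and it is a morphism in $\mathscr{Z}$ since the forgetful functor $\mathscr{Z}\to\mathrm{Vect}$ reflects isomorphisms.

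I do not expect a genuine obstacle, as every input is already proved. The only steps requiring a line of care are (i) that the cup product on cochains is $d_c$-compatible — but this is exactly the assertion ``algebra in $\mathrm{dg}\mathscr{Z}$'' of Proposition~\ref{prop:dga}, so it may simply be cited — and (ii) the bookkeeping that the $\mathscr{Z}$-structure on cohomology is precisely the one induced from the cochain complex, so that the algebra structure produced in (1) and the isomorphism produced by Theorem~\ref{thm:relVreg} refer to the same underlying data and are therefore automatically compatible, being induced by the single cochain-level inclusion $C^\bullet_{c,E}(B)\hookrightarrow C^\bullet_c(B)$. If anything is mildly delicate it is this compatibility, and it is handled by tracing everything back to that one inclusion.
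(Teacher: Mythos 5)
Your proposal is correct and follows exactly the paper's route: the paper's proof is the one-line statement that (1) and (2) are consequences of Proposition \ref{prop:dga} and that (3) additionally uses Theorem \ref{thm:relVreg}, which is precisely what you do, only with the formal passage-to-cohomology steps spelled out. The extra care you take about the $\mathscr{Z}$-structure on cohomology being the one induced from the cochain level is a reasonable elaboration of what the paper leaves implicit.
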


\begin{proof}
(1) and (2) are consequences of Proposition \ref{prop:dga}.  For (3) we consider also Theorem \ref{thm:relVreg}.
\end{proof}

Since $\mathscr{B}A=(\mathscr{B}B)^{coE}$ when $B=A\ast E$, we get that the braided cup product of functions $f\cup g$ restricted to $\mathscr{B}A$ recovers the standard cup product on the Hochschild cochain complex, under the identification $C^\bullet_{c,E}(B)=C^\bullet(A,B)$ of Proposition \ref{prop:relVHH}.  Whence we also get

\begin{corollary}\label{cor:749}
When $B=A\ast E$
\begin{enumerate}
\item there identification $HH^\bullet(A,B)=H_{c,E}^\bullet(B)$ of Proposition \ref{prop:relVHH} is one of graded algebras.
\item The standard Hochschild cohomology $HH^\bullet(A,B)$ is an algebra in $\mathrm{dg} YD^E_E$.
\item There is a multiplicative spectral sequence
\[
\Ext_{\mathrm{mod}E}(k,H^\bullet_{c,E}(B))\Rightarrow HH^\bullet(B).
\]
\end{enumerate}
When, additionally, $E$ is semisimple and cosemisimple
\begin{enumerate}
\item[(4)] there is an algebra identification $HH^\bullet(A,B)=H^\bullet_c(B)$ in $\mathrm{dg} YD^E_E$.
\item[(5)] There is an algebra identification $HH^\bullet(B)=H^\bullet_c(B)^E$.
\end{enumerate}
\end{corollary}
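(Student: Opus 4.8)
The plan is to reduce all five statements to the cochain-level identification of Proposition~\ref{prop:relVHH}, together with Corollary~\ref{cor:1121} and \c{S}tefan's spectral sequence. For (1) and (2), I would start from the restriction isomorphism $C^\bullet_{c,E}(B)\overset{\cong}\to C^\bullet(A,B)$ of Proposition~\ref{prop:relVHH}, obtained by restricting cochains along the inclusions $A^{\ox n}\subset B^{\ox_E n}$, and use that $\mathscr{B}A=(\mathscr{B}B)^{coE}$ consists of coinvariant tensors. On coinvariant arguments the braiding $c$ acts trivially, so the braided convolution formula~(\ref{eq:cupprod}) for $f\cup g$, when both arguments are drawn from $\mathscr{B}A$, collapses to the ordinary Hochschild cup product --- this is exactly the observation recorded just above the corollary. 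Hence the isomorphism of Proposition~\ref{prop:relVHH} intertwines the braided cup product on $C^\bullet_{c,E}(B)$ with the standard cup product on $C^\bullet(A,B)$; by Proposition~\ref{prop:dga} it is an isomorphism of algebras in $\mathrm{dg}YD^E_E$. Passing to cohomology yields the graded algebra identification $HH^\bullet(A,B)=H^\bullet_{c,E}(B)$ of (1), and transporting the $\mathrm{dg}YD^E_E$-algebra structure on $H^\bullet_{c,E}(B)$ supplied by Corollary~\ref{cor:1121}(1) along this isomorphism gives (2).

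For (3)--(5), I would invoke \c{S}tefan's spectral sequence $\Ext_{\mathrm{mod}E}(k,HH^\bullet(A,B))\Rightarrow HH^\bullet(B)$, which is multiplicative, and substitute the algebra identification of (1) into the $E_2$-page to obtain the multiplicative spectral sequence of (3). Statement (4) then follows by composing the identification of (1) with the algebra isomorphism $H^\bullet_{c,E}(B)\cong H^\bullet_c(B)$ in $\mathrm{dg}YD^E_E$ provided by Corollary~\ref{cor:1121}(3) in the semisimple and cosemisimple case. For (5), when $E$ is semisimple the functor $\Ext_{\mathrm{mod}E}(k,-)$ is concentrated in homological degree $0$, where it coincides with $(-)^E$; so the multiplicative spectral sequence of (3) degenerates at $E_2$ and identifies $HH^\bullet(B)$ with $H^\bullet_{c,E}(B)^E$ as graded algebras, whereupon (4) rewrites the right-hand side as $H^\bullet_c(B)^E$.

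The only point that will require genuine care, rather than formal transport of structure, is the multiplicativity of \c{S}tefan's spectral sequence and the compatibility of its $E_2$-degeneration in the semisimple case with the ring structure; once that is in hand, the rest is bookkeeping, the key input being that coinvariance of the tensor arguments kills the braiding in~(\ref{eq:cupprod}) and thereby matches the braided and non-braided cup products.
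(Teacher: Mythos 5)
Your proposal is correct and follows essentially the same route as the paper: (1) and (2) come from the observation that coinvariance of $\mathscr{B}A=(\mathscr{B}B)^{coE}$ kills the braiding in the convolution formula, (3) from the multiplicative spectral sequence, (4) from composing with Corollary \ref{cor:1121}(3), and (5) from collapse under semisimplicity. The one input you flag as needing "genuine care" --- multiplicativity of the spectral sequence --- is precisely what the paper imports as a citation to \cite{N2} rather than reproving.
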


\begin{proof}
Statement (1) is clear from the discussion preceding this corollary.  (2) follows from (1).  (3) follows from the preexisting spectral sequence
\[
\Ext_{\mathrm{mod}E}(k,HH^\bullet(A,B))\Rightarrow HH^\bullet(B)
\]
from~\cite{N2}.  (4) follows from (1) and Corollary \ref{cor:1121}.  (5) follows from (3), as the spectral sequence will collapse.  (5) can also be deduced from~\cite{gerstenhaberschack86}.
\end{proof}

We can also twist by a dual cocycle to get

\begin{corollary}\label{cor:cocycprod}
Take $B=A\ast E$.  In the presence of an invertible dual cocycle $J\in E^\ast\ox E^\ast$ the algebra $HH^\bullet(A,B_J)$ is an algebra in the braided category $\mathrm{dg}D^J\mathrm{mod}$.
\end{corollary}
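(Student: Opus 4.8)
The plan is to reduce everything to the cochain level, where the statement is essentially a consequence of Proposition~\ref{prop:w_cocycle}, and only then pass to cohomology. First I would recall that, by the identification $C^\bullet_{c,E}(B)=C^\bullet(A,B)$ of Proposition~\ref{prop:relVHH} together with Proposition~\ref{prop:dga}, the Hochschild cochain complex $C^\bullet(A,B)$ is an algebra in $\mathrm{dg}YD^E_E=\mathrm{dg}D\mathrm{mod}$, where $D$ denotes the Drinfeld double of $E$. As recalled in Section~\ref{sect:w_cocycle}, the twisting functor $(-)_J$ carries algebras in $\mathrm{dg}D\mathrm{mod}$ to algebras in the braided category $\mathrm{dg}D^J\mathrm{mod}$, with $D^J$ quasitriangular of $R$-matrix $J_{21}^{-1}RJ$; applying it to $C^\bullet(A,B)$ produces an algebra $C^\bullet(A,B)_J$ in $\mathrm{dg}D^J\mathrm{mod}$.

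The second step is to identify this twisted complex with the Hochschild complex of the twisted algebra $B_J$. This is precisely what Proposition~\ref{prop:w_cocycle} provides: part~(2) gives the equality of dg algebras $C^\bullet(A,B)_J=C^\bullet(A,B_J)$, and part~(3) says that, under this equality, the $D^J$-module structure obtained from the twist is the natural one. Hence $C^\bullet(A,B_J)$ is an algebra in $\mathrm{dg}D^J\mathrm{mod}$. Finally, passing to cohomology --- using, with $\mathscr{Z}=D^J\mathrm{mod}$, the same reasoning that derived Corollary~\ref{cor:1121} from Proposition~\ref{prop:dga}, namely that the cohomology of an algebra in $\mathrm{dg}\mathscr{Z}$ is again an algebra in $\mathrm{dg}\mathscr{Z}$ with vanishing differential --- we conclude that $HH^\bullet(A,B_J)=H^\bullet\big(C^\bullet(A,B_J)\big)$ is an algebra in $\mathrm{dg}D^J\mathrm{mod}$, as claimed.

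I do not expect a real obstacle. The only delicate point is the compatibility of $D^J$-module structures invoked above: one must know that transporting the $D$-action along the cochain-level equality $C^\bullet(A,B_J)=C^\bullet(A,B)$ and then twisting yields the $D^J$-action intrinsic to $C^\bullet(A,B_J)$, and this is exactly Proposition~\ref{prop:w_cocycle}(3). Should one prefer to avoid that clause, one can verify directly, using $(\epsilon\ox id)(J)=(id\ox\epsilon)(J)=1$ and the coinvariance of $A^{\ox n}\subset B^{\ox_E n}$, that the $J$-twisted convolution product on $C^\bullet(A,B)$ agrees with the cup product on $C^\bullet(A,B_J)$ and that both sides carry the expected $D^J$-action; this is routine and amounts to re-running the computation in the proof of Proposition~\ref{prop:w_cocycle}(2).
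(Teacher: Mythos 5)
Your proof is correct and follows essentially the same route as the paper: the paper's own argument is precisely ``Proposition \ref{prop:w_cocycle} plus the fact that the $J$-twist sends dg algebras in $D\mathrm{mod}$ to dg algebras in $D^J\mathrm{mod}$,'' which is exactly what you spell out. You simply make explicit the intermediate steps (the cochain-level identification $C^\bullet(A,B)_J=C^\bullet(A,B_J)$ and the passage to cohomology) that the paper leaves implicit.
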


\begin{proof}
The result follows from Proposition \ref{prop:w_cocycle} and the fact that the $J$-twist sends dg algebras in $D\mathrm{mod}$ to dg algebras in $D^J\mathrm{mod}$~\cite{montgomery04}.
\end{proof}

We presumably will not get an identification of algebras $HH^\bullet(B)=H^\bullet_{c,E}(B)^E$ if we include a dual cocycle $J$, since $E$ is not a Hopf subalgebra of the twisted double in general, and so $C^\bullet_{c,E_J}(B_J)$ will not be a dg algebra in $E\mathrm{mod}$.  So it makes no sense to think about the ``$E$-invariant subalgebra" here.  This brings us back to the concerns expressed at the conclusion of Section \ref{sect:w_cocycle}.

\begin{remark}
Although we have ignored the issue, our production of a product on the cohomology $H^\bullet_c(B)$ is somewhat curious, since the identification
\[
H^\bullet_c(B)=\Ext_{\mathrm{mod}B^{\unl{e}}}(B,B)
\]
tells us that this object already has a natural product (the Yoneda product).  This curiosity also appears in the non-braided setting with the production of the usual cup product.  We expect that, as in the non-braided setting, the two products will agree.  Namely, we should be able to produce a dg algebra quasi-isomorphism
$$
C^\bullet_c(B)\to \End_{\mathrm{mod}B^{\unl{e}}}(\mathscr{B}ar^cB,\mathscr{B}ar^cB)
$$
just as in~\cite{N}.  One should simply apply the braiding where appropriate.
\end{remark}

\subsection{Braided commutativity}

\begin{definition}
We define the circle operation
\[
\circ:C^\bullet_{c}(B)\ox C^\bullet_{c}(B)\to C^\bullet_{c}(B)
\]
by
\[
f\circ g(x)=(-1)^{|x_1|(|g|-1)}f\big((r^j\cdot x_1)\ox \big(r_j\cdot g\big)(x_2)\ox x_3\big)
\]
and the \emph{na\"ive bracket} by
\[
[f,g]_\circ=f\circ g-(-1)^{(|f|-1)(|g|-1)}(r^j\cdot g)\circ (r_j\cdot f).
\]
\end{definition}

Here we've used the identifications $B^{\ox n_1}\ox B^{\ox n_2}\ox B^{\ox n_3}=B^{\ox n_1+n_2+n_3}$, and subsequent concatenation map $\mathscr{B}B\ox\mathscr{B}B\ox\mathscr{B}B\to \mathscr{B}B$ to view the elements $r^j\cdot x_1\ox (r_j\cdot g)(x_2)\ox x_3$ as laying in the bar construction of $B$.  One subtlety of this point is that, when $x_1$ is in $B^{\ox 0}=k$, for example, the element $r^j\cdot x_1\ox (r_j\cdot g)(x_2)\ox x_3$ is identified with $g(x_1)\ox x_2$.  Save for the presence of $R_{21}$, this formula looks exactly like the usual circle operation~\cite{gerstenhaber63}, and can be identified with composition of braided coderivations as in~\cite{stasheff}.
\par

The bracket $[,]_\circ$ will {\it not} preserve cocycles or coboundaries in general.  One needs to do a bit more work in order to produce a well behaved bracket operation which extends the Gerstenhaber bracket to the braided setting (see~\cite[Ch. 3]{negronthesis}).  This poorly behaved bracket will, however, still prove quite useful.

By the usual computations one checks

\begin{lemma}
For any subalgebra $E\subset B$ the dg subalgebra $C^\bullet_{c,E}(B)\subset C^\bullet_c(B)$ is preserved by the circle operation and na\"ive bracket.
\end{lemma}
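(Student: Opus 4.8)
The plan is to verify directly that the formulas defining $\circ$ and $[,]_\circ$ send elements of the subcomplex $C^\bullet_{c,E}(B)$ back into $C^\bullet_{c,E}(B)$. Recall from Lemma \ref{lem:w/e} that $C^\bullet_{c,E}(B)$ consists precisely of those $f\in\uHom(B^{\ox n},B)$ that are right $E$-linear (relation (\ref{eq:1})) and satisfy the left-$E$-twisting relation (\ref{eq:2}), equivalently those maps that descend to $\uHom_E(B^{\ox_E n},B)$. So the task reduces to checking that, given $f,g$ satisfying (\ref{eq:1}) and (\ref{eq:2}), the composite $f\circ g$ satisfies them too; braided commutativity of the two summands of the bracket is automatic once $\circ$ is shown to preserve the subalgebra, since $r^j\cdot g$ and $r_j\cdot f$ again satisfy (\ref{eq:1}) and (\ref{eq:2}) by Lemma \ref{lem:309} (the subspace is an $H$-submodule).

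First I would handle right $E$-linearity: evaluating $f\circ g$ on a monomial $x = y\ox b$ with the last tensor factor multiplied by $w\in E$, the $w$ lands in $x_3$ under the deconcatenation coproduct $\Delta_{\mathscr{B}B}$ (it is the last tensor slot), so $f\circ g(xw) = \pm f\big((r^j\cdot x_1)\ox(r_j\cdot g)(x_2)\ox x_3 w\big)$, and right $E$-linearity of $f$ pulls the $w$ out to give $f\circ g(x)w$. This is the same bookkeeping used in the proof of Proposition \ref{prop:dga} for the cup product. Second, for the left relation (\ref{eq:2}) I would evaluate $f\circ g(wx)$: now $w$ appears in $x_1$, i.e. inside the slot $(r^j\cdot x_1)$, so I would apply (\ref{eq:2}) for $g$ (when $x_1$ is short enough that the action of $g$ sees $w$) or for $f$, together with the $R$-matrix identities (\ref{eq:relations}) and the braid relations (\ref{eq:braidrels}), exactly as in the computation $(f\cup g)(wx) = (r^j\cdot w)(r_j\cdot(f\cup g))(x)$ carried out in Proposition \ref{prop:dga}. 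The coassociativity of $\Delta_{\mathscr{B}B}$ and the fact that $R$-matrix conjugation interacts with $\Delta$ via the braid relations are what make the reassociation work; the degenerate cases (where $x_1$, $x_2$, or $x_3$ lies in $B^{\ox 0}=k$) need to be noted but cause no trouble, since on those pieces $g$ either acts as $g(-)$ directly or $f$ sees fewer slots, and the relations still hold.

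The main obstacle — really the only delicate point — is keeping track of the $R$-matrix insertions and the Koszul signs when $w$ sits inside the intermediate tensor slot that $g$ acts on, rather than at the far left or far right. In that situation one must move the $H$-action past $g$ using (\ref{eq:2}) for $g$, then move it past $f$ using (\ref{eq:2}) for $f$, and reconcile the two $R$-matrix factors using a braid relation from (\ref{eq:braidrels}); this is genuinely the same manipulation as in Proposition \ref{prop:dga}, just applied one tensor slot deeper, which is why the lemma can be stated as following "by the usual computations." I would therefore present the proof as: "Right $E$-linearity is immediate since the relevant $E$-factor lies in the outermost tensor slot. For relation (\ref{eq:2}) one repeats verbatim the $R$-matrix computation of Proposition \ref{prop:dga}, using the braid relations (\ref{eq:braidrels}), to check that $f\circ g(wx) = (r^j\cdot w)\big(r_j\cdot(f\circ g)\big)(x)$; the degenerate cases where a factor of $\Delta_{\mathscr{B}B}(x)$ lies in $k$ are handled directly. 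Closure under $[,]_\circ$ then follows since each summand is a composite of maps satisfying (\ref{eq:1})–(\ref{eq:2}) twisted by elements of $H$, and $\uHom_E$ is an $H$-submodule of $\uHom$ by Lemma \ref{lem:309}."
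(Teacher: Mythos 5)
Your proposal is correct and is exactly the verification the paper has in mind when it says ``by the usual computations one checks'': a slot-by-slot check that $f\circ g$ satisfies relations (\ref{eq:1}) and (\ref{eq:2}) via the same $R$-matrix and braid-relation manipulations as in Proposition \ref{prop:dga}, with closure under $[\,,\,]_\circ$ following from Lemma \ref{lem:309}. The only point worth making explicit in a write-up is that membership in $\uHom_E(B^{\ox_E n},B)$ also requires $f\circ g$ to be $E$-balanced at the internal cut points of $B^{\ox_E n}$ (not just (\ref{eq:1})--(\ref{eq:2}) for the outer factors), but this is handled by the very same manipulation you describe for the slot adjacent to where $g$ is inserted, so your argument covers it.
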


We can now prove an essential technical result.

\begin{proposition}\label{prop:bdcup}
For $f,g$ in $C^\bullet_{c}(B)$ or $C^\bullet_{c,E}(B)$ we have
\begin{equation}\label{eq:4973}
\ba{l}
(-1)^{|f|+1}d_c(f\circ g)+(-1)^{|f|} d_c(f)\circ g-f\circ d_c(g)\\
=f\cup g-(-1)^{|f||g|}(r^j\cdot g)\cup(r_j\cdot f).
\ea
\end{equation}
In particular, if $f$ and $g$ are cocycles then
\begin{equation}\label{eq:4978}
(-1)^{|f|+1}d(f\circ g)=f\cup g-(-1)^{|f||g|}(r^j\cdot g)\cup (r_j\cdot f).
\end{equation}
\end{proposition}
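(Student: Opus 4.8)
\emph{Overall strategy.} The identity (\ref{eq:4973}) is the braided counterpart of the classical formula of Gerstenhaber \cite{gerstenhaber63} measuring the defect of the composition operation to be a chain map, so the plan is to reproduce that argument while carefully transporting the $R$-matrix through each step. First I would use Lemma \ref{lem:688} to rewrite $d_c$ as $d_{\uHom(\mathscr{B}B,B)}-[\pi,-]$ throughout (\ref{eq:4973}). Since $B$ carries the zero differential, $d_{\uHom}(h)(x)=-(-1)^{|h|}h\bigl(d_{\mathscr{B}B}(x)\bigr)$, and $d_{\mathscr{B}B}$ is the operation of inserting, at each consecutive pair of slots, the multiplication of $B$ --- which by Proposition \ref{prop:dga} is exactly the cochain $\pi\cup\pi$. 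Thus after the substitution every term on the left of (\ref{eq:4973}) is a composite of the three ingredients $\circ$, the cup product with $\pi$, and ``evaluation against $d_{\mathscr{B}B}$'', and the statement becomes a combinatorial identity in the graded object $\uHom(\mathscr{B}B,B)$ equipped with the distinguished element $\pi\cup\pi$; forgetting the $R$-matrix, this is precisely Gerstenhaber's identity.

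\emph{The computation.} Evaluating on a homogeneous $x\in\mathscr{B}B$ and writing $x_1\ox x_2\ox x_3$ for the iterated coproduct appearing in the definition of $\circ$, I would distribute $d_{\mathscr{B}B}$ inside $(f\circ g)\bigl(d_{\mathscr{B}B}(x)\bigr)$ and sort the resulting elementary multiplications into three classes: (i) those multiplying two adjacent factors that $f$ sees directly (inside $x_1$ or $x_3$); (ii) those multiplying two adjacent factors inside the block $x_2$ that $g$ sees; and (iii) those multiplying $(r_j\cdot g)(x_2)$ against the factor immediately to its left or right --- the ``seam'' terms. The class (i) and class (ii) contributions cancel against the corresponding deep-interior terms of $(-1)^{|f|}d_c(f)\circ g$ and $-f\circ d_c(g)$ (here $d_{\mathscr{B}B}$ acts away from the seam, so the order of this insertion and of the insertion of $g$ is immaterial, modulo $H$-linearity of the pairing and multiplication maps and a re-association of $R$-matrix labels via the braid relations (\ref{eq:braidrels})). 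After these cancellations, both sides of (\ref{eq:4973}) have been reduced to purely seam-type expressions.

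\emph{Telescoping the seam.} What remains are the class (iii) terms together with the two pieces $-\pi\cup(f\circ g)$ and $(-1)^{|f\circ g|}(f\circ g)\cup\pi$ coming from $-[\pi,f\circ g]$, and the analogous $\pi$-prepend and $\pi$-append pieces hidden inside $d_c(f)\circ g$ and $f\circ d_c(g)$. Because $\pi$ is the projection $\mathscr{B}B\to B$, the operator $\pi\cup(-)$ is ``prepend one bare tensor factor to the argument'' (with the output braided past the cochain, producing an $R$-matrix) and $(-)\cup\pi$ is ``append one bare factor''. Matching each interior seam term against the corresponding prepend/append term, everything cancels in pairs except the two extreme configurations: $g$ placed at the very front of the argument, its output then multiplied on the right by the single surviving factor and the whole thing handed to $f$; and $g$ placed at the very back, its output multiplied on the left. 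These two survivors are exactly $\pm f\cup g$ and $\mp(-1)^{|f||g|}\,(r^j\cdot g)\cup(r_j\cdot f)$, the $R$-matrix in the second occurring precisely because relocating $g$ to the extreme left braids its output past all of the arguments that $f$ consumes, and this braiding is recorded by $r^j\ox r_j$. Adding the three displayed identities then yields (\ref{eq:4973}); setting $d_c f=d_c g=0$ yields (\ref{eq:4978}); and since, by the preceding lemma, $C^\bullet_{c,E}(B)\subset C^\bullet_c(B)$ is stable under $\circ$ and $\cup$, the identity holds verbatim for the relative complex.

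\emph{Main obstacle.} The conceptual skeleton is entirely classical; the real work --- and the only serious risk of error --- lies in the sign and $R$-matrix bookkeeping of the last two paragraphs: checking that the Koszul signs combining the internal grading of $\mathscr{B}B$ with the $\mathscr{Z}$-grading come out correctly, that the interior seam terms cancel exactly against the prepend/append terms, and that the two survivors acquire precisely the $R$-matrix labels and signs displayed. This forces repeated use of the braid relations (\ref{eq:braidrels}), of the counit identities in (\ref{eq:relations}) to annihilate stray $R$-matrix legs (e.g. whenever a leg of $R_{21}$ meets $\pi$), and of the $H$-linearity of the structure maps. A slightly cleaner alternative would be to first verify that $d_c=\pm[\pi\cup\pi,-]_\circ$ and that $\circ$ satisfies a braided pre-Lie relation, and then invoke the purely formal ``pre-Lie algebra with a Maurer--Cartan element'' computation; but proving the braided pre-Lie relation costs about the same, so I would not expect a genuine shortcut there.
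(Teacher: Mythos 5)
Your proposal is correct and is in substance the same argument as the paper's: both come down to the classical Gerstenhaber term-sorting of the bar-differential insertions into interior terms (cancelling against $(-1)^{|f|}d_c(f)\circ g-f\circ d_c(g)$) and seam terms (producing the two cup products, with the $R$-matrix recording the braiding of $g$ past $f$'s arguments). The only difference is organizational: the paper takes precisely the route you flag as the ``cleaner alternative,'' writing $d_c(f)=[\mu,f]_\circ$ for the multiplication cochain $\mu=\pi\cup\pi$ and verifying the single identity
\[
[\mu,f\circ g]_\circ=[\mu,f]_\circ\circ g-(-1)^{|f||g|+|f|-1}(r^j\cdot g)\cup(r_j\cdot f)-(-1)^{|f|}f\cup g-(-1)^{|f|}f\circ[\mu,g]_\circ,
\]
which rearranges to (\ref{eq:4973}).
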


\begin{proof}
This is just as in~\cite{gerstenhaber63}.  Let $\mu:\mathscr{B}_EB\to B$ be the degree $2$ function given on $B^{\ox_E 2}$ by the multiplication $b\ox b'\mapsto bb'$, and $0$ on all other tensor powers.  Then the differential $d_c$ is exactly the na\"ive bracket $d_c(f)=[\mu,f]_\circ$.  With this interpretation one checks the identity
\[
\ba{l}
[\mu,f\circ g]_\circ\\
=[\mu,f]_\circ\circ g-(-1)^{|f||g|+|f|-1}(r_j\cdot g)\cup (r_j\cdot f)-(-1)^{|f|}f\cup g-(-1)^{|f|}f\circ [\mu,g]_\circ.
\ea
\]
Rearranging gives
\[
\ba{l}
[\mu,f\circ g]_\circ-[\mu,f]_\circ\circ g+(-1)^{|f|}f\circ [\mu,g]_\circ\\
=(-1)^{|f|+1}f\circ [\mu,g]_\circ-(-1)^{|f||g|+|f|-1}r_jg\cup r_jf.
\ea
\]
Multiplying by $(-1)^{|f|+1}$ and replacing $[\mu,-]$ with $d_c$ gives the formula (\ref{eq:4973}).
\end{proof}

We call an algebra $\Omega$ in $\mathrm{dg}\mathscr{Z}$ {\it commutative in} $\mathrm{dg}\mathscr{Z}$, or {\it braided commutative}, if for each homogeneous $\omega,\omega'\in \Omega$ we have
\begin{equation}\label{eq:874}
\omega\omega'-(-1)^{|\omega||\omega'|}(r^j\cdot \omega')(r_j\cdot \omega)=0.
\end{equation}
This notion is not new~\cite{baez94}.

\begin{theorem}\label{thm:braided_comm}
The braided cohomologies $H^\bullet_c(B)$, as well as the relative versions $H^\bullet_{c,E}(B)$, are commutative algebras in $\mathrm{dg}\mathscr{Z}$.
\end{theorem}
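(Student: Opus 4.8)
The plan is to read braided commutativity straight off the homotopy formula established in Proposition \ref{prop:bdcup}, much as graded commutativity of ordinary Hochschild cohomology follows from Gerstenhaber's identity. By Corollary \ref{cor:1121}, the cohomologies $H^\bullet_c(B)$ and each relative $H^\bullet_{c,E}(B)$ are already associative algebras in $\mathrm{dg}\mathscr{Z}$ with vanishing differential, so the only thing left to verify is the defining relation (\ref{eq:874}) for the cup product on cohomology classes.

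First I would fix two cohomology classes and represent them by cocycles $f$ and $g$ in $C^\bullet_c(B)$, respectively in the subcomplex $C^\bullet_{c,E}(B)$ (which is stable under $\cup$ and $\circ$ by Proposition \ref{prop:dga} and the lemma preceding Proposition \ref{prop:bdcup}). Specializing the identity (\ref{eq:4973}) to $d_c(f)=d_c(g)=0$ yields exactly (\ref{eq:4978}), so that
\[
f\cup g-(-1)^{|f||g|}(r^j\cdot g)\cup (r_j\cdot f)=(-1)^{|f|+1}d_c(f\circ g)
\]
is a coboundary, $f\circ g$ being a genuine cochain in the relevant complex. Passing to cohomology kills the right-hand side, so $[f][g]=(-1)^{|f||g|}\big[(r^j\cdot g)\cup(r_j\cdot f)\big]$ in $H^\bullet_c(B)$ (and likewise relatively).

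Second I would identify the class on the right with the ``opposite product'' appearing in (\ref{eq:874}). Since the $H$-action on $H^\bullet_c(B)$ is the one induced from the complex $C^\bullet_c(B)$ and $d_c$ is $H$-linear (Proposition \ref{prop:129}(2)), we have $[r^j\cdot g]=r^j\cdot[g]$ and $[r_j\cdot f]=r_j\cdot[f]$, and the cup product descends to the product on cohomology; hence $\big[(r^j\cdot g)\cup(r_j\cdot f)\big]=(r^j\cdot[g])(r_j\cdot[f])$. Substituting this in gives $[f][g]-(-1)^{|f||g|}(r^j\cdot[g])(r_j\cdot[f])=0$, which is precisely the braided commutativity relation (\ref{eq:874}) for $\Omega=H^\bullet_c(B)$. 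The relative case is word for word the same, using $f\circ g\in C^\bullet_{c,E}(B)$.

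The real content of the argument is entirely contained in Proposition \ref{prop:bdcup}, which is already in hand, so the theorem is essentially a corollary; the points that need genuine care are bookkeeping ones. One must check that the Koszul signs built into the braiding $c^{\mathrm{dg}\mathscr{Z}}$ on the graded object $H^\bullet_c(B)$ match the signs $(-1)^{|f||g|}$ in (\ref{eq:4978}), and confirm that reading the cochain-level identity as a statement about the \emph{induced} product and braiding on cohomology introduces no spurious sign or associator. I expect this sign-matching, together with the routine observation that $f\circ g$ lands in the relative subcomplex, to be the only subtleties; everything else is a formal consequence of the $\mathrm{dg}\mathscr{Z}$-algebra structure recorded in Corollary \ref{cor:1121}.
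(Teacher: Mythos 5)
Your argument is exactly the paper's: the theorem is deduced directly from equation (\ref{eq:4978}) of Proposition \ref{prop:bdcup}, which exhibits the braided commutator of two cocycles as the coboundary of $f\circ g$, with the passage to cohomology handled by the $H$-linearity of $d_c$ and the descent of the cup product already recorded in Corollary \ref{cor:1121}. The extra bookkeeping you flag (signs, the relative subcomplex being stable under $\circ$) is implicit in the paper's one-line proof and correctly dispatched in your write-up.
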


\begin{proof}
Equation (\ref{eq:4978}) of Proposition \ref{prop:bdcup} implies that the braided commutator of any two cocycles is a coboundary.
\end{proof}

\begin{remark}
One can compare Theorem \ref{thm:braided_comm}, and its proof, to \cite[Corollary 3.13]{mastnaketal10}.  In our language, the ``Hochschild cohomology" considered in \cite{mastnaketal10} should be the braided Hochschild cohomology with coefficients in the unit $\mathbf{1}_\mathscr{Z}$.
\end{remark}

The first portion of the next result was deduced independently in \cite{schedlerwitherspoon} in the case of a finite group acting in characteristic $0$.

\begin{corollary}\label{cor:HH_comm}
For $B=A\ast E$ the cohomology $HH^\bullet(A,B)$ is braided commutative in $\mathrm{dg}YD^E_E$.  For any invertible dual cocycle $J$, the cohomology $HH^\bullet(A,B_J)$ is braided commutative in $\mathrm{dg}D^J$.
\end{corollary}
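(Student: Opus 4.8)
The plan is to read off both assertions from results already assembled, with no genuinely new computation.

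For the first assertion I would invoke Corollary \ref{cor:749}: part (1) identifies $HH^\bullet(A,B)$ with the $E$-relative braided cohomology $H^\bullet_{c,E}(B)$ as graded algebras, and part (2) upgrades this to an identification of algebras in $\mathrm{dg}YD^E_E$. (Note that no semisimplicity hypothesis is available here, so one must work with the relative cohomology rather than $H^\bullet_c(B)$ directly.) Theorem \ref{thm:braided_comm} says precisely that $H^\bullet_{c,E}(B)$ is a commutative algebra in $\mathrm{dg}\mathscr{Z}=\mathrm{dg}YD^E_E$, i.e.\ that the identity (\ref{eq:874}) holds for any two homogeneous classes. Transporting (\ref{eq:874}) across the algebra isomorphism of Corollary \ref{cor:749} then yields the braided commutativity of $HH^\bullet(A,B)$ in $\mathrm{dg}YD^E_E$ at once.

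For the twisted statement I would start from Proposition \ref{prop:w_cocycle}(2), which gives an equality of graded algebras $HH^\bullet(A,B)_J=HH^\bullet(A,B_J)$, the left side carrying the twisted product $\cdot_J$ and, by Proposition \ref{prop:w_cocycle}(3) together with Corollary \ref{cor:cocycprod}, living in $\mathrm{dg}D^J\mathrm{mod}$. So the task reduces to the general claim that the $J$-twist of a braided-commutative algebra in $\mathrm{dg}D\mathrm{mod}$ is braided-commutative in $\mathrm{dg}D^J\mathrm{mod}$. I would establish this in one of two equivalent ways: either by observing that $(-)_J$ is the algebra-level shadow of the braided monoidal equivalence $\mathrm{dg}D\mathrm{mod}\overset{\sim}{\longrightarrow}\mathrm{dg}D^J\mathrm{mod}$ associated to $J$ (cf.\ \cite{montgomery04,EGNO}), and any braided monoidal equivalence carries commutative algebra objects to commutative algebra objects; or by a direct check, feeding the twisted multiplication $\omega\cdot_J\omega'=(J_l\cdot\omega)(J^l\cdot\omega')$, the new $R$-matrix $R^J=J_{21}^{-1}RJ$, and the counit normalizations $(\epsilon\ox id)(J)=(id\ox\epsilon)(J)=1$ into (\ref{eq:874}) and verifying that commutativity for $(\Omega,R)$ becomes commutativity for $(\Omega_J,R^J)$. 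Applying this with $\Omega=HH^\bullet(A,B)$ finishes the proof.

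I do not expect a serious obstacle: the argument is essentially bookkeeping. The one point that needs care is the ``twist preserves braided commutativity'' step — in the direct-computation route one must check that the braiding convention (\ref{eq:86}), the Koszul sign built into the braiding on $\mathrm{dg}\mathscr{Z}$, and the twisted product all interact correctly, while in the abstract route one must cite (or briefly recall) the fact that dual-cocycle twisting is a braided monoidal equivalence on the relevant dg categories.
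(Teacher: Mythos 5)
Your proposal is correct and follows essentially the same route as the paper: the first assertion via the algebra identification $HH^\bullet(A,B)=H^\bullet_{c,E}(B)$ of Corollary \ref{cor:749} combined with Theorem \ref{thm:braided_comm}, and the twisted assertion via Proposition \ref{prop:w_cocycle} together with the fact that $(-)_J$ sends commutative algebras in $\mathrm{dg}D\mathrm{mod}$ to commutative algebras in $\mathrm{dg}D^J\mathrm{mod}$. The extra detail you supply on verifying the twist-preserves-commutativity step is a reasonable elaboration of what the paper cites without proof.
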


\begin{proof}
The result for $HH^\bullet(A,B)$ follows from the algebra identification $HH^\bullet(A,B)=H^\bullet_{c,E}(B)$ of Corollary \ref{cor:749}.  In the presence of the twist $J$, the result follows from Proposition \ref{prop:w_cocycle} and the fact that the twist functor $(-)_J$ sends commutative algebras in $\mathrm{dg}D\mathrm{mod}$ to commutative algebras in $\mathrm{dg}D^J\mathrm{mod}$.
\end{proof}

The following corollary seems to be new.  We let $\mathrm{Center}(\Omega)$ denote the usual graded center of a dg algebra $\Omega$.

\begin{corollary}\label{cor:in_center}
When $E$ is semisimple and $B=A\ast E$, there is an algebra inclusion
\[
HH^\bullet(B)\subset\mathrm{Center}\big(HH^\bullet(A,B)\big).
\]
\end{corollary}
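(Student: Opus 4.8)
The plan is first to identify $HH^\bullet(B)$ with an invariant subalgebra of $HH^\bullet(A,B)$, and then to read off centrality directly from braided commutativity. Since $E$ is semisimple every $E$-module is projective, so $\Ext^{>0}_{\mathrm{mod}E}(k,-)$ vanishes and the multiplicative spectral sequence $\Ext_{\mathrm{mod}E}(k,H^\bullet_{c,E}(B))\Rightarrow HH^\bullet(B)$ of Corollary~\ref{cor:749}(3) is concentrated in the column $p=0$. It therefore collapses, and multiplicativity gives an algebra isomorphism $HH^\bullet(B)\cong\Hom_{\mathrm{mod}E}(k,H^\bullet_{c,E}(B))=H^\bullet_{c,E}(B)^E$. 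Combining this with the algebra identification $H^\bullet_{c,E}(B)=HH^\bullet(A,B)$ of Corollary~\ref{cor:749}(1) realizes $HH^\bullet(B)$ as the subalgebra $HH^\bullet(A,B)^E$ of $E$-invariants, where the $E$-action is the module part of the Yetter--Drinfeld structure on $HH^\bullet(A,B)$ --- equivalently, the action of $E^{op}\subset D$ when $YD^E_E=D\mathrm{mod}$, which is the right $E$-action of~\cite{stefan,N2}. That these invariants form a subalgebra follows from $D$-linearity of the cup product together with the fact that $E^{op}$ is a Hopf subalgebra of $D$.

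Next I would carry out the key computation. By Corollary~\ref{cor:HH_comm} the algebra $\Omega:=HH^\bullet(A,B)$ is commutative in $\mathrm{dg}\,YD^E_E=\mathrm{dg}\,D\mathrm{mod}$, so it satisfies the relation~\eqref{eq:874} for the $R$-matrix $R_D=\sum_j r_j\ox r^j$ of the double, with $r_j\in E^{op}$ and $r^j\in E^\ast$. Fix homogeneous $\omega\in HH^\bullet(B)=\Omega^E$ and homogeneous $\omega'\in\Omega$. Since $\omega$ is invariant under $E^{op}$ and each $r_j$ lies in $E^{op}$, we have $r_j\cdot\omega=\epsilon(r_j)\omega$, so~\eqref{eq:874} yields
\[
\omega\omega'=(-1)^{|\omega||\omega'|}\sum_j(r^j\cdot\omega')(r_j\cdot\omega)=(-1)^{|\omega||\omega'|}\Bigl(\bigl(\textstyle\sum_j\epsilon(r_j)r^j\bigr)\cdot\omega'\Bigr)\omega=(-1)^{|\omega||\omega'|}\omega'\omega,
\]
because $\sum_j\epsilon(r_j)r^j=(\mathrm{id}\ox\epsilon)((R_D)_{21})=1_D$ by~\eqref{eq:relations}. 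Hence $\omega$ graded-commutes with every element of $HH^\bullet(A,B)$, i.e.\ $\omega\in\mathrm{Center}(HH^\bullet(A,B))$; since $HH^\bullet(B)=HH^\bullet(A,B)^E$ as algebras, this is precisely the asserted algebra inclusion.

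The one point requiring care is the bookkeeping in the second paragraph: one must check that the $E$-invariance produced by the collapsed spectral sequence is invariance under exactly the tensor leg of $R_D$ that acts on $\omega$ in~\eqref{eq:874}. This works because the module part of the Yetter--Drinfeld structure on $HH^\bullet(A,B)$ is the subalgebra $E^{op}\subset D$, while $R_D\in E^{op}\ox E^\ast$, so the first leg of $R_D$ lands inside precisely the part of $D$ fixing $\omega$; everything else is formal. One can also bypass the double entirely and argue in the Yetter--Drinfeld picture: braided commutativity reads $\omega\omega'=(-1)^{|\omega||\omega'|}\omega'_0\,(\omega\cdot\omega'_1)$ for the $E$-coaction $\omega'\mapsto\omega'_0\ox\omega'_1$, and invariance of $\omega$ under the right $E$-action collapses the right-hand side to $(-1)^{|\omega||\omega'|}\omega'_0\,\epsilon(\omega'_1)\,\omega=(-1)^{|\omega||\omega'|}\omega'\omega$ by the counit axiom.
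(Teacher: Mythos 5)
Your proposal is correct and follows essentially the same route as the paper: identify $HH^\bullet(B)$ with the $E$-invariants of $HH^\bullet(A,B)=H^\bullet_{c,E}(B)$, then apply braided commutativity together with the relation $(\mathrm{id}\ox\epsilon)(R_{21})=1\ox 1$ to see that invariant classes graded-commute with everything. The extra care you take with the spectral-sequence collapse and with which leg of the $R$-matrix acts on the invariant factor is sound but matches what the paper does implicitly.
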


\begin{proof}
Recall $HH^\bullet(B)=HH^\bullet(A,B)^E$.  Since $HH^\bullet(A,B)$ is braided commutative, we have $\bar{f}\cup \bar{g}=\pm (r^j\cdot \bar{g})\cup (r_j\cdot \bar{f})$ for any classes $\bar{f},\bar{g}\in HH^\bullet(A,B)$.  Thus, when $\bar{f}$ is invariant we have
\[
\bar{f}\cup \bar{g}=\pm(r^j\cdot \bar{g})\cup (r_j\cdot \bar{f})=\pm(r^j\cdot \bar{g})\cup (\epsilon(r_j)\bar{f})=\pm\bar{g}\cup \bar{f}.
\]
So invariant functions are central and we get the proposed embedding, since $HH^\bullet(B)$ is identified with the invariants in $HH^\bullet(A,B)=HH^\bullet_c(B)$.
\end{proof}

In general, we don't expect $HH^\bullet(B)$ to be the entire center.  For example, in the case of a finite group $G$ acting faithfully by linear automorphisms on the polynomials ring $S(V)$ generated by a vector space $V$, the cohomology $HH^\bullet(S(V),S(V)\ast G)$ in degree $0$ is just $S(V)$.  One can then check that all the elements in $HH^0(S(V),S(V)\ast G)=S(V)$ are already central in $HH^\bullet(S(V),S(V)\ast G)$, despite the fact that $S(V)^G$ is not equal to $S(V)$.  One can also see this from Proposition \ref{prop:whatever!} below.

\section{Consequences for smash products with group algebras}
\label{sect:consequences}

Fix a finite group $G$ acting on an algebra $A$.  We consider only the smash product $A\ast G$, although one could just as easily consider a crossed product $A\ast_\alpha G$, or $J$-twist, according to Proposition \ref{prop:w_cocycle}.  As has been our general convention, we let $G^{op}\subset D$ act on the left of Yetter-Drinfeld modules (opposed to $G$ acting on the right) as described in \ref{sect:smashprud}.
\par

Recall that we have the $A$-bimodule decomposition $A\ast G=\oplus_{g\in G} Ag$.  This gives a canonical decomposition
\[
H^\bullet_{c,kG}(A\ast G)=HH^\bullet(A,A\ast G)=\bigoplus_{g\in G} HH^\bullet(A,Ag),
\]
which, as we've mentioned before, expresses the canonical $G$-grading ($G$-coaction) on the dg Yetter-Drinfeld module $H^\bullet_{c,kG}(A\ast G)$.  Note that the usual Hochschild cohomology $HH^\bullet(A,A)= HH^\bullet(A,Ae)$ sits inside the cohomology $HH^\bullet(A,A\ast G)$, and hence $H^\bullet_{c,kG}(A\ast G)$, as the graded subalgebra of coinvariants.  The cohomology $HH^\bullet(A)$ then acts naturally on each component $HH^\bullet(A,Ag)$ by the cup product.

\begin{proposition}\label{prop:whatever!}
Suppose $G$ is a finite group and, for each $g\in G$, let $I_g$ be the ideal in $HH^\bullet(A)$ generated by all the classes $(1-g)\cdot Y$, with $Y\in HH^\bullet(A)$.  Each $HH^\bullet(A)$-module $HH^\bullet(A,Ag)$ is annihilated by $I_g$, both on the left and the right.  Furthermore, each $HH^\bullet(A,Ag)$ is a {\it central} bimodule over $HH^\bullet(A)/I_g$ and $HH^\bullet(A)$.
\end{proposition}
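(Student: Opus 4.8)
The plan is to deduce both assertions from the braided commutativity of $HH^\bullet(A,A\ast G)=H^\bullet_{c,kG}(A\ast G)$ in $\mathrm{dg}\,YD^G_G$ (Corollary~\ref{cor:HH_comm}), after making the braiding of $YD^G_G=D\mathrm{mod}$ fully explicit. Recall that the $R$-matrix of the double $D=D(kG)$ is $\sum_{w\in G}w\ox w^\ast$, so in the braided commutativity relation (\ref{eq:874}) the summation $r_j\ox r^j$ runs over the pairs $w\ox w^\ast$ with $w\in kG^{op}\subset D$ and $w^\ast\in(kG)^\ast\subset D$. Under the $G$-grading on $HH^\bullet(A,A\ast G)$, a class $\beta\in HH^\bullet(A,Ag)$ is precisely a homogeneous class whose $kG$-coaction is $\beta\mapsto\beta\ox g$; hence the action of $w^\ast\in(kG)^\ast$ on a class of $G$-degree $h$ is multiplication by the scalar $\delta_{w,h}$, while the action of $w\in kG^{op}$ restricts on the subalgebra $HH^\bullet(A)=HH^\bullet(A,Ae)$ to the algebra automorphism denoted by the dot $\cdot$ in the statement (the action of \cite{stefan,N2}), with $e\in G$ acting as the identity.

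First I would unwind (\ref{eq:874}) for a pair consisting of $Y\in HH^\bullet(A)$ (coaction degree $e$) and $\beta\in HH^\bullet(A,Ag)$ (coaction degree $g$). When $Y$ occupies the slot acted on by $r^j$ only the term $w=e$ survives the sum, and since $e$ acts trivially this gives the symmetry
\[
\beta\cup Y=(-1)^{|Y||\beta|}\,Y\cup\beta ,
\]
so the left and right $HH^\bullet(A)$-actions on $HH^\bullet(A,Ag)$ coincide up to the Koszul sign; that is, $HH^\bullet(A,Ag)$ is a central (symmetric) $HH^\bullet(A)$-bimodule. When instead $\beta$ occupies the $r^j$-slot only $w=g$ survives, and (\ref{eq:874}) reads $Y\cup\beta=(-1)^{|Y||\beta|}\,\beta\cup(g\cdot Y)$. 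Feeding the degree-$e$ class $g\cdot Y$ into the symmetry identity yields $\beta\cup(g\cdot Y)=(-1)^{|Y||\beta|}\,(g\cdot Y)\cup\beta$, and combining the two equalities gives
\[
Y\cup\beta=(g\cdot Y)\cup\beta,\qquad\text{so}\qquad\big((1-g)\cdot Y\big)\cup\beta=0 .
\]

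Next I would propagate this vanishing over the whole ideal. Since the cup product respects the $G$-grading and $e\cdot g=g$, the component $HH^\bullet(A,Ag)$ is a sub-bimodule over $HH^\bullet(A)$; hence for $z=\sum_i a_i\cup\big((1-g)\cdot Y_i\big)\cup b_i\in I_g$ and $\beta\in HH^\bullet(A,Ag)$ one has $b_i\cup\beta\in HH^\bullet(A,Ag)$, and therefore $z\cup\beta=\sum_i a_i\cup\big((1-g)\cdot Y_i\big)\cup(b_i\cup\beta)=0$ by the previous step. So $I_g$ annihilates $HH^\bullet(A,Ag)$ on the left; by the symmetry identity it annihilates on the right as well, and both actions consequently factor through $HH^\bullet(A)/I_g$, on which $HH^\bullet(A,Ag)$ remains a central bimodule. (As a two-sided ideal $\langle(1-g)\cdot Y\rangle$ is insensitive to replacing $g$ by $g^{-1}$ among the generators, so any residual ambiguity in the precise form of the automorphism $\cdot$ is immaterial here.)

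The only step where the braided structure does genuine work is the translation of the abstract identity (\ref{eq:874}), valid for an arbitrary quasitriangular $H$, into the concrete statement that $w^\ast\in(kG)^\ast$ acts on a $G$-homogeneous class of degree $h$ as the scalar $\delta_{w,h}$ — this is exactly the point at which the $R$-matrix of $D(kG)$ collapses the sum over $w$ to a single term and produces the conjugation automorphism in $r_j\cdot Y$. Everything after that is Koszul-sign bookkeeping together with the formal ideal argument above, and I anticipate no further obstacle.
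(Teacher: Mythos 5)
Your proposal is correct and follows essentially the same route as the paper: both arguments unwind the braided commutativity of $HH^\bullet(A,A\ast G)$ in $YD^G_G$ twice — once with the degree-$e$ class in the coacting slot to get the symmetry $Y\cup\beta=\pm\beta\cup Y$, and once with the degree-$g$ class there to get $Y\cup\beta=(g\cdot Y)\cup\beta$, hence $\big((1-g)\cdot Y\big)\cup\beta=0$ — and then observe that centrality over $HH^\bullet(A)/I_g$ follows since $g$ acts trivially on that quotient. Your version simply spells out the $R$-matrix of $D(kG)$ and the ideal-propagation step more explicitly than the paper does.
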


\begin{proof}
For any $Y_e\in HH^\bullet(A)$ and $Y_g\in HH^\bullet(A,Ag)$ braided commutativity gives
\[
Y_eY_g=Y_g(Y_e\cdot g)=\big(g\cdot Y_e\big)Y_g\Rightarrow \big((1-g)\cdot Y_e\big)Y_g=0.
\]
Hence the cohomology is annihilated by each $(1-g)\cdot Y_e$ and therefore also the ideal $I_g$.  A similar computation holds on the right as well.  The fact that $HH^\bullet(A)/I_g$ acts centrally on $HH^\bullet(A,Ag)$ follows from the fact that the induced $g$-action on the quotient $HH^\bullet(A)/I_g$ is trivial.  Now we see that $HH^\bullet(A)$ also acts centrally, as the right and left actions factor through the quotient $HH^\bullet(A)/I_g$.
\end{proof}

The ideals $I_g$ will be quite large in general.  In Section \ref{sect:ideals} we explain what work these ideals do in a geometric setting.
\par

Although we leave our investigation here, we remark that braided commutativity will also produce certain generic relations for the Hochschild cohomology of other smash products $A\ast E$, e.g. when $E$ is a restricted enveloping algebra.  These relations are not quite as straightforward to analyze as in the case of a group ring.

\subsection{Understanding the ideals $I_g$ from a geometric perspective}\label{sect:ideals}

Suppose $\mathrm{char}(k)=0$ and let $X$ be a smooth affine scheme over $k$.  Fix also a finite group $G$ acting on $X$.  We let $k[X]$ denote the algebra of global functions on $X$.  In this case the HKR theorem tells us that
$$
HH^\bullet(k[X])=\Wedge^\bullet_{k[X]}T_X,
$$
where $T_X$ is the module of global vector fields on $X$ (or, derivations on $k[X]$).  Take $X^g$ to be the fixed subscheme for a given $g\in G$.
\par

In this case each ideal $I_g$ will be generated in degrees $0$ and $1$.  We will have
$$
HH^0(k[X])/I_g^0=k[X]/I_g^0=k[X]/(f-{^gf})_{f\in k[X]}=k[X^g],
$$
and for the whole cohomology there is a canonical algebra identification
$$
HH^\bullet(k[X])/I_g=\wedge^\bullet_{k[X^g]}(T_X)|X^g/(Y-g\cdot Y)_{|Y|=1}=\Wedge^\bullet_{k[X^g]}T_{X^g}.
$$
Proposition \ref{prop:whatever!} then tells us that the algebra of polyvector fields for each fixed space $X^g$ will act naturally on the cohomology $HH^\bullet(k[X],k[X]g)$.  Each of these actions is defined canonically and globally.
\par

In the case in which $X=\mathbb{A}^n_k$ and $G$ acts by linear automorphisms we understand that
\begin{enumerate}
\item[(a)] the ideal $I_g$ is the entire annihilator of each $HH^\bullet(k[X],k[X]g)$ and
\item[(b)] each cohomology $HH^{\mathrm{codim}X^g}(k[X])$ is a free rank $1$ $k[X^g]$-module, let's call it $L_g$, and the restriction of the action map
$$
\bigoplus_{g\in G} \Wedge^\bullet_{k[X^g]}T_{X^g}\ox_{k[X^g]}L_g\to HH^\bullet(k[X],k[X]\ast G)=H^\bullet_c(k[X]\ast G)
$$
is a $HH^\bullet(k[X])$-bimodule isomorphism.
\end{enumerate}
This follows by the careful analysis of~\cite{SW}, for example.  Both (a) and (b) are also known to hold when we replace $k[X]$ with smooth functions on a manifold ~\cite{neumaieretal06,halbouttang10}.  In the case $X=\mathbb{A}^n_k$ the $L_g$ form their own subalgebra in the cohomology $k[X][L_g:g\in G]$ (basically the volume subalgebra in~\cite{SW}).  This subalgebra, along with the $HH^\bullet(A)$-action, which we've described in general, give the entire algebra structure on $HH^\bullet(k[X],k[X]*G)=H_c^\bullet(k[X]\ast G)$.  One then takes invariance to get the cup product on the usual Hochschild cohomology $HH^\bullet(k[X]\ast G)$.
\par

It seems that the above statements will hold for an arbitrary smooth affine $G$-scheme $X$, where we propose now that the $L_g$ are locally free.  However, the details have yet to emerge in the literature.

\section*{Acknowledgements}

Thanks to Travis Schedler and Sarah Witherspoon for agreeing to share their unpublished notes~\cite{schedlerwitherspoon}, and for helpful conversations in general.  Thanks also to Richard Ng, who has had an influence on my thinking about braided categories.

\bibliographystyle{abbrv}

\begin{thebibliography}{10}

\bibitem{baez94}
J.~C. Baez.
\newblock {H}ochschild homology in a braided tensor category.
\newblock {\em Transactions of the American Mathematical Society},
  344(2):885--906, 1994.

\bibitem{baranovsky03}
V.~Baranovsky.
\newblock Orbifold cohomology as periodic cyclic homology.
\newblock {\em International Journal of Mathematics}, 14(08):791--812, 2003.

\bibitem{brodzkietal15}
J.~Brodzki, S.~Dave, and V.~Nistor.
\newblock The periodic cyclic homology of crossed products of finite type
  algebras.
\newblock {\em preprint
  \href{http://arxiv.org/abs/1509.03662}{arXiv:1509.03662}}, 2015.

\bibitem{caldararuetal}
A.~C{\u{a}}ld{\u{a}}raru, A.~Giaquinto, and S.~Witherspoon.
\newblock Algebraic deformations arising from orbifolds with discrete torsion.
\newblock {\em Journal of Pure and Applied Algebra}, 187(1):51--70, 2004.

\bibitem{dolgushevetingof05}
V.~Dolgushev and P.~Etingof.
\newblock Hochschild cohomology of quantized symplectic orbifolds and the
  {C}hen-{R}uan cohomology.
\newblock {\em International Mathematics Research Notices},
  2005(27):1657--1688, 2005.

\bibitem{dolgushev15}
V.~A. Dolgushev and C.~L. Rogers.
\newblock A version of the {G}oldman--{M}illson theorem for filtered
  $l_\infty$-algebras.
\newblock {\em Journal of Algebra}, 430:260--302, 2015.

\bibitem{EGNO}
P.~Etingof, S.~Gelaki, D.~Nikshych, and V.~Ostrik.
\newblock {\em Tensor Categories}, volume 205.
\newblock American Mathematical Society, 2015.

\bibitem{fregier09}
Y.~Fr{\'e}gier, M.~Markl, and D.~Yau.
\newblock The l∞-deformation complex of diagrams of algebras.
\newblock {\em New York Journal of Mathematics}, 15:353--392, 2009.

\bibitem{gerstenhaber63}
M.~Gerstenhaber.
\newblock The cohomology structure of an associative ring.
\newblock {\em Annals of Mathematics}, pages 267--288, 1963.

\bibitem{gerstenhaberschack86}
M.~Gerstenhaber and S.~D. Schack.
\newblock Relative hochschild cohomology, rigid algebras, and the bockstein.
\newblock {\em Journal of Pure and Applied Algebra}, 43(1):53--74, 1986.

\bibitem{Gi}
V.~Ginzburg.
\newblock Lectures on noncommutative geometry.
\newblock {\em online
  \href{http://arxiv.org/abs/math/0506603}{arXiv:math/0506603}}, 2005.

\bibitem{GuGu}
J.~Guccione and J.~Guccione.
\newblock {H}ochschild (co) homology of {H}opf crossed products.
\newblock {\em K-theory}, 25(2):139--169, 2002.

\bibitem{guichardet01}
A.~Guichardet.
\newblock Suites spectrales {\`a} la {H}ochschild--{S}erre pour les produits
  crois{\'e}s d'alg{\`e}bres et de groupes.
\newblock {\em Journal of Algebra}, 235(2):744--765, 2001.

\bibitem{halbouttang10}
G.~Halbout and X.~Tang.
\newblock Noncommutative poisson structures on orbifolds.
\newblock {\em Transactions of the American Mathematical Society},
  362(5):2249--2277, 2010.

\bibitem{kaygun}
A.~Kaygun.
\newblock {H}opf-{H}ochschild (co) homology of module algebras.
\newblock {\em Homology, Homotopy and Applications}, 9(2):451--472, 2007.

\bibitem{KSdeformation}
M.~Kontsevich and Y.~Soibelman.
\newblock {\em Deformation theory}.
\newblock Livre en pr{\'e}paration.

\bibitem{LV}
J.-L. Loday and B.~Vallette.
\newblock {\em Algebraic operads}, volume 346 of {\em Grundlehren der
  Mathematischen Wissenschaften [Fundamental Principles of Mathematical
  Sciences]}.
\newblock Springer, Heidelberg, 2012.

\bibitem{lurie11}
J.~Lurie.
\newblock Derived algebraic geometry x: Formal moduli problems.
\newblock {\em preprint \href{math.harvard.edu/~
  lurie/papers/DAG-X.pdf}{math.harvard.edu/~ lurie/papers/DAG-X.pdf}}.

\bibitem{manetti05}
M.~Manetti.
\newblock Deformation theory via differential graded lie algebras.
\newblock {\em online
  \href{http://arxiv.org/abs/math/0507284}{arXiv:math/0507284}}, 2005.

\bibitem{mastnaketal10}
M.~Mastnak, J.~Pevtsova, P.~Schauenburg, and S.~Witherspoon.
\newblock Cohomology of finite-dimensional pointed {H}opf algebras.
\newblock {\em Proceedings of the London Mathematical Society},
  100(3):377--404, 2010.

\bibitem{M}
S.~Montgomery.
\newblock {\em Hopf algebras and their actions on rings}, volume~82 of {\em
  CBMS Regional Conference Series in Mathematics}.
\newblock Published for the Conference Board of the Mathematical Sciences,
  Washington, DC, 1993.

\bibitem{montgomery04}
S.~Montgomery.
\newblock Algebra properties invariant under twisting.
\newblock {\em Hopf Algebras in Noncommutative Geometry and Physics. Lecture
  Notes in Pure and Appl. Math}, 239:229--243, 2004.

\bibitem{N}
C.~Negron.
\newblock The cup product on {H}ochschild cohomology via twisting cochains and
  applications to {K}oszul rings.
\newblock {\em preprint
  \href{http://arxiv.org/abs/1304.0527}{arXiv:1304.0527}}.

\bibitem{negronthesis}
C.~Negron.
\newblock {\em Alternate Approaches to the Cup Product and Gerstenhaber Bracket
  on Hochschild Cohomology}.
\newblock PhD thesis, University of Washington, 2015.

\bibitem{N2}
C.~Negron.
\newblock Spectral sequences for the cohomology rings of a smash product.
\newblock {\em Journal of Algebra}, 433:73--106, 2015.

\bibitem{neumaieretal06}
N.~Neumaier, M.~J. Pflaum, H.~Posthuma, and X.~Tang.
\newblock Homology of formal deformations of proper {\'e}tale lie groupoids.
\newblock {\em Journal fur die reine und angewandte Mathematik (Crelles
  Journal)}, 2006(593):117--168, 2006.

\bibitem{pflaumetal11}
M.~Pflaum, H.~Posthuma, X.~Tang, and H.-H. Tseng.
\newblock Orbifold cup products and ring structures on hochschild cohomologies.
\newblock {\em Communications in Contemporary Mathematics}, 13(01):123--182,
  2011.

\bibitem{schedlerwitherspoon}
T.~Schedler and S.~Witherspoon.
\newblock unpublished notes.

\bibitem{SW}
A.~V. Shepler and S.~Witherspoon.
\newblock Finite groups acting linearly: {H}ochschild cohomology and the cup
  product.
\newblock {\em Advances in Mathematics}, 226(4):2884--2910, 2011.

\bibitem{SW2}
A.~V. Shepler and S.~Witherspoon.
\newblock Group actions on algebras and the graded {L}ie structure of
  {H}ochschild cohomology.
\newblock {\em Journal of Algebra}, 351:350--381, 2012.

\bibitem{stasheff}
J.~Stasheff.
\newblock The intrinsic bracket on the deformation complex of an associative
  algebra.
\newblock {\em Journal of pure and applied Algebra}, 89(1):231--235, 1993.

\bibitem{stefan}
D.~Stefan.
\newblock {H}ochschild cohomology on {H}opf {G}alois extensions.
\newblock {\em Journal of Pure and Applied Algebra}, 103(2):221--233, 1995.

\bibitem{yau08}
D.~Yau.
\newblock Deformation bicomplex of module algebras.
\newblock {\em Homology, Homotopy and Applications}, 10(1):97--128, 2008.

\end{thebibliography}

\def\cprime{$'$}

\end{document}